\newtheorem{theorem}{Theorem}    
\newtheorem{proposition}[theorem]{Proposition}
\newtheorem{lemma}[theorem]{Lemma}
\newtheorem{remark}[theorem]{Remark}
\newtheorem{definition}[theorem]{Definition}
\theoremstyle{definition}
\numberwithin{theorem}{section} \numberwithin{theorem}{section}
\numberwithin{equation}{section}
\def\esss{\operatornamewithlimits{ess\,sup}}
\begin{document}
\title[Matrix weighted inequalities for fractional type integrals]
{Matrix weighted inequalities for fractional type integrals associated to operators with new classes of weights}

\author{Yongming Wen$^{\ast}$ and Huoxiong Wu}

\subjclass[2020]{
42B20; 42B25; 35J10.
}

%
\keywords{differential operator, fractional integrals, fractional maximal operators, quantitative weighted bounds, bump condition, matrix weight.}
\thanks{$^*$Corresponding author.}
\thanks{Yongming Wen is supported by National Natural Science Foundation of China (Grant no. 12301119), Fujian Provincial Natural Science Foundation of China(Nos. 2021J05188), President's fund of Minnan Normal University (No. KJ2020020), Institute of Meteorological Big Data-Digital Fujian, Fujian Key Laboratory of Data Science and Statistics and Fujian Key Laboratory of Granular Computing and Applications (Minnan Normal University), China. Huoxiong Wu is supported by National Natural Science Foundation of China (Grant nos. 12171399, 12271041).}
\address{School of Mathematics and Statistics, Minnan Normal University, Zhangzhou 363000, China} \email{wenyongmingxmu@163.com}
\address{School of Mathematical Sciences, Xiamen University, Xiamen 361005, China} \email{huoxwu@xmu.edu.cn}



\begin{abstract}
Let $e^{-tL}$ be a analytic semigroup generated by $-L$, where $L$ is a non-negative self-adjoint operator on $L^2(\mathbb{R}^d)$. Assume that the kernels of $e^{-tL}$, denoted by $p_t(x,y)$, only satisfy the upper bound: for all $N>0$, there are constants $c,C>0$ such that
\begin{align}\label{upper bound}
|p_t(x,y)|\leq\frac{C}{t^{d/2}}e^{-\frac{|x-y|^2}{ct}}\Big(1+\frac{\sqrt{t}}{\rho(x)}+
\frac{\sqrt{t}}{\rho(y)}\Big)^{-N}
\end{align}
holds for all $x,y\in\mathbb{R}^d$ and $t>0$. We first establish the quantitative matrix weighted inequalities for fractional type integrals associated to $L$ with new classes of matrix weights, which are nontrivial extension of the results established by Li, Rahm and Wick \cite{LRW}. Next, we give new two-weight bump conditions with Young functions satisfying wider conditions for fractional type integrals associated to $L$, which cover the result obtained by Cruz-Uribe, Isralowitz and Moen \cite{CrIsMo}. We point out that the new classes of matrix weights and bump conditions are larger and weaker than the classical ones given in \cite{IM} and \cite{CrIsMo}, respectively. As applications, our results can be applied to settings of magnetic Schr\"{o}dinger operator, Laguerre operators, etc.
\end{abstract}

\maketitle

\section{Introduction and main results}
\subsection{Background}

In recent years, the problem of quantitative weighted estimates for singular integrals and related operators has appealed the attention of many mathematicians. Buckley \cite{Buckley} first studied sharp weighted inequalities for Hardy-Littlewood maximal function $M$ as follows:
\begin{align*}
\|Mf\|_{L^p(\omega)}\lesssim[\omega]_{A_p}^{1/(p-1)}\|f\|_{L^p(\omega)},~1<p<\infty,
\end{align*}
where the $A_p$ constant $[\omega]_{A_p}$ is defined by
\begin{equation}\label{Ap}
[\omega]_{A_p}:=\sup_Q\Big(\frac{1}{|Q|}\int_Q\omega(x)dx\Big)
\Big(\frac{1}{|Q|}\int_Q\omega(x)^{1-p'}dx\Big)^{p-1}<\infty.
\end{equation}
Later, Petermichl \cite{P,P1} considered the sharp $A_p$ estimates for Hilbert and Riesz transforms. While for general Calder\'{o}n-Zygmund operators $T$, Hyt\"{o}nen \cite{Hy} proved the dyadic representation theorem for $T$, which leads to the full picture of the sharp $A_p$ bounds for $T$. Subsequently, Lacey, Moen, P\'{e}rez and Torres \cite{LaMPT} established the following sharp weighted estimates for the classical fractional integral operator $I_\alpha$ and fractional maximal operator $M_\alpha$ in terms of the $A_{p,q}$ constant.
\begin{theorem}{\rm(\cite{LaMPT})}\label{quantitative weighted for classical fractional}
Suppose that $0<\alpha<d$, $1<p<d/\alpha$ and $q$ satisfies $1/q=1/p-\alpha/d$. Then
\begin{align*}
\|I_\alpha f\|_{L^q(\omega)}\lesssim[\omega]_{A_{p,q}}^{(1-\alpha/d)\max\{1,p'/q\}}
\|f\|_{L^p(\omega^{p/q})}
\end{align*}
and
\begin{align*}
\|M_\alpha f\|_{L^{q}(\omega)}\lesssim[\omega]_{A_{p,q}}^{(1-\alpha/d)p'/q}
\|f\|_{L^p(\omega^{p/q})}.
\end{align*}
Furthermore, both of two results above are sharp, where
$$I_\alpha f(x):=\int_{\mathbb{R}^d}\frac{f(y)}{|x-y|^{n-\alpha}}dy,~M_\alpha f(x):=\sup_{Q\ni x}\frac{1}{|Q|^{1-\alpha/d}}\int_Q|f(y)|dy$$
and the $A_{p,q}$ constant $[\omega]_{A_{p,q}}$ is defined by
$$[\omega]_{A_{p,q}}:=\sup_{Q}\Big(\frac{1}{|Q|}\int_Q\omega(x)dx\Big)
\Big(\frac{1}{|Q|}\int_Q\omega(x)^{-p'/q}dx\Big)^{q/p'}<\infty.$$
\end{theorem}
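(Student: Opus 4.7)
The plan is to reduce each operator to a positive dyadic (sparse) model and then establish the sharp weighted bound for the model. For $M_\alpha$, a standard Calder\'{o}n--Zygmund stopping-time argument produces a sparse family $\mathcal{S}$ with
\begin{equation*}
M_\alpha f(x) \lesssim \sup_{Q \in \mathcal{S},\ Q \ni x} |Q|^{\alpha/d} \langle |f| \rangle_Q.
\end{equation*}
For $I_\alpha$, the analogous pointwise sparse bound
\begin{equation*}
|I_\alpha f(x)| \lesssim \sum_{Q \in \mathcal{S}} |Q|^{\alpha/d} \langle |f| \rangle_Q \mathbf{1}_Q(x)
\end{equation*}
can be obtained either from Hyt\"{o}nen's dyadic representation theorem by averaging over shifted grids, or more directly by Lerner's local oscillation / sparse-bound technology adapted to the fractional kernel.

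The second step is to prove the weighted estimate for the sparse operator $\mathcal{A}_\alpha^{\mathcal{S}} f := \sum_{Q\in\mathcal{S}} |Q|^{\alpha/d} \langle f \rangle_Q \mathbf{1}_Q$. Writing $\sigma := \omega^{-p'/q}$ so that the $A_{p,q}$ condition becomes a balanced two-weight bump, I would dualize and test the resulting bilinear form on indicators of cubes in $\mathcal{S}$. The dichotomy $\max\{1, p'/q\}$ is structural: when $p' \leq q$, a single-sided Carleson embedding following a parallel corona decomposition yields exactly $[\omega]_{A_{p,q}}^{1-\alpha/d}$; when $p' > q$, the dual pairing forces an $\ell^{p'/q}$-summation over principal cubes, producing the extra factor $p'/q$ in the exponent. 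For $M_\alpha$ only one side of the pairing is nontrivial, so a direct Carleson-embedding argument uniformly yields the exponent $(1-\alpha/d)p'/q$ without a case split.

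Sharpness I would verify by the classical power-weight extremizers: taking $\omega(x)=|x|^\gamma$ with $\gamma$ approaching the endpoint of the admissible $A_{p,q}$ range and testing the operators on truncations of $|x|^{-d/p}$, one obtains matching lower bounds on the norms that blow up at precisely the predicted rate of $[\omega]_{A_{p,q}}$, ruling out any improvement of the exponents. The main obstacle I anticipate is the $p' > q$ regime, where the naive dual pairing only delivers $[\omega]_{A_{p,q}}^{1-\alpha/d}$; extracting the additional factor $p'/q$ demands a careful parallel stopping-time decomposition of $\mathcal{S}$ together with a precise geometric summation of the Carleson masses along stopping generations, and this is the technical core of the argument.
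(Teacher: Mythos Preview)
This theorem is not proved in the paper at all: it is quoted verbatim from \cite{LaMPT} as background (note the attribution in the theorem header), and the paper only uses it to motivate the later matrix-weighted analogues. So there is no ``paper's own proof'' to compare your proposal against.

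That said, your outline is a legitimate route to the result, though it is not the one taken in \cite{LaMPT}. The original Lacey--Moen--P\'erez--Torres argument predates the sparse-domination machinery you invoke; it proceeds via a sharp weak-type bound at the endpoint together with a sharp extrapolation theorem (for $I_\alpha$), and a direct stopping-time/Carleson argument (for $M_\alpha$). Your sparse approach is the now-standard modern alternative and does work, but one comment: in your final paragraph you describe the $p'>q$ case as requiring you to ``extract an additional factor $p'/q$'' beyond what the naive pairing gives. This is backwards---a larger exponent is a \emph{weaker} bound, so the difficulty is not in producing the factor $p'/q$ but in avoiding any further loss. The genuine case split comes from duality: the operator $I_\alpha$ is essentially self-dual, and the sharp exponent is obtained by running the sparse Carleson embedding on whichever side ($\omega$ or $\omega^{-p'/q}$) has the smaller exponent, which is what produces the $\max\{1,p'/q\}$. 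Your sharpness argument via power weights $|x|^{\gamma}$ is exactly the one used in \cite{LaMPT}.
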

Inspired by the above work, Li, Rahm and Wick \cite{LRW} first investigated the quantitative weighted estimates of fractional integral $L^{-{\alpha}/{2}}$ and fractional maximal operator $M_\alpha^{\rho,\theta}$ in the Schr\"{o}dinger setting, where $L^{-{\alpha}/{2}}$ and $M_\alpha^{\rho,\theta}$ are defined as follows.
\begin{align}\label{fractional integral associated with operators}
L^{-{\alpha}/{2}}f(x):=\int_0^\infty e^{-tL}f(x)t^{\frac{\alpha}{2}-1}dt,~0<\alpha<d,
\end{align}
\begin{align*}
M_\alpha^{\rho,\theta}f(x):=\sup_{Q\ni x}\frac{1}{(\psi_\theta(Q)|Q|)^{1-\frac{\alpha}{d}}}\int_Q|f(y)|dy,~\theta\geq0,
\end{align*}
where $\psi_\theta(Q):=(1+l_Q/\rho(x_Q))^\theta$, $\rho$ is the critical radius function (see Section 2 for a precise definition), and $x_Q$, $l_Q$ are the center of cube $Q$ and the side-length of $Q$, respectively. The main results of Li, Rahm and Wick \cite{LRW} are stated as follows.
\begin{theorem}{\rm(\cite{LRW})}\label{quantitative weighted estimates for schrodinger fractional}
Suppose that $0<\alpha<d$. Let $1\leq p<d/\alpha$ and $q$ satisfy $1/q=1/p-\alpha/d$. Let $\theta\geq0$, $\gamma=\theta/\big(1+p'/q\big)$ and $K$ be defined by the equation $(1/K+q/(Kp'))(1-\alpha/d)\max\{1,p'/q\}=1/2$. Then
\begin{align*}
\|L^{-{\alpha}/{2}} f\|_{L^q(\omega)}\lesssim[\omega]_{A_{p,q}^{\rho,\theta/(3K)}}
^{(1-\alpha/d)\max\{1,p'/q\}}
\|f\|_{L^p(\omega^{p/q})},~\omega\in A_{p,q}^{\rho,\theta/(3K)},
\end{align*}
and
\begin{align*}
\|M_\alpha^{\rho,\theta} f\|_{L^{q}(\omega)}\lesssim[\omega]_{A_{p,q}^{\rho,\gamma/3}}^
{(1-\alpha/d)p'/q}
\|f\|_{L^p(\omega^{p/q})},~\omega\in A_{p,q}^{\rho,\gamma/3},
\end{align*}
where $A_{p,q}^{\rho,\theta}$ is the class of weights $\omega$ such that
\begin{equation*}
[\omega]_{A_{p,q}^{\rho,\theta}}:=\sup_Q\Big(\frac{1}{\psi_\theta(Q)|Q|}\int_Q\omega(x)dx\Big)
\Big(\frac{1}{\psi_\theta(Q)|Q|}\int_Q\omega(x)^{-p'/q}dx\Big)^{q/p'}<\infty.
\end{equation*}
\end{theorem}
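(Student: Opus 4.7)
The plan is to prove Theorem \ref{quantitative weighted estimates for schrodinger fractional} via a sparse domination adapted to the critical radius function $\rho$, following the strategy used for Theorem \ref{quantitative weighted for classical fractional} but with the dyadic averages reweighted by the factor $\psi_\theta(Q)$ so as to absorb the extra decay in the heat kernel \eqref{upper bound}. The argument breaks naturally into two stages: a sparse domination step, and a two-weight sparse-to-weighted reduction.

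For $M_\alpha^{\rho,\theta}$, a Calder\'on--Zygmund stopping-time selection on a shifted dyadic grid should yield a sparse family $\mathcal{S}$ such that
\begin{align*}
M_\alpha^{\rho,\theta}f(x)\lesssim \sum_{Q\in\mathcal{S}}\frac{1}{(\psi_\theta(Q)|Q|)^{1-\alpha/d}}\int_Q|f(y)|\,dy\cdot\mathbf{1}_Q(x),
\end{align*}
with the stopping condition phrased in terms of $\psi_\theta$-reweighted averages. The analogous bilinear sparse form for $L^{-\alpha/2}$ is obtained by splitting the time integral in \eqref{fractional integral associated with operators} at the scale $t\sim l_Q^2$. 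In the local regime $l_Q\leq\rho(x_Q)$ one argues as in the Euclidean Calder\'on--Zygmund case, handling the short-time piece by standard kernel bounds and the long-time piece by the Gaussian factor in \eqref{upper bound}. In the global regime $l_Q>\rho(x_Q)$, the extra factor $\bigl(1+\sqrt{t}/\rho\bigr)^{-N}$ supplies geometric decay in the number of $\rho$-scale cubes separating $x$ and $y$, which after summation can be absorbed into the $\psi_\theta$-reweighted averages. The resulting sparse bound is then converted to a weighted estimate by the standard two-weight testing argument for sparse operators \`a la Lacey--Moen--P\'erez--Torres, replacing $|Q|^{\alpha/d}$ by $(\psi_\theta(Q)|Q|)^{\alpha/d}$ throughout.

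The main obstacle is the calibration of the exponent $K$. Each summation over global scales costs a factor of the form $\psi_\theta(Q)^{1/K}$ at the sparse level, and forcing this geometric series to converge while simultaneously matching the sharp $A_{p,q}$-exponent $(1-\alpha/d)\max\{1,p'/q\}$ leads precisely to the balance equation $(1/K+q/(Kp'))(1-\alpha/d)\max\{1,p'/q\}=1/2$ appearing in the statement. The constant $3$ in the denominator of $\theta/(3K)$ (respectively $\gamma/3$) is the usual price of a three-grid decomposition that replaces arbitrary cubes by dyadic ones while preserving $\psi_\theta$-control. Once $K$ and the grid shifts are chosen correctly, the remaining steps are routine, but keeping the $\psi_\theta$-bookkeeping consistent with both the pointwise hypothesis \eqref{upper bound} and the $\rho$-regularity built into the $A_{p,q}^{\rho,\theta}$ class is where the argument becomes delicate, and is the reason the weight class must be taken with a strictly smaller parameter than that of the operator.
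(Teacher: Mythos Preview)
This theorem is quoted from \cite{LRW} and is not proved in the present paper; the paper does, however, prove matrix-weighted extensions (Theorems~\ref{quantitative matrix weighted estimates for fractional integrals associated with operators} and~\ref{quantitative matrix weighted estimates for fractional maximal operator associated with operators}) whose scalar specialization recovers the statement, so one can compare against those arguments.

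Your outline takes a genuinely different route. You propose a direct sparse domination with $\psi_\theta$-reweighted averages, followed by a Lacey--Moen--P\'erez--Torres style two-weight argument for the resulting sparse form. The paper instead first proves a pointwise kernel bound (Lemma~\ref{another upper bound}) that already absorbs the $\rho$-decay, then dominates the bilinear form by a sum over \emph{all} dyadic cubes weighted by $\widetilde\psi_\theta(Q)^{-1}$ (Proposition~\ref{sparse domination for fractional}); no sparse selection is performed at this stage. The crux is a level-set decomposition of the dyadic grid into $\mathcal{Q}_r=\{Q:\widetilde\psi_\theta(Q)\sim 2^{r\theta}\}$, on each of which the \emph{classical} sharp bound of \cite{LaMPT} (or \cite{IM} in the matrix case) is invoked verbatim, with weight constant $[W]_{\mathcal{A}_{p,q}^{\mathcal{Q}_r}}\lesssim[W]_{\widetilde{\mathcal{A}}_{p,q}^{\rho,\theta/K}}2^{r\theta(1/K+q/(Kp'))}$. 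The parameter $K$ is chosen precisely so that this growth is beaten by the prefactor $2^{-r\theta}$, and the geometric series in $r$ converges. This is where your identification of $K$ is correct in spirit.

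Your explanation of the constant $3$, however, is wrong. It does not arise from a shifted-grid (``one-third trick'') decomposition; the grids $\mathcal D^t$, $t\in\{0,1/3\}^d$, enter only to pass from arbitrary cubes to dyadic ones and contribute no factor in the exponent. The $3$ comes from the comparison between the centered quantity $\psi_\theta(Q)=(1+l_Q/\rho(x_Q))^\theta$ and its supremal variant $\widetilde\psi_\theta(Q)=(1+l_Q/\tilde\rho(Q))^\theta$: the inequality $\widetilde\psi_{3\theta}(Q)^{-1}\lesssim\psi_\theta(Q)^{-1}$ of Lemma~\ref{realtionship} (equation~\eqref{rao dong}), itself a consequence of the critical-radius regularity~\eqref{critical radius function}. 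This is why the weight class must carry parameter $\theta/(3K)$ rather than $\theta/K$.

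Beyond that, your proposal is too schematic to assess: you do not explain how a stopping-time selection would interact with the $\psi_\theta$-reweighting (nestedness and Carleson packing are not automatic once averages carry the factor $(\psi_\theta(Q)|Q|)^{\alpha/d-1}$), nor how the local/global split at $l_Q=\rho(x_Q)$ produces a genuine sparse form rather than the full dyadic sum the paper actually uses. The paper's level-set trick sidesteps both issues entirely by reducing to the already-known unweighted-$\psi$ theorem on each stratum.
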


Recall that a matrix weight $W:\mathbb{R}^d\rightarrow\mathbb{C}^{n\times n}$ is a self-adjoint matrix function with locally integrable entries such that $W(x)$ is almost everywhere positive definite almost everywhere. In the setting of matrix weight, Bickel, Petermichl and Wick \cite{BiPeWi} proved the following quantitative weighted estimates for Hilbert transform $H$,
\begin{equation*}
\|H\|_{L^2(W)\rightarrow L^2(W)}\lesssim[W]_{A_2}^{3/2}\log([W]_{A_2}),
\end{equation*}
where $L^p(W)$ is the collection of measurable vector-valued functions $\vec{f}:\mathbb{R}^d\rightarrow \mathbb{C}^n$ with
\begin{equation*}
\|\vec{f}\|_{L^p(W)}:=\Big(\int_{\mathbb{R}^d}|W(x)^{1/p}\vec{f}(x)|^pdx\Big)^{1/p}<\infty,
\end{equation*}
and $W^r$ for any $r\in\mathbb{R}$ is defined by setting $W^r:=OD(\lambda_i^r)O^T$ for some measurable orthogonal matrix function $O$, here $D(\lambda_i)$ is the diagonal matrix and $\lambda_i$ ($i=1,2,\cdots,n$) are the positive eigenvalues of $W$. The result above was improved by Nazarov et al. \cite{NaPeTrVo} and Culiuc et al. \cite{CuPlOu} by extending it to all Calder\'{o}n-Zygmund operators $T$ and eliminating $\log([W]_{A_2})$. It was also extended to all $p\in(1,\infty)$ by Cruz-Uribe, Isralowitz and Moen \cite{CrIsMo}. However, only few sharp quantitative matrix weighted estimates are known until now, see \cite{HyPeVo,IM,IsPoRi,LeLiOmRi}. Let $W$ be a matrix weight and $1<p,q<\infty$. Isralowitz and Moen \cite{IM} introduced the matrix $\mathcal{A}_{p,q}$ weight, we write $W\in \mathcal{A}_{p,q}$ if
\begin{equation*}
[W]_{\mathcal{A}_{p,q}}:=\sup_{Q}\frac{1}{|Q|}\int_Q\Big(\frac{1}{|Q|}
\int_Q|W(x)^{1/q}W(y)^{-1/q}|_{op}^{p'}dy\Big)^{q/p'}dx<\infty,
\end{equation*}
where
\begin{equation*}
|W(x)|_{op}:=\underset{|\vec{e}|=1}{\sup_{\vec{e}\in\mathbb{C}^n}}|W(x)\vec{e}|.
\end{equation*}
Isralowitz and Moen \cite{IM} proved the following quantitative matrix weighted estimates for $I_\alpha$ and matrix-weighted fractional maximal function $M_{W,\alpha}$.
\begin{theorem}{\rm(\cite{IM})}\label{quantitative matrix weighted for classical fractional}
Suppose $0<\alpha<d$, $1<p<d/\alpha$ and $1/q=1/p-\alpha/d$. If $W\in \mathcal{A}_{p,q}$, then
\begin{equation*}
\|M_{W,\alpha}\|_{L^p\rightarrow L^q}\lesssim[W]_{\mathcal{A}_{p,q}}^{p'(1-\alpha/d)/q},~
\|I_\alpha\|_{L^p(W^{p/q})\rightarrow L^q(W)}\lesssim[W]_{\mathcal{A}_{p,q}}^{p'(1-\alpha/d)/q+1/q'},
\end{equation*}
where
\begin{equation*}
M_{W,\alpha}\vec{f}(x):=\sup_{Q\ni x}\frac{1}{|Q|^{1-\alpha/d}}\int_Q|W(x)^{1/q}W(y)^{-1/q}\vec{f}(y)|dy.
\end{equation*}
\end{theorem}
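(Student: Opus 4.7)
The plan is to combine a sparse-form bound for $I_\alpha$ with the theory of reducing operators for matrix weights: this converts the matrix-weighted estimates into scalar-weighted ones, for which the sharp constants are furnished by Theorem~\ref{quantitative weighted for classical fractional}.

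First, I would recall the standard sparse domination for the fractional integral: there is a $\tfrac12$-sparse family $\mathcal S$ such that, for vector-valued $\vec f,\vec g$,
\begin{equation*}
|\langle I_\alpha \vec g,\vec f\rangle|\;\lesssim\;\sum_{Q\in\mathcal S}|Q|^{\alpha/d}\,\langle|\vec g|\rangle_Q\,\langle|\vec f|\rangle_Q\,|Q|,
\end{equation*}
while the pointwise bound $M_{W,\alpha}\vec f(x)\le\sup_{Q\ni x}\frac{1}{|Q|^{1-\alpha/d}}\int_Q|W(x)^{1/q}W(y)^{-1/q}|_{op}|\vec f(y)|\,dy$ is immediate from the definition and can be linearized into a sparse form in the same way. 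Next I would introduce reducing operators: constant positive self-adjoint matrices $\mathcal V_Q,\mathcal U_Q$, depending on $Q$, chosen so that
\begin{equation*}
|\mathcal V_Q\vec e|^q\approx\frac{1}{|Q|}\int_Q|W(x)^{1/q}\vec e|^q\,dx,\qquad|\mathcal U_Q\vec e|^{p'}\approx\frac{1}{|Q|}\int_Q|W(y)^{-1/q}\vec e|^{p'}\,dy.
\end{equation*}
A short duality computation combined with the definition of $[W]_{\mathcal A_{p,q}}$ gives $|\mathcal V_Q\mathcal U_Q|_{op}^q\lesssim[W]_{\mathcal A_{p,q}}$, which is the bridge between the matrix and scalar settings.

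For the maximal function, I would exploit the submultiplicativity
\begin{equation*}
|W(x)^{1/q}W(y)^{-1/q}\vec f(y)|\le|W(x)^{1/q}\mathcal U_Q|_{op}\cdot|\mathcal U_Q^{-1}W(y)^{-1/q}\vec f(y)|,
\end{equation*}
which reduces $M_{W,\alpha}\vec f$ (up to a standard linearization over a sparse family) to the scalar fractional maximal function applied to $y\mapsto|\mathcal U_Q^{-1}W(y)^{-1/q}\vec f(y)|$, multiplied by the $x$-factor $|W(x)^{1/q}\mathcal U_Q|_{op}$. Applying Theorem~\ref{quantitative weighted for classical fractional} with scalar weight $\omega(x):=|W(x)^{1/q}\mathcal U_Q|_{op}^q$, whose scalar $A_{p,q}$ constant is controlled by $[W]_{\mathcal A_{p,q}}$, produces exactly the exponent $p'(1-\alpha/d)/q$.

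For $I_\alpha$, I would dualize $\|I_\alpha\vec g\|_{L^q(W)}$ against a test vector in the dual space of $L^q(W)$, insert the sparse form, and on each cube $Q$ write $W(x)^{1/q}W(y)^{-1/q}=(W(x)^{1/q}\mathcal U_Q)(\mathcal U_Q^{-1}W(y)^{-1/q})$. One additional Hölder step with exponents $(q,q')$ (needed because both the input and the output are weighted, unlike in the maximal case) produces an extra factor of $|\mathcal V_Q\mathcal U_Q|_{op}\lesssim[W]_{\mathcal A_{p,q}}^{1/q}$, which after raising to $q'$ accounts for the additional $1/q'$ in the final exponent; the remaining estimate is again the scalar sparse fractional bound. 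The principal obstacle will be the careful bookkeeping of Hölder exponents so that the final constant is precisely $[W]_{\mathcal A_{p,q}}^{p'(1-\alpha/d)/q+1/q'}$ rather than a weaker power, together with the uniform-in-$Q$ verification that $\omega(x):=|W(x)^{1/q}\mathcal U_Q|_{op}^q$ lies in scalar $A_{p,q}$ with constant $\lesssim[W]_{\mathcal A_{p,q}}$ — this is the only place where the matrix $\mathcal A_{p,q}$ hypothesis is genuinely invoked.
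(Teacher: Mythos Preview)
This theorem is not proved in the present paper; it is quoted from Isralowitz--Moen \cite{IM} as background. The paper only \emph{refers} to its proof technique when establishing Theorems~\ref{quantitative matrix weighted estimates for fractional integrals associated with operators} and~\ref{quantitative matrix weighted estimates for fractional maximal operator associated with operators}, writing ``by carefully following a similar scheme in the proof of Theorem~\ref{quantitative matrix weighted for classical fractional}'' and directing the reader to \cite{IM}. There is therefore no in-paper proof against which to compare your proposal.

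A remark on the proposal itself. The overall architecture --- sparse domination for $I_\alpha$, reducing operators $\mathcal V_Q,\mathcal U_Q$, and the key relation $|\mathcal V_Q\mathcal U_Q|_{op}\lesssim[W]_{\mathcal A_{p,q}}^{1/q}$ --- is indeed the framework of \cite{IM}. However, the specific mechanism you outline for $M_{W,\alpha}$, namely applying the \emph{scalar} sharp bound of Theorem~\ref{quantitative weighted for classical fractional} with the weight $\omega(x)=|W(x)^{1/q}\mathcal U_Q|_{op}^q$, does not work as written: after sparse linearization each cube $Q$ carries its own reducing operator $\mathcal U_Q$, so you are not confronting a single scalar operator with a single scalar weight, and Theorem~\ref{quantitative weighted for classical fractional} cannot be invoked with a $Q$-dependent $\omega$. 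The argument in \cite{IM} instead treats the sparse sum directly via a stopping-time/Carleson-embedding argument, using $|\mathcal V_Q\mathcal U_Q|_{op}\lesssim[W]_{\mathcal A_{p,q}}^{1/q}$ cube-by-cube, and does not pass through the scalar theorem. Your identification of where the extra $1/q'$ arises for $I_\alpha$ is in the right spirit, but the route to the precise exponents is this direct sparse argument rather than reduction to the scalar case.
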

For more historical works on the matrix weighted estimates and quantitative matrix weighted estimates for harmonic analysis operators, we refer readers to see \cite{DiHyLi,DuLiYa,G,Is,MuRi,T,TV,V}, and references therein.

On the other hand, it is well-known that the $A_p$ weight can be characterized in terms of the $L^p(\omega)$-boundedness of Hardy-Littlewood maximal operator $M$ or Hilbert transform $H$. Muckenhoupt \cite{M} raised the question of what type of condition can be used to characterize $M$ or $H$: $L^p(\nu)\rightarrow L^p(\mu)$. Analogously to the one-weight case, it is very natural to consider the condition
\begin{equation*}
\sup_Q\Big(\frac{1}{|Q|}\int_Q\mu(x)dx\Big)
\Big(\frac{1}{|Q|}\int_Q\nu(x)^{1-p'}dx\Big)^{p-1}<\infty.
\end{equation*}
Unfortunately, this condition is not sufficient for singular integral operators to be bounded from $L^p(\nu)$ to $L^p(\mu)$, see \cite{CRMaPe}. To solve this problem, Sawyer \cite{S} first introduced the testing condition, which is sufficient and necessary for $M$ to be bounded from $L^p(\nu)$ to $L^p(\mu)$. However, due to the operator $M$ itself involving into the testing condition, researchers attempted to seek for some sufficient conditions that are easy to verify and close to \eqref{Ap} in some sense. In 1995, P\'{e}rez \cite{Pe} introduced the ``Orlicz bump'' condition for $M$: $L^p(\nu)\rightarrow L^p(\mu)$ as follows:
\begin{equation*}
\sup_Q\|\mu^{1/p}\|_{p,Q}\|\nu^{-1/p}\|_{\Phi,Q}<\infty,~1<p<\infty,
\end{equation*}
where $\Phi$ is a Young function and $\bar{\Phi}\in B_p$. Here, we interpret some notations. Young function $\Phi:[0,\infty)\rightarrow[0,\infty)$ is a continuous, increasing, convex function that satisfies $\Phi(0)=0$ and $\lim_{t\rightarrow\infty}\Phi(t)/t=\infty$. The corresponding complementary function of $\Phi$, denoted by $\bar{\Phi}$, is given by
\begin{equation*}
\bar{\Phi}(t)=\sup_{s>0}\{st-\Phi(s)\}.
\end{equation*}
We call that $\Phi\in B_p$ if
\begin{equation*}
\int_1^\infty\frac{\Phi(t)}{t^p}\frac{dt}{t}<\infty,~1<p<\infty.
\end{equation*}
Let $\Phi$ be a Young function. The localized Orlicz norm $\|f\|_{\Phi,Q}$ is defined by
\begin{equation*}
\|f\|_{\Phi,Q}:=\inf\Big\{\lambda>0:\frac{1}{|Q|}\int_Q\Phi\Big(\frac{|f(x)|}{\lambda}\Big)dx\leq1\Big\}.
\end{equation*}
Cruz-Uribe and P\'{e}rez \cite{CrPe} conjectured that if a pair of weights $(\mu,\nu)$ satisfies
\begin{equation*}
\sup_Q\|\mu^{1/p}\|_{\Psi,Q}\|\nu^{-1/p}\|_{\Phi,Q}<\infty,~1<p<\infty,
\end{equation*}
with $\bar{\Phi}\in B_{p}$ and $\bar{\Psi}\in B_{p'}$, then the Calder\'{o}n-Zygmund operators $T$ is bounded from $L^p(\nu)$ to $L^p(\mu)$. This conjecture was finally solved by Lerner \cite{Lerner}. For fractional integrals, P\'{e}rez \cite{Pe1} proved the following Orlicz bump condition
\begin{equation*}
\sup_Q|Q|^{\alpha/d+1/q-1/p}\|\mu^{1/q}\|_{\Psi,Q}\|\nu^{-1/p}\|_{\Phi,Q}<\infty,~\bar{\Psi}\in B_p,~\bar{\Phi}\in B_{q'}
\end{equation*}
is sufficient for $I_\alpha: L^p(\nu)\rightarrow L^q(\mu)$. Cruz-Uribe and Moen \cite{CrMo} improved the conditions in \cite{Pe1} to weaker $B_{p,q}$ conditions: $\bar{\Psi}\in B_{p,q}$, $\bar{\Phi}\in B_{q',p'}$, where $\bar{\Psi}\in B_{p,q}$ if
\begin{equation*}
\int_1^\infty\frac{\bar{\Psi}(t)^{q/p}}{t^q}\frac{dt}{t}<\infty.
\end{equation*}
In 2018, Cruz-Uribe, Isralowitz and Moen \cite{CrIsMo} extended theory of two weight, Orlicz bump conditions to the setting of matrix weight. For the intension of this paper, we only state their results concerned with fractional type integrals.
\begin{theorem}{\rm(\cite{CrIsMo})}\label{matrix weight bump condition classical fractional maximal}
Let $0\leq\alpha<d$, $1<p\leq q<\infty$ with $1/p-1/q\leq\alpha/d$. Assume that $\Phi$ is a Young function satisfying $\bar{\Phi}\in B_{p,q}$. If a pair $(U,V)$ of matrix weights satisfies
\begin{equation*}
\sup_Q|Q|^{\alpha/d+1/q-1/p}\Big(-\!\!\!\!\!\!\!\int_Q
\|U(x)^{1/q}V^{-1/q}\|_{\Phi,Q}^qdx\Big)^{1/q}<\infty.
\end{equation*}
Then $M_{U,V,\alpha}: L^p(\mathbb{R}^d,\mathbb{C}^n)\rightarrow L^q(\mathbb{R}^d,\mathbb{C}^n)$, where
\begin{equation*}
M_{U,V,\alpha}\vec{f}(x):=\sup_{Q\ni x}\frac{1}{|Q|^{1-\alpha/d}}\int_Q|U(x)^{1/q}V(y)^{-1/q}\vec{f}(y)|dy.
\end{equation*}
\end{theorem}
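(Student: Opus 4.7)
The plan is to adapt the scalar Cruz-Uribe--Moen bump technique to the matrix setting by combining Orlicz Hölder, a Calderón--Zygmund sparse decomposition, and the fractional Carleson embedding implicit in the $B_{p,q}$ condition. By the standard three-grid reduction, it suffices to prove the bound for a dyadic version $M_{U,V,\alpha}^{\mathcal{D}}$ of the operator. First I would apply the generalized Hölder inequality in the Orlicz scale $(\Phi,\bar{\Phi})$ to each cube integral, together with the operator-norm bound $|U(x)^{1/q}V(y)^{-1/q}\vec{f}(y)|\leq|U(x)^{1/q}V(y)^{-1/q}|_{op}|\vec{f}(y)|$, to obtain the pointwise estimate
\begin{equation*}
M_{U,V,\alpha}^{\mathcal{D}}\vec{f}(x)\leq 2\sup_{Q\in\mathcal{D},\,Q\ni x}|Q|^{\alpha/d}\|U(x)^{1/q}V^{-1/q}\|_{\Phi,Q}\|\vec{f}\|_{\bar{\Phi},Q},
\end{equation*}
where $\|U(x)^{1/q}V^{-1/q}\|_{\Phi,Q}$ denotes the Orlicz $\Phi$-average in $y$ of $|U(x)^{1/q}V(y)^{-1/q}|_{op}$ with $x$ held fixed.

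Next, I would run a standard Calderón--Zygmund stopping-time argument on the level sets $\{M_{U,V,\alpha}^{\mathcal{D}}\vec{f}>2^k\}$ to extract a sparse family $\mathcal{S}\subset\mathcal{D}$ with pairwise disjoint major subsets $\{E_Q\}_{Q\in\mathcal{S}}$ satisfying $|E_Q|\geq|Q|/2$, and thereby reduce $\|M_{U,V,\alpha}^{\mathcal{D}}\vec{f}\|_{L^q}^{q}$ to a sum of the form
\begin{equation*}
\sum_{Q\in\mathcal{S}}|Q|^{q\alpha/d}\|\vec{f}\|_{\bar{\Phi},Q}^{q}\int_{E_Q}\|U(x)^{1/q}V^{-1/q}\|_{\Phi,Q}^{q}\,dx.
\end{equation*}
Applying the bump hypothesis to the inner integral yields $\int_{Q}\|U(x)^{1/q}V^{-1/q}\|_{\Phi,Q}^{q}\,dx\leq \mathcal{B}^{q}|Q|^{q/p-q\alpha/d}$, where $\mathcal{B}$ is the bump constant, so the whole expression collapses to $\mathcal{B}^{q}\sum_{Q\in\mathcal{S}}|Q|^{q/p}\|\vec{f}\|_{\bar{\Phi},Q}^{q}$.

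To close the argument I would set $\alpha':=d(1/p-1/q)\in[0,\alpha]$, so that $|Q|^{1/p-1/q}=|Q|^{\alpha'/d}$, and note that $|Q|^{\alpha'/d}\|\vec{f}\|_{\bar{\Phi},Q}\leq M_{\bar{\Phi},\alpha'}\vec{f}(x)$ for every $x\in Q$, where $M_{\bar{\Phi},\alpha'}$ denotes the fractional Orlicz maximal operator. Using $|Q|\leq 2|E_Q|$ and the disjointness of $\{E_Q\}$, the Carleson sum is dominated by $2\|M_{\bar{\Phi},\alpha'}\vec{f}\|_{L^q}^{q}$; the hypothesis $\bar{\Phi}\in B_{p,q}$ is exactly the condition that yields $M_{\bar{\Phi},\alpha'}:L^p\to L^q$, completing the proof. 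I expect the main technical difficulty to lie in the sparse extraction, because the quantity inside the supremum carries the $x$-dependent factor $\|U(x)^{1/q}V^{-1/q}\|_{\Phi,Q}$ rather than a standard dyadic average of $\vec{f}$; the saving grace is that the bump condition controls this factor in $L^q$-mean on $Q$, which is precisely the norm that integration over $E_Q$ produces.
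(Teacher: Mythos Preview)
This theorem is quoted from \cite{CrIsMo} and is not proved in the present paper; the paper only invokes its dyadic consequence $\|M_{U,V,\alpha}^{\mathcal D}\|_{L^p\to L^q}\lesssim[U,V]_{p,q,\Phi}^{\mathcal D}$ inside the proof of Theorem~\ref{two weight inequality for fractional maximal operators associated with operators}. So there is no in-paper proof to compare against, but your outline can still be assessed on its own terms and against what the paper signals about the argument in \cite{CrIsMo}.

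Your pointwise Orlicz--H\"older step and the final Carleson embedding via $M_{\bar\Phi,\alpha'}$ with $\bar\Phi\in B_{p,q}$ are both correct. The gap is precisely where you flag it: the sparse extraction. A Calder\'on--Zygmund stopping-time on the level sets $\{M_{U,V,\alpha}^{\mathcal D}\vec f>2^k\}$ does not produce the displayed sum, because the quantity $|Q|^{\alpha/d}\|U(x)^{1/q}V^{-1/q}\|_{\Phi,Q}\,\|\vec f\|_{\bar\Phi,Q}$ depends on $x$. Hence $\{x:M_{U,V,\alpha}^{\mathcal D}\vec f(x)>\lambda\}$ is \emph{not} a union of maximal dyadic cubes on which a fixed cube quantity exceeds $\lambda$: two points $x,x'$ in the same cube may select entirely different cubes in the supremum, and no common stopping cube can be assigned to both. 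Your proposed ``saving grace'' (that the bump controls the $x$-factor in $L^q$-mean over $Q$) only helps \emph{after} a cube $Q$ has already been attached to each $x\in E_Q$; it does not produce that attachment. Stopping instead on $\|\vec f\|_{\bar\Phi,Q}$ alone fails for the same reason: the residual factor $|P|^{\alpha/d}\|U(x)^{1/q}V^{-1/q}\|_{\Phi,P}$ still varies with $P$ in the supremum.

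The device that closes this gap in \cite{CrIsMo}, and which the present paper imports through the auxiliary operator $M_{\mathcal U,V,\psi_\theta,\alpha}$ and Lemma~\ref{distribution decomp}, is the reducing-operator trick: factor
\[
|U(x)^{1/q}V(y)^{-1/q}\vec f(y)|\le |U(x)^{1/q}(\mathcal U_Q^{q,q})^{-1}|_{op}\,|\mathcal U_Q^{q,q}V(y)^{-1/q}\vec f(y)|.
\]
The second factor is independent of $x$, so ordinary dyadic stopping applies to $\sup_{Q\ni x}(\psi|Q|)^{\alpha/d-1}\int_Q|\mathcal U_Q^{q,q}V(y)^{-1/q}\vec f(y)|\,dy$ and yields a genuine sparse family; the first factor obeys $\int_Q|U(x)^{1/q}(\mathcal U_Q^{q,q})^{-1}|_{op}^q\,dx\lesssim_n|Q|$ by the very definition of the reducing operator (cf.\ Lemma~\ref{reducing operator}), which is exactly the $L^q$-mean control you were hoping for, but now decoupled from the selection of stopping cubes. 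With this one modification your outline goes through essentially as written.
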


\begin{theorem}{\rm(\cite{CrIsMo})}\label{matrix weight bump condition classical fractional}
Let $0<\alpha<d$, $1<p\leq q<\infty$ with $1/p-1/q\leq\alpha/d$. Assume that $\Phi$, $\Psi$ are Young function satisfying $\bar{\Phi}\in B_{p,q}$ and $\bar{\Psi}\in B_{q'}$. If a pair $(U,V)$ of matrix weights satisfies
\begin{equation*}
\sup_Q|Q|^{\alpha/d+1/q-1/p}\Big\|\|U(x)^{1/q}V(y)^{-1/q}\|_{\Phi_y,Q}\Big\|_{\Psi_x,Q}<\infty.
\end{equation*}
Then $I_\alpha: L^p(V^{p/q})\rightarrow L^q(U)$.
\end{theorem}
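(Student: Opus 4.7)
\noindent\emph{Proof plan.} The plan is to reduce the matrix-weighted boundedness to a scalar bilinear estimate with a bivariate kernel, sparsify it, and then close with generalized H\"older's inequality in Orlicz spaces together with the $B_{p,q}$/$B_{q'}$ conditions. Set $\vec h(y):=V(y)^{1/q}\vec f(y)$ and $K(x,y):=|U(x)^{1/q}V(y)^{-1/q}|_{op}$, so that $\|\vec h\|_{L^p}=\|\vec f\|_{L^p(V^{p/q})}$ and
\begin{equation*}
|U(x)^{1/q}I_\alpha\vec f(x)|\leq\int_{\R^d}\frac{K(x,y)|\vec h(y)|}{|x-y|^{d-\alpha}}dy.
\end{equation*}
By duality it suffices to prove, for every nonnegative $g\in L^{q'}$,
\begin{equation*}
\Lambda(\vec h,g):=\iint_{\R^d\times\R^d}\frac{K(x,y)|\vec h(y)|g(x)}{|x-y|^{d-\alpha}}dxdy\lesssim\|\vec h\|_{L^p}\|g\|_{L^{q'}}.
\end{equation*}

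Using the pointwise comparison $|x-y|^{\alpha-d}\lesssim\sum_{Q\ni x,y}|Q|^{\alpha/d-1}$ together with a Lerner--Conde-Alonso--Rey stopping-time argument carried out directly on the bivariate data, I would produce a sparse family $\mathcal S$ with pairwise disjoint major subsets $\{E_S\}$ such that
\begin{equation*}
\Lambda(\vec h,g)\lesssim\sum_{S\in\mathcal S}|S|^{\alpha/d-1}\iint_{S\times S}K(x,y)|\vec h(y)|g(x)dydx.
\end{equation*}
On each $S$, applying the generalized H\"older inequality for Orlicz spaces first in $y$ with the complementary pair $(\Phi,\bar\Phi)$ and then in $x$ with $(\Psi,\bar\Psi)$ gives
\begin{equation*}
|S|^{-2}\iint_{S\times S}K(x,y)|\vec h(y)|g(x)dydx\lesssim\|\vec h\|_{\bar\Phi,S}\|g\|_{\bar\Psi,S}\bigl\|\|K(x,\cdot)\|_{\Phi_y,S}\bigr\|_{\Psi_x,S}.
\end{equation*}
The bump hypothesis absorbs the outer Orlicz factor at the cost of $|S|^{1/p-1/q-\alpha/d}$, so the sparse summand is controlled by $|S|^{1+1/p-1/q}\|\vec h\|_{\bar\Phi,S}\|g\|_{\bar\Psi,S}$.

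Finally, set $\beta:=d(1/p-1/q)\in[0,\alpha]$. The sparsity $|S|\leq 2|E_S|$ combined with the disjointness of $\{E_S\}$ and the bounds $|S|^{\beta/d}\|\vec h\|_{\bar\Phi,S}\leq M_{\bar\Phi,\beta}\vec h(x)$, $\|g\|_{\bar\Psi,S}\leq M_{\bar\Psi}g(x)$ for $x\in E_S\subset S$ yield
\begin{equation*}
\sum_{S\in\mathcal S}|S|^{1+\beta/d}\|\vec h\|_{\bar\Phi,S}\|g\|_{\bar\Psi,S}\lesssim\int_{\R^d}M_{\bar\Phi,\beta}\vec h(x)\,M_{\bar\Psi}g(x)\,dx,
\end{equation*}
where $M_{\bar\Phi,\beta}f(x):=\sup_{Q\ni x}|Q|^{\beta/d}\|f\|_{\bar\Phi,Q}$. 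H\"older in $L^q\times L^{q'}$, followed by the P\'erez--Moen characterizations $\bar\Phi\in B_{p,q}\Rightarrow M_{\bar\Phi,\beta}\colon L^p\to L^q$ (valid because $1/p-1/q=\beta/d$) and $\bar\Psi\in B_{q'}\Rightarrow M_{\bar\Psi}\colon L^{q'}\to L^{q'}$, completes the argument. The main obstacle is the sparse reduction: because the kernel $K(x,y)$ is genuinely bivariate, one cannot quote scalar sparse domination for $I_\alpha$ off the shelf but must carry $K$ through the principal-cube construction, verifying that the resulting stopping-time averages still generate a sparse family; once that is in place, the remaining Orlicz--Carleson machinery is standard.
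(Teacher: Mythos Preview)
Your overall ingredients---dyadic discretization, generalized H\"older in Orlicz norms, sparsification, and the Orlicz maximal bounds $M_{\bar\Phi,\beta}\colon L^p\to L^q$, $M_{\bar\Psi}\colon L^{q'}\to L^{q'}$---are exactly the ones used in \cite{CrIsMo} and in the paper's proof of the generalization Theorem~\ref{two weight inequality for fractional integrals associated with operators}, and your closing Carleson-embedding step is correct. The trouble is the order of two of the steps: you propose to sparsify first with the bivariate kernel $K(x,y)$ still present, and apply H\"older on each sparse cube afterward. That inversion is precisely what manufactures the obstacle you flag at the end.

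The paper applies H\"older \emph{before} sparsifying. After the dyadic comparison one lands on the full sum
\[
\sum_{Q\in\mathcal D}|Q|^{\alpha/d-1}\iint_{Q\times Q}|\langle U(x)^{1/q}V(y)^{-1/q}\vec f(y),\vec g(x)\rangle_{\mathbb C^n}|\,dx\,dy;
\]
generalized H\"older in $y$ (pair $(\Phi,\bar\Phi)$) then in $x$ (pair $(\Psi,\bar\Psi)$), followed by the bump hypothesis, strips off the matrix kernel at once and leaves the purely scalar sum $\sum_{Q\in\mathcal D}|Q|^{1+1/p-1/q}\|\vec f\|_{\bar\Phi,Q}\|\vec g\|_{\bar\Psi,Q}$. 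Sparsifying \emph{this} sum is routine: one takes stopping cubes where $\|\vec f\|_{\bar\Phi,Q}$ jumps, and the sum of $|P|^{1+1/p-1/q}\|\vec g\|_{\bar\Psi,P}$ over all dyadic descendants $P$ of a stopping cube $Q$ collapses to $\lesssim|Q|^{1+1/p-1/q}\|\vec g\|_{\bar\Psi,Q}$ via the Amemiya-norm equivalence $\|h\|_{\bar\Psi,Q}\sim\inf_\lambda\{\lambda+\lambda|Q|^{-1}\int_Q\bar\Psi(|h|/\lambda)\}$ and a geometric series over generations. From there your final paragraph applies verbatim.

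By contrast, a principal-cube construction run with the bivariate factor $K(x,y)=|U(x)^{1/q}V(y)^{-1/q}|_{op}$ still in place does not telescope. Stopping on $|\vec h|$ (or on $g$) controls $\int_P|\vec h|$ by $|P|\langle|\vec h|\rangle_S$ for non-stopping $P\subset S$, but gives no control of $\int_P K(x,y)|\vec h(y)|\,dy$ in terms of the corresponding integral over $S$, because $K(x,\cdot)$ has no reason to be approximately constant on $P$; nor does scalar sparse domination of $I_\alpha$ apply, since after fixing a sparse family for $y\mapsto K(x,y)|\vec h(y)|$ that family depends on $x$. So the sparse inequality you wrote, with a general nonnegative $K$ on both sides, is not available by any standard argument. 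The obstacle is self-inflicted; reversing the order of H\"older and sparsification dissolves it.
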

\begin{remark}\label{conjecture}
In the scalar case, since Theorem \ref{matrix weight bump condition classical fractional} does not recapture the weaker hypothesis $\bar{\Psi}\in B_{q',p'}$ in \cite{CrMo}, the authors in \cite{CrIsMo} conjectured that Theorem \ref{matrix weight bump condition classical fractional} remains true if $\bar{\Psi}\in B_{q'}$ is replaced by $\bar{\Psi}\in B_{q',p'}$.
\end{remark}

\subsection{Aims and questions}
Inspired by the works in \cite{CrIsMo,IM,LRW}, this paper is devoted to studying the quantitative matrix weighted estimates and two matrix weight bounds for fractional type integrals associated with general differential operators. To be more precise, let $L$ be a nonnegative self-adjoint operator defined on $L^2(\mathbb{R}^d)$. Thus, $L$ generates an analytic heat semigroup $e^{-tL}$. Denote the kernel of $e^{-tL}$ by $p_t(x,y)$, which merely satisfies \eqref{upper bound}. Let $\theta\geq0$, $U$, $V$ be matrix weights, $\mathcal{U}_{Q}^{q,q}$ be reducing operator such that
\begin{equation*}
|\mathcal{U}_{Q}^{q,q}\vec{e}|\sim\|U^{1/q}\vec{e}\|_{q,Q},~1<q<\infty.
\end{equation*}
We will consider the fractional integrals $L^{-{\alpha}/{2}}$ defined as in \eqref{fractional integral associated with operators}, matrix-weighted fractional maximal function associated with critical radius function $M_{U,V,\psi,\alpha}$, auxiliary matrix-weighted fractional maximal function associated with critical radius function $M_{\mathcal{U},V,\psi,\alpha}$, averaging operator associated with critical radius function $A_{Q}^{\alpha,\theta}$, auxiliary averaging operator associated with critical radius function $\mathcal{A}_{Q}^{\alpha,\theta}$, which are defined by
\begin{equation*}
M_{U,V,\psi_\theta,\alpha}\vec{f}(x):=\sup_{Q\ni x}\frac{1}{(\psi_\theta(Q)|Q|)^{1-\alpha/d}}\int_Q|U(x)^{1/q}V(y)^{-1/q}\vec{f}(y)|dy,
\end{equation*}
\begin{equation*}
M_{\mathcal{U},V,\psi_\theta,\alpha}\vec{f}(x):=\sup_{Q\ni x}
\frac{1}{(\psi_\theta(Q)|Q|)^{1-\alpha/d}}
\int_{Q}|\mathcal{U}_{Q}^{q,q}V(y)^{-1/q}\vec{f}(y)|dy,
\end{equation*}
\begin{equation*}
A_{Q}^{\alpha,\theta}\vec{f}(x):=\frac{1}{(\psi_\theta(Q)|Q|)^{1-\alpha/d}}
\int_{Q}\vec{f}(y)dy\chi_Q(x),
\end{equation*}
and
\begin{equation*}
\mathcal{A}_{Q}^{\alpha,\theta}\vec{f}(x):=\frac{1}{(\psi_\theta(Q)|Q|)^{1-\alpha/d}}
\int_{Q}|\mathcal{U}_{Q}^{q,q}V(y)^{-1/q}\vec{f}(y)|dy\chi_Q(x),
\end{equation*}
respectively. If $U=V=W$, we simply write $M_{U,V,\psi_\theta,\alpha}$ by $M_{W,\psi_\theta,\alpha}$.

To obtain the quantitative matrix weighted estimates for fractional type integrals associated with $L$, a natural ideal is to adopt the idea in \cite{BuiBD,BuiBD1} to dominate the local part by sparse operator, and dominate the global part by maximal operator, however, compare to the scalar case, matrix products of self-adjoint matrices do not commute, moreover, the technique to compare objects and dominate one by another is lost in the vector case, this prevent us from utilizing the technique in \cite{BuiBD,BuiBD1}. Besides, there is additional critical radius function factor in the operator. Hence, the following question is natural.

\textbf{Question 1:} How to obtain the quantitative matrix weighted estimates for fractional type integrals? Moreover, replace the Schr\"{o}dinger operator by more general differential operator $L$, we may face new challenges since the kernels of the semigroup generated by $-L$ are not assumed to satisfy any regularity conditions.

Next, the upcoming issues is the classes of matrix weights that we deal with. In light of the classes of weights in Theorem \ref{quantitative weighted estimates for schrodinger fractional}, it is likely that new classes of matrix weights will be needed.

\textbf{Question 2:} In the setting of $L$, what types of matrix weights and bump conditions are appropriate for quantitative matrix weighted estimates and two-weight inequalities of fractional type integrals, respectively? If new classes of weights exist, how to deal with these classes of matrix weights to achieve the desired conclusions? Can we find some characterizations of these classes of matrix weights?

Finally, concerned with Remark \ref{conjecture}, we also conjecture this conclusion is still true for fractional integrals associated with $L$. However, we believe that there is still a long way to prove this conjecture.

\textbf{Question 3:} In our new setting, can we take a step forward on the path of proving this conjecture?

\subsection{Main results}
Our first results are concerned with the quantitative matrix weighted estimates for fractional type integrals. The definitions of new classes of matrix weights are given in Section 2.
\begin{theorem}\label{quantitative matrix weighted estimates for fractional integrals associated with operators}
Let $\theta\geq0$, $0<\alpha<d$, $1<p<d/\alpha$ and $1/q=1/p-\alpha/d$. Let $K$ satisfy $(\frac{1}{K}+\frac{q}{Kp'})[(1-\frac{\alpha}{d})\frac{p'}{q}+\frac{1}{q'}]=\frac{1}{2}$. If $W\in \mathcal{A}_{p,q}^{\rho,\theta/(3K)}$, then
\begin{equation*}
\|L^{-{\alpha}/{2}}\|_{L^p(W^{p/q})\rightarrow L^q(W)}\lesssim[W]_{\mathcal{A}_{p,q}^{\rho,\theta/(3K)}}^{\frac{p'}{q}(1-\frac{\alpha}{d})+\frac{1}{q'}}.
\end{equation*}
\end{theorem}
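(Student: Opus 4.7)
The plan is to adapt the Schr\"odinger-type decomposition of \cite{LRW} to the matrix-valued setting by passing, via the Gaussian upper bound \eqref{upper bound}, from $L^{-\alpha/2}$ to a family of model operators built from the averaging operators $A_Q^{\alpha,\theta}$ and $\mathcal{A}_Q^{\alpha,\theta}$, and then to estimate these model operators quantitatively in terms of $[W]_{\mathcal{A}_{p,q}^{\rho,\theta/(3K)}}$. Since one cannot meaningfully dominate a matrix-valued quantity pointwise by another, the argument will be carried out for the bilinear form $|\langle L^{-\alpha/2}\vec f,\vec g\rangle|$ with $\vec g\in L^{q'}(W^{-q'/q})$, which converts the matrix problem into a scalar sparse-form estimate that can then be reconstituted using reducing operators.

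First I would integrate out $t$ in the representation $L^{-\alpha/2}\vec f(x)=\int_0^\infty e^{-tL}\vec f(x)\,t^{\alpha/2-1}dt$ componentwise and use \eqref{upper bound} to produce a scalar kernel bound of the form $|K_\alpha(x,y)|\lesssim |x-y|^{\alpha-d}\bigl(1+|x-y|/\rho(x)+|x-y|/\rho(y)\bigr)^{-N}$ for arbitrarily large $N$. Using a critical-radius adapted dyadic grid, this bound together with a standard Calder\'on--Zygmund / stopping-time argument (which is valid here because only a kernel size condition is needed) produces a sparse form
\begin{equation*}
|\langle L^{-\alpha/2}\vec f,\vec g\rangle|\lesssim\sum_{Q\in\mathcal S}\psi_{\theta/3}(Q)^{-M}|Q|^{\alpha/d+1}\,\langle|\vec f|\rangle_Q\langle|\vec g|\rangle_Q,
\end{equation*}
where the factor $\psi_{\theta/3}(Q)^{-M}$ (for any preassigned $M$) comes from the $(1+\sqrt t/\rho)^{-N}$ decay; the prefactor $\theta/3$ corresponds to the restriction to the Calder\'on--Zygmund-like local part, with the global part handled separately by trivial size estimates.

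Next I would convert the scalar sparse form into the matrix-weighted setting by inserting $W(y)^{-1/q}W(y)^{1/q}$ and $W(x)^{-1/q}W(x)^{1/q}$ and replacing local $L^q$/$L^{p'}$ norms by the reducing operators $\mathcal{U}_Q^{q,q}$ (for $W^{1/q}$) and $\mathcal{V}_Q^{p',p'}$ (for $W^{-1/q}$). This turns the sparse form into $\sum_{Q\in\mathcal S}\psi_{\theta/3}(Q)^{-M}|Q|^{\alpha/d+1}\,|\mathcal U_Q^{q,q}\mathcal V_Q^{p',p'}|_{op}\langle |\mathcal V_Q^{p',p'\,-1}W^{-1/q}\vec f|\rangle_Q\langle|\mathcal U_Q^{q,q\,-1}W^{1/q}\vec g|\rangle_Q$, which is the matrix analogue of the scalar sparse form, and reduces the matter to a quantitative $L^p(V^{p/q})\to L^q(U)$ bound for $\mathcal{A}_Q^{\alpha,\theta/(3K)}$. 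Using the definition of $[W]_{\mathcal{A}_{p,q}^{\rho,\theta/(3K)}}$ and a Carleson-embedding / duality argument (in the spirit of Isralowitz--Moen \cite{IM}), I expect to obtain $\|\mathcal A_Q^{\alpha,\theta/(3K)}\|_{L^p(V^{p/q})\to L^q(U)}\lesssim[W]_{\mathcal{A}_{p,q}^{\rho,\theta/(3K)}}^{(1-\alpha/d)p'/q+1/q'}$ per cube, and summing over the sparse family with a geometric series in $\psi_{\theta/3}(Q)^{-M}$ absorbs any residual $[W]$-factor raised to the $K$-th power, the constant $K$ being chosen exactly so that the exponent equation $(1/K+q/(Kp'))[(1-\alpha/d)p'/q+1/q']=1/2$ in the statement balances the two halves.

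The main obstacle will be the joint handling of the non-commutativity of matrix weights with the scale-dependent weight parameter $\theta/(3K)$: the reducing-operator trick that replaces the scalar $|Q|^{-1}\int_Q\cdot$ by $\mathcal U_Q^{q,q}$ must be tightly coupled with the decay factor $\psi_{\theta/3}(Q)^{-M}$, and it is only after optimizing $M$ and $N$ against each other (which forces the precise factor of $3K$ in the class $\mathcal{A}_{p,q}^{\rho,\theta/(3K)}$) that the final exponent $\frac{p'}{q}(1-\frac{\alpha}{d})+\frac{1}{q'}$ is achieved. Verifying that the sparse-form reduction is truly insensitive to the absence of kernel regularity — only the Gaussian size bound is used — is the step most likely to require delicate book-keeping.
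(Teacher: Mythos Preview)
Your high-level move of passing to the bilinear form and exploiting the Gaussian kernel bound is right, but the proposal has a real gap and diverges from the paper's route.

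The gap is the step ``insert $W(y)^{-1/q}W(y)^{1/q}$.'' Once you have bounded $|\langle L^{-\alpha/2}\vec f,\vec g\rangle|$ by a scalar sparse form $\sum_{Q\in\mathcal S}\psi_{\theta/3}(Q)^{-M}|Q|^{\alpha/d+1}\langle|\vec f|\rangle_Q\langle|\vec g|\rangle_Q$, the matrix structure is gone: the averages are of the \emph{scalar} functions $|\vec f|$, $|\vec g|$, and no algebraic identity can reintroduce $W^{\pm 1/q}$ inside them. The correct order is to conjugate first---study $|\langle W^{1/q}L^{-\alpha/2}W^{-1/q}\vec f,\vec g\rangle|$ with $\vec f\in L^p$, $\vec g\in L^{q'}$---so that the kernel bound produces averages of $|W^{-1/q}\vec f|$ and $|W^{1/q}\vec g|$ directly. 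Also, your expected per-cube bound $\|\mathcal A_Q^{\alpha,\theta/(3K)}\|\lesssim[W]^{(1-\alpha/d)p'/q+1/q'}$ is wrong: the sharp single-cube exponent is $1/q$ (Theorem~\ref{character weight via average operator}); the larger exponent emerges only after summation.

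The paper avoids both sparse domination and Carleson embedding. From the kernel size estimate (Lemma~\ref{another upper bound}) it obtains the \emph{full} dyadic majorant
\[
\sum_{Q\in\mathcal D}\frac{1}{\widetilde\psi_\theta(Q)|Q|^{1-\alpha/d}}\int_Q\int_Q|\langle W(y)^{-1/q}\vec f(y),W(x)^{1/q}\vec g(x)\rangle_{\mathbb C^n}|\,dx\,dy
\]
(Proposition~\ref{sparse domination for fractional}; no regularity, no stopping time, no local/global split). It then slices $\mathcal D$ into layers $\mathcal Q_r=\{Q:\widetilde\psi_\theta(Q)\sim 2^{r\theta}\}$, pulls out $2^{-r\theta}$, and on each layer invokes the Isralowitz--Moen bound (Theorem~\ref{quantitative matrix weighted for classical fractional}) as a black box for the restricted class $\mathcal A_{p,q}^{\mathcal Q_r}$. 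The trivial inequality $[W]_{\mathcal A_{p,q}^{\mathcal Q_r}}\le[W]_{\widetilde{\mathcal A}_{p,q}^{\rho,\theta/K}}\,2^{r\theta(1/K+q/(Kp'))}$ together with the defining equation for $K$ makes the resulting series $\sum_r 2^{-r\theta/2}$ geometric, and Lemma~\ref{realtionship} converts $\widetilde{\mathcal A}_{p,q}^{\rho,\theta/K}$ to $\mathcal A_{p,q}^{\rho,\theta/(3K)}$. So the ``optimizing $M$ and $N$'' you flag as the main obstacle is in fact a one-line algebraic balance; the substantive input is the existing matrix-$\mathcal A_{p,q}$ bound applied layer by layer, not a new sparse/reducing-operator argument.
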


\begin{theorem}\label{quantitative matrix weighted estimates for fractional maximal operator associated with operators}
Let $\theta\geq0$, $0\leq\alpha<d$, $1<p<d/\alpha$ and $1/q=1/p-\alpha/d$.\\
{\rm (1)} If $M_{W,\psi,\alpha}$ is bounded from $L^p$ to $L^q$, then $W\in \mathcal{A}_{p,q}^{\rho,\theta}$.\\
{\rm (2)} Let $K$ be defined by the equation $p'/(Kq)+1/K=1/2$. If $W\in \mathcal{A}_{p,q}^{\rho,\theta/(3K)}$, then
\begin{equation*}
\|M_{W,\psi_\theta,\alpha}\|_{L^p\rightarrow L^q}\lesssim[W]_{\mathcal{A}_{p,q}^{\rho,\theta/(3 K)}}^{p'(1-\alpha/d)/q}.
\end{equation*}
\end{theorem}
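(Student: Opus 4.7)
\emph{Necessity (Part (1)).} I would prove this by testing $M_{W,\psi_\theta,\alpha}$ on vector-valued functions supported on a single cube. For a fixed cube $Q$ and unit vector $\vec{e}\in\mathbb{C}^n$, take $\vec{f}(y):=W(y)^{1/q}\vec{e}\,\chi_Q(y)$. Since $W(x)^{1/q}W(y)^{-1/q}\vec{f}(y)=W(x)^{1/q}\vec{e}\,\chi_Q(y)$, one obtains the pointwise lower bound $M_{W,\psi_\theta,\alpha}\vec{f}(x)\ge\frac{|Q|^{\alpha/d}}{\psi_\theta(Q)^{1-\alpha/d}}|W(x)^{1/q}\vec{e}|$ on $Q$; the hypothesized $L^p\to L^q$ bound combined with $1/q=1/p-\alpha/d$ yields
\begin{equation*}
\|W^{1/q}\vec{e}\|_{q,Q}\lesssim\psi_\theta(Q)^{1-\alpha/d}\|W^{1/q}\vec{e}\|_{p,Q}.
\end{equation*}
To extract the full operator-norm formulation of $[W]_{\mathcal{A}_{p,q}^{\rho,\theta}}$, I would combine this one-vector test with a dual choice of $\vec{e}$ adapted to the reducing operator $\mathcal{W}_Q^{p',q}$: optimizing over unit vectors inside the $L^{p'}(Q,dy)$ average produces the operator norm $|W(x)^{1/q}W(y)^{-1/q}|_{op}$, and then averaging in $x$ against $\psi_\theta(Q)^{-1}|Q|^{-1}\,dx$ reproduces exactly the $\mathcal{A}_{p,q}^{\rho,\theta}$ quantity.

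\emph{Sufficiency (Part (2)).} The plan is to dominate $M_{W,\psi_\theta,\alpha}$ by its scalar-valued counterpart $M_{\mathcal{W},W,\psi_\theta,\alpha}$ via the reducing operator $\mathcal{W}_Q^{q,q}$, and then invoke the scalar Schr\"{o}dinger-type estimate of Theorem \ref{quantitative weighted estimates for schrodinger fractional}. The first reduction relies on Minkowski's inequality in $L^q(Q,dx/|Q|)$ together with $\|W^{1/q}\vec{v}\|_{q,Q}\sim|\mathcal{W}_Q^{q,q}\vec{v}|$, which yields
\begin{equation*}
\Big(\frac{1}{|Q|}\int_Q\Big|\int_Q W(x)^{1/q}W(y)^{-1/q}\vec{f}(y)\,dy\Big|^q dx\Big)^{1/q}\lesssim\int_Q|\mathcal{W}_Q^{q,q}W(y)^{-1/q}\vec{f}(y)|\,dy.
\end{equation*}
After dyadicizing the supremum defining $M_{W,\psi_\theta,\alpha}$ (using that $\psi_\theta$ is comparable on comparable cubes), this reduces the problem to bounding the scalar auxiliary $M_{\mathcal{W},W,\psi_\theta,\alpha}\vec{f}$; a further H\"{o}lder step then controls the latter pointwise by the scalar Schr\"{o}dinger fractional maximal operator $M_\alpha^{\rho,\theta'}$ applied to an explicit scalar function of $\vec{f}$, tested against a scalar weight built from $|\mathcal{W}_Q^{q,q}W(\cdot)^{-1/q}|_{op}^{p'}$.

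\emph{Main obstacle.} The principal technical point—and where the factor $3K$ appears—is to identify the scalar weight arising from the previous step as an element of $A_{p,q}^{\rho,\theta'}$ with constant controlled by $[W]_{\mathcal{A}_{p,q}^{\rho,\theta/(3K)}}$. Because matrix products do not commute, the classical scalar reverse H\"{o}lder inequality cannot be applied to $W$ after diagonalization; instead I would apply the Schr\"{o}dinger reverse H\"{o}lder of \cite{LRW} to the scalar weights $x\mapsto|W(x)^{1/q}\vec{e}|^q$ uniformly in $|\vec{e}|=1$ and then take the supremum over $\vec{e}$, accepting the shrinkage $\theta\mapsto\theta/(3K)$. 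The value of $K$, fixed by $p'/(Kq)+1/K=1/2$, measures precisely how much room is needed in the critical-radius scale to run this self-improvement step, while the additional factor of $3$ absorbs the standard $\rho$-comparability loss when passing from a cube $Q$ to its dilate $3Q$.
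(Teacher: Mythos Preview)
Your plan for Part~(2) has a genuine gap. The Minkowski step you describe bounds, for a \emph{fixed} cube $Q$,
\[
\Big(\tfrac{1}{|Q|}\int_Q\Big|\int_Q W(x)^{1/q}W(y)^{-1/q}\vec f(y)\,dy\Big|^q dx\Big)^{1/q}
\lesssim \int_Q|\mathcal W_Q^{q,q}W(y)^{-1/q}\vec f(y)|\,dy,
\]
but $M_{W,\psi_\theta,\alpha}\vec f(x)$ carries the supremum over $Q\ni x$ \emph{inside} the $L^q$ norm, and that supremum does not commute with the passage $W(x)^{1/q}\rightsquigarrow\mathcal W_Q^{q,q}$. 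There is no pointwise inequality $|W(x)^{1/q}\vec v|\lesssim|\mathcal W_Q^{q,q}\vec v|$, so the reduction $\|M_{W,\psi_\theta,\alpha}\vec f\|_{L^q}\lesssim\|M_{\mathcal W,W,\psi_\theta,\alpha}\vec f\|_{L^q}$ is not a consequence of Minkowski; in the literature it is obtained only after a stopping-time argument that is essentially the heart of the proof. Your second reduction is also problematic: the object $|\mathcal W_Q^{q,q}W(\cdot)^{-1/q}|_{op}^{p'}$ is a family of scalar weights indexed by $Q$, not a single weight to which Theorem~\ref{quantitative weighted estimates for schrodinger fractional} applies, and taking a supremum over $\vec e$ in the reverse H\"older inequality does not produce a uniform $A_{p,q}^{\rho,\theta'}$ bound for this family.

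The paper avoids any reduction to the scalar theory. After passing to a dyadic version $M_{W,\widetilde\psi_\theta,\alpha}^{\mathcal D}$ (Proposition~\ref{sparse domination for fractional maximal operator}), it partitions the dyadic grid into level sets $\mathcal Q_r=\{Q\in\mathcal D:\widetilde\psi_\theta(Q)\sim 2^{r\theta}\}$ and bounds
\[
M_{W,\widetilde\psi_\theta,\alpha}^{\mathcal D}\vec f(x)\le\sum_{r\ge0}2^{-r\theta(1-\alpha/d)}\sup_{Q\in\mathcal Q_r}\frac{1}{|Q|^{1-\alpha/d}}\int_Q|W(x)^{1/q}W(y)^{-1/q}\vec f(y)|\,dy.
\]
On each $\mathcal Q_r$ one runs the classical (non-$\rho$) matrix argument of Isralowitz--Moen (Theorem~\ref{quantitative matrix weighted for classical fractional}) verbatim, obtaining a bound $[W]_{\mathcal A_{p,q}^{\mathcal Q_r}}^{p'(1-\alpha/d)/q}$ with $[W]_{\mathcal A_{p,q}^{\mathcal Q_r}}\le[W]_{\widetilde{\mathcal A}_{p,q}^{\rho,\theta/K}}2^{r\theta(1/K+q/(Kp'))}$. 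The exponent $K$ is chosen exactly so that the growth $2^{r\theta(p'/(Kq)+1/K)(1-\alpha/d)}$ is beaten by the decay $2^{-r\theta(1-\alpha/d)}$, leaving a convergent geometric series; the factor $3$ then comes from Lemma~\ref{realtionship} when converting $\widetilde{\mathcal A}_{p,q}^{\rho,\theta/K}$ back to $\mathcal A_{p,q}^{\rho,\theta/(3K)}$. Thus neither scalar reverse H\"older nor Theorem~\ref{quantitative weighted estimates for schrodinger fractional} is invoked.

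For Part~(1) your testing idea is reasonable, though the passage from the one-vector bound $\|W^{1/q}\vec e\|_{q,Q}\lesssim\psi_\theta(Q)^{1-\alpha/d}\|W^{1/q}\vec e\|_{p,Q}$ to the full $\mathcal A_{p,q}^{\rho,\theta}$ condition is left vague. The paper instead observes that $|A_Q^{\alpha,\theta}(W^{1/q}\vec f)(x)|\le M_{W,\psi_\theta,\alpha}(W^{1/q}\vec f)(x)$, so boundedness of $M_{W,\psi_\theta,\alpha}$ gives uniform $L^p(W^{p/q})\to L^q(W)$ boundedness of the averaging operators $A_Q^{\alpha,\theta}$, and then appeals to the equivalence in Theorem~\ref{character weight via average operator}.
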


Next, we present two-weight inequalities for fractional type integrals associated to $L$. Notice that the conditions of Young functions in Theorem \ref{two weight inequality for fractional integrals associated with operators} cover the ones in Theorem \ref{matrix weight bump condition classical fractional} since $B_{q',q'}= B_{q'}$ when $s=q$.
\begin{theorem}\label{two weight inequality for fractional integrals associated with operators}
Let $0<\alpha<d$, $p<q$, $1<p\leq s\leq q<\infty$. Assume that $\Phi$ and $\Psi$ are Young functions such that $\bar{\Phi}\in B_{p,s}$ and $\bar{\Psi}\in B_{q',s'}$. If a pair of weights $(U,V)$ satisfies
\begin{equation*}
[U,V]_{p,q,\alpha,\psi_\theta,\Phi,\Psi}:=\sup_Q
\frac{|Q|^{\alpha/d+1/q-1/p}}{\psi_{\theta/3}(Q)}
\Big\|\|U(x)^{1/q}V(y)^{-1/q}\|_{\Phi_y,Q}\Big\|_{\Psi_x,Q}<\infty,
\end{equation*}
where $\|\cdot\|_{\Phi_y,Q}$ means the Orlicz norm is taken with respect to the $y$ variable and $\|U\|$ means $\||U|_{op}\|$, then
\begin{equation*}
\|L^{-{\alpha}/{2}}\vec{f}\|_{L^q(U)}\lesssim
[U,V]_{p,q,\alpha,\psi_\theta,\Phi,\Psi}\|\vec{f}\|_{L^p(V^{p/q})}.
\end{equation*}
\end{theorem}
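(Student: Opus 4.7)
The plan is to combine a two-weight sparse-type domination, adapted to the semigroup setting where no kernel regularity is available, with a matrix bilinear Orlicz--H\"older inequality tailored to the bump constant.

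By duality, the claim reduces to estimating
\begin{equation*}
\mathcal{I}(\vec f,\vec g):=\int_{\mathbb{R}^d}\bigl\langle U(x)^{1/q}L^{-\alpha/2}\vec f(x),\,\vec g(x)\bigr\rangle\,dx
\end{equation*}
for $\vec g\in L^{q'}(\mathbb{R}^d,\mathbb{C}^n)$ with unit norm. I would split the $t$-integral in \eqref{fractional integral associated with operators} at the threshold $\sqrt{t}\approx\rho(x)$ to write $L^{-\alpha/2}=L^{-\alpha/2}_{\mathrm{loc}}+L^{-\alpha/2}_{\mathrm{glob}}$. For the global part, the decay factor $(1+\sqrt{t}/\rho(x)+\sqrt{t}/\rho(y))^{-N}$ in \eqref{upper bound}, with $N$ chosen large relative to $\theta$, yields absolute convergence of the $t$-integral and dominates the global piece by a matrix fractional maximal operator of type $M_{U,V,\psi_\theta,\alpha}$ up to an extra power of $\psi_\theta$; the extra decay in $\sqrt{t}/\rho$ is then consumed by a dyadic annulus decomposition around $\rho$ and the contribution is absorbed into $[U,V]_{p,q,\alpha,\psi_\theta,\Phi,\Psi}$.

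For the local part, I would establish a sparse domination of the form
\begin{equation*}
|\mathcal{I}_{\mathrm{loc}}(\vec f,\vec g)|\lesssim\sum_{Q\in\mathcal{S}}|Q|\cdot\frac{|Q|^{\alpha/d}}{\psi_\theta(Q)^{1-\alpha/d}}\cdot\frac{1}{|Q|^2}\int_Q\!\!\int_Q\|U(x)^{1/q}V(y)^{-1/q}\|\,|\vec g(x)|\,|V(y)^{1/q}\vec f(y)|\,dx\,dy,
\end{equation*}
for a sparse family $\mathcal{S}$ of cubes $Q$ with $l_Q\lesssim\rho(x_Q)$. In the absence of Calder\'on--Zygmund smoothness of $p_t$, the sparse bound is produced through a grand maximal function built from the semigroup, whose weak-$(1,1)$ boundedness follows from the Gaussian upper bound alone; restricting to sub-critical cubes lets the off-diagonal exponential decay compensate for the missing H\"older regularity. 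Applying in each sparse term the matrix bilinear Orlicz--H\"older inequality
\begin{equation*}
\frac{1}{|Q|^2}\int_Q\!\!\int_Q\|U(x)^{1/q}V(y)^{-1/q}\|\,|\vec g(x)|\,|V(y)^{1/q}\vec f(y)|\,dx\,dy
\leq \bigl\|\,\|U(x)^{1/q}V(y)^{-1/q}\|_{\Phi_y,Q}\bigr\|_{\Psi_x,Q}\cdot\|V^{1/q}\vec f\|_{\bar\Phi,Q}\cdot\|\vec g\|_{\bar\Psi,Q},
\end{equation*}
I can absorb the first factor together with $|Q|^{\alpha/d+1/q-1/p}/\psi_{\theta/3}(Q)$ into $[U,V]_{p,q,\alpha,\psi_\theta,\Phi,\Psi}$, after redistributing the remaining $\psi_\theta$ powers via $\psi_\theta=\psi_{\theta/3}\cdot\psi_{2\theta/3}$ and paying the $\psi_{2\theta/3}$ factor against the semigroup decay.

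Summing over the sparse family then reduces the proof to an $L^p\to L^p$ bound for the scalar Orlicz maximal operator $M_{\bar\Phi}$ applied to $|V^{1/q}\vec f|$ and an $L^{q'}\to L^{q'}$ bound for $M_{\bar\Psi}$ applied to $|\vec g|$. The intermediate exponent $s$ enters through an application of H\"older's inequality with exponents $s/p$ and $s/q'$ during the sparse summation, and the required Orlicz maximal bounds follow from $\bar\Phi\in B_{p,s}$ and $\bar\Psi\in B_{q',s'}$ via the Cruz-Uribe--Moen principle. The main obstacle I foresee is producing the sparse domination for $L^{-\alpha/2}_{\mathrm{loc}}$ without any regularity of $p_t$: the standard Lerner-type argument invokes Calder\'on--Zygmund smoothness, and here one must substitute a Gaussian grand maximal function and exploit the sub-critical cube restriction to recover enough cancellation in the matrix setting (where one cannot freely commute $U^{1/q}$ past $V^{-1/q}$). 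A secondary difficulty is threading the intermediate exponent $s$ through the nested Orlicz norms so that the final bound is linear in the bump constant; this is precisely the mechanism that relaxes $\bar\Psi\in B_{q'}$ (as in Theorem~\ref{matrix weight bump condition classical fractional}) to $\bar\Psi\in B_{q',s'}$, yielding the partial progress on Question 3 claimed in the paper.
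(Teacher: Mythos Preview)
Your plan takes a substantially harder road than the paper's, and the obstacle you flag as central is in fact avoidable. The paper does not split $L^{-\alpha/2}$ into local and global parts, and it does not invoke any Lerner-type sparse domination via a grand maximal function. Instead, it proves a pointwise size bound on the kernel of $L^{-\alpha/2}$ directly from the Gaussian upper bound \eqref{upper bound}: for every $N>0$,
\[
|K(x,y)|\lesssim \frac{1}{\bigl(1+|x-y|(\rho(x)^{-1}+\rho(y)^{-1})\bigr)^{N}}\,\frac{1}{|x-y|^{d-\alpha}}.
\]
This uses only the size of $p_t$; no regularity enters anywhere. Decomposing into dyadic annuli and passing to the $3^d$ shifted dyadic grids then dominates the bilinear form by
\[
\sum_{Q\in\mathcal D}\frac{1}{\widetilde\psi_\theta(Q)\,|Q|^{1-\alpha/d}}\int_Q\!\int_Q\bigl|\langle V(y)^{-1/q}\vec f(y),\,U(x)^{1/q}\vec g(x)\rangle\bigr|\,dx\,dy,
\]
summed over \emph{all} dyadic cubes, not a sparse family. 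After the matrix bilinear Orlicz--H\"older step (which you state correctly), the $\widetilde\psi_\theta$ factor is absorbed into the bump constant via $\widetilde\psi_\theta(Q)^{-1}\lesssim\psi_{\theta/3}(Q)^{-1}$, leaving $\sum_{Q\in\mathcal D}|Q|^{1/p-1/q+1}\|\vec f\|_{\bar\Phi,Q}\|\vec g\|_{\bar\Psi,Q}$. Sparseness enters only now, by a scalar stopping time on the averages $\|\vec f\|_{\bar\Phi,Q}$; the key computation is the geometric collapse $\sum_{P\in\mathcal D(Q)}|P|^{1/p-1/q+1}\|\vec g\|_{\bar\Psi,P}\lesssim|Q|^{1/p-1/q+1}\|\vec g\|_{\bar\Psi,Q}$, which uses $p<q$ and the Amemiya norm.

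Your endgame with the intermediate exponent $s$ is correct and matches the paper: setting $\beta/d=1/p-1/s$, $\gamma/d=1/q'-1/s'$, one bounds the sparse sum by $\int M_{\bar\Phi}^\beta(|\vec f|)\,M_{\bar\Psi}^\gamma(|\vec g|)$ and applies the Cruz-Uribe--Moen fractional Orlicz maximal bounds under $\bar\Phi\in B_{p,s}$, $\bar\Psi\in B_{q',s'}$. So the last third of your outline is right; what is unnecessary is the local/global decomposition and the attempt to manufacture a sparse family at the level of the operator itself. For a positive kernel with the size bound above, the full dyadic sum is the natural starting object, and the ``obstacle'' you anticipate---sparse domination without kernel regularity---simply does not arise.
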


\begin{theorem}\label{two weight inequality for fractional maximal operators associated with operators}
Let $\theta>0$, $0\leq\alpha<d$, $1<p\leq q<\infty$ and $1/p-1/q\leq\alpha/d$. Assume that $\bar{\Phi}\in B_{p,q}$. If a pair $(U,V)$ of matrix weights satisfies
\begin{equation*}
[U,V]_{p,q,\alpha,\psi_{\theta'},\Phi}:=\sup_{Q}\frac{|Q|^{\alpha/d+1/q-1/p}}
{\psi_{\theta'/3}(Q)^{1-\alpha/d}}
\Big(-\!\!\!\!\!\!\!\int_Q\|U(x)^{1/q}V^{-1/q}\|_{\Phi,Q}^qdx\Big)^{1/q}<\infty,~0<\theta'<\theta,
\end{equation*}
then $\|M_{U,V,\psi_\theta,\alpha}\vec{f}\|_{L^q}\lesssim[U,V]_{p,q,\alpha,\psi_{\theta'},\Phi}
\|\vec{f}\|_{L^p}$.
\end{theorem}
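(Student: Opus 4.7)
The plan is to adapt the Cruz-Uribe--Isralowitz--Moen argument for Theorem \ref{matrix weight bump condition classical fractional maximal} to the present setting, using the strict gap $\theta>\theta'$ to absorb the extra factor $\psi_\theta(Q)^{1-\alpha/d}$ in the denominator of the operator against the smaller factor $\psi_{\theta'/3}(Q)^{1-\alpha/d}$ appearing in the bump condition. The ratio $(\psi_{\theta'/3}/\psi_\theta)(Q)^{1-\alpha/d}$ is uniformly bounded by $1$, which is exactly the slack the argument needs.

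First, I linearize and sparsify. For each $x$, select a cube $Q_x \ni x$ almost attaining the supremum defining $M_{U,V,\psi_\theta,\alpha}\vec{f}(x)$; reduce to a single dyadic grid by the standard three-grid trick; then run a Calder\'on--Zygmund / Vitali stopping-time procedure to extract a sparse family $\mathcal{S}$ together with pairwise disjoint sets $E_Q\subset Q$ satisfying $|E_Q|\sim|Q|$. By duality against a non-negative $g$ with $\|g\|_{L^{q'}}=1$, it then suffices to bound
\begin{align*}
\int g(x)\, M_{U,V,\psi_\theta,\alpha}\vec{f}(x)\, dx \lesssim \sum_{Q\in\mathcal{S}} \frac{1}{(\psi_\theta(Q)|Q|)^{1-\alpha/d}} \int_{E_Q} g(x) \int_{Q} |U(x)^{1/q} V(y)^{-1/q}\vec{f}(y)|\, dy\, dx.
\end{align*}

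Next, on each $Q$ I apply $|U(x)^{1/q}V(y)^{-1/q}\vec{f}(y)|\le \|U(x)^{1/q}V(y)^{-1/q}\|_{op}|\vec{f}(y)|$, generalized H\"older in the $y$-variable with $(\Phi,\bar\Phi)$, and then H\"older in $x$ on $E_Q$ with exponents $q,q'$. Inserting the bump condition $[U,V]_{p,q,\alpha,\psi_{\theta'},\Phi}$ and collecting powers of $|Q|$ yields
\begin{align*}
\sum_{Q\in\mathcal{S}} \int_{E_Q} g\, A_Q\, \lesssim [U,V]_{p,q,\alpha,\psi_{\theta'},\Phi} \sum_{Q\in\mathcal{S}} \Big(\frac{\psi_{\theta'/3}(Q)}{\psi_\theta(Q)}\Big)^{1-\alpha/d} |Q|^{1/p} \|\vec{f}\|_{\bar\Phi,Q} \|g\|_{L^{q'}(E_Q)},
\end{align*}
in which the parenthetical ratio is bounded by $1$ since $\theta>\theta'/3$.

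A discrete H\"older inequality over $\mathcal{S}$ with exponents $q,q'$, together with the disjointness of $\{E_Q\}$, collapses the $g$-factor to $\|g\|_{L^{q'}}=1$. For the remaining $\vec f$-sum $\bigl(\sum_{Q} |Q|^{q/p}\|\vec{f}\|_{\bar\Phi,Q}^q\bigr)^{1/q}$, I use $|Q|\sim|E_Q|$ and the pointwise estimate $|Q|^{1/p-1/q}\|\vec{f}\|_{\bar\Phi,Q}\le M_{\bar\Phi,\gamma}|\vec{f}|(x)$ for $x\in E_Q$, where $\gamma:=d(1/p-1/q)\le\alpha$ and $M_{\bar\Phi,\gamma}$ is the scalar fractional Orlicz maximal operator, to dominate the sum by $\|M_{\bar\Phi,\gamma}|\vec{f}|\|_{L^q}$. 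Since $\bar\Phi\in B_{p,q}$ with $1/p-1/q=\gamma/d$, the Cruz-Uribe--Moen fractional Orlicz bound yields $M_{\bar\Phi,\gamma}\colon L^p\to L^q$, closing the chain. The main technical obstacle is the sparsification step: since the supremum defining $M_{U,V,\psi_\theta,\alpha}$ is over arbitrary cubes and $\psi_\theta(Q)$ depends on both $|Q|$ and $x_Q$ through $\rho$, reducing to a single dyadic grid on which $\psi_\theta$ is essentially determined by the dyadic ancestry requires careful use of the local comparability properties of the critical radius function; after this reduction, the subsequent estimates are a routine chain of H\"older-type inequalities built on the bump hypothesis.
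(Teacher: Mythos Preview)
Your approach differs substantially from the paper's, and the displayed sparse bound
\[
\int g(x)\, M_{U,V,\psi_\theta,\alpha}\vec f(x)\,dx \lesssim \sum_{Q\in\mathcal S}\frac{1}{(\psi_\theta(Q)|Q|)^{1-\alpha/d}}\int_{E_Q} g(x)\int_Q |U(x)^{1/q}V(y)^{-1/q}\vec f(y)|\,dy\,dx
\]
is the real gap. You correctly identify that a stopping-time construction is needed, but you misidentify the obstacle: passing from $\psi_\theta$ to a dyadic $\widetilde\psi_\theta$ is handled routinely by Proposition~\ref{sparse domination for fractional maximal operator} and \eqref{rao dong}. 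The genuine difficulty is that the integrand depends on $x$ through $U(x)^{1/q}$, so no stopping criterion that depends only on $Q$ (and hence can produce a fixed sparse family $\mathcal S$) will guarantee that the nearly-maximizing cube $Q_x$ can be replaced by a cube $Q\in\mathcal S$ while controlling $\frac{1}{(\psi_\theta(Q_x)|Q_x|)^{1-\alpha/d}}\int_{Q_x}|U(x)^{1/q}V(y)^{-1/q}\vec f(y)|\,dy$. Indeed, once you apply H\"older in $y$ and try to pass from $Q_x$ to its stopping ancestor $P$, the factor $|Q_x|^{\alpha/d}\|U(x)^{1/q}V^{-1/q}\|_{\Phi,Q_x}$ is not dominated by $|P|^{\alpha/d}\|U(x)^{1/q}V^{-1/q}\|_{\Phi,P}$ in general. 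A further sign that something is off: your chain only uses $\theta>\theta'/3$, so if valid it would prove the theorem for all $\theta'<3\theta$, strictly stronger than what is stated.

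The paper avoids this issue entirely. After reducing to $M_{U,V,\widetilde\psi_\theta,\alpha}^{\mathcal D}$ via Proposition~\ref{sparse domination for fractional maximal operator}, it decomposes the dyadic grid into level sets $\mathcal Q_r=\{Q\in\mathcal D:\widetilde\psi_\theta(Q)\sim 2^{r\theta}\}$, so that on each $\mathcal Q_r$ the $\widetilde\psi_\theta$-factor is essentially constant and the restricted operator is exactly the classical $M_{U,V,\alpha}^{\mathcal Q_r}$. Then the Cruz-Uribe--Isralowitz--Moen Theorem~\ref{matrix weight bump condition classical fractional maximal} is invoked as a \emph{black box} on each $\mathcal Q_r$, giving $\|M_{U,V,\alpha}^{\mathcal Q_r}\|_{L^p\to L^q}\lesssim [U,V]_{p,q,\Phi}^{\mathcal Q_r}\le 2^{r\theta'(1-\alpha/d)}[U,V]_{p,q,\alpha,\widetilde\psi_{\theta'},\Phi}$. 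Summing $\sum_{r\ge0}2^{-r(\theta-\theta')(1-\alpha/d)}$ converges precisely because $\theta>\theta'$, and \eqref{rao dong} converts $\widetilde\psi_{\theta'}$ back to $\psi_{\theta'/3}$. Thus the matrix-weight complications are entirely absorbed into the CIM result, and the strict inequality $\theta>\theta'$ is used in an essential way.
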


\begin{remark}
It is obvious that the class of matrix weight $\mathcal{A}_{p,q}^{\rho}$ are larger than $\mathcal{A}_{p,q}$. Besides, the bump conditions are weaker than the ones in Theorems \ref{matrix weight bump condition classical fractional maximal}, \ref{matrix weight bump condition classical fractional}. Hence, our results give better quantitative constant for fractional type integrals.
\end{remark}

\begin{theorem}\label{boundedness auxiliary fractional maximal operator}
Let $\theta\geq0$, $0\leq\alpha<d$, $1\leq p\leq q<\infty$ with $1/p-1/q=\alpha/d$. Let $(U,V)$ be a pair of matrix weights.
\medskip

{\rm (1)} If $(U,V)\in \mathcal{A}_{p,q}^{\rho,\theta/3}$, then
$$\max\{\|M_{\mathcal{U},V,\psi_\theta,\alpha}\vec{f}\|_{L^{q,\infty}}, \|\mathcal{A}_{Q}^{\alpha,\theta}\vec{f}\|_{L^{q,\infty}}\}\lesssim
[U,V]_{\mathcal{A}_{p,q}^{\rho,\theta/3}}^{1/q}\|\vec{f}\|_{L^p},$$
where the implicit constant is independent of $Q$;

{\rm (2)} If $M_{\mathcal{U},V,\psi_\theta,\alpha}$ or $\mathcal{A}_{Q}^{\alpha,\theta}$ is bounded from $L^p$ to $L^{q,\infty}$ with norm independent of $Q$, then $(U,V)\in \mathcal{A}_{p,q}^{\rho,\theta}$.

\medskip
\end{theorem}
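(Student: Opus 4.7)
The plan is to exploit the very rigid structure of the single-cube averaging operator, namely $\mathcal{A}_Q^{\alpha,\theta}\vec{f}=c_Q\chi_Q$ with $c_Q=(\psi_\theta(Q)|Q|)^{-(1-\alpha/d)}\int_Q|\mathcal{U}_Q^{q,q}V(y)^{-1/q}\vec{f}(y)|\,dy$, and then to transfer this understanding to the maximal operator via a Vitali covering in one direction and via pointwise domination in the other. For the $\mathcal{A}_Q^{\alpha,\theta}$ portion of part (1), I observe that $\|\mathcal{A}_Q^{\alpha,\theta}\vec{f}\|_{L^{q,\infty}}=c_Q|Q|^{1/q}$, apply H\"older with exponents $p,p'$ to bound $c_Q$ by $(\psi_\theta(Q)|Q|)^{-(1-\alpha/d)}\bigl(\int_Q|\mathcal{U}_Q^{q,q}V(y)^{-1/q}|_{op}^{p'}\,dy\bigr)^{1/p'}\|\vec{f}\|_{L^p}$, and then invoke the reducing-operator equivalence $|\mathcal{U}_Q^{q,q}V(y)^{-1/q}|_{op}^q\sim|Q|^{-1}\int_Q|U(x)^{1/q}V(y)^{-1/q}|_{op}^q\,dx$ (a standard dimensional argument interchanging the supremum over unit vectors with the $x$-average) to recognize the resulting expression as $\sim [U,V]_{\mathcal{A}_{p,q}^{\rho,\theta}}^{1/q}\le[U,V]_{\mathcal{A}_{p,q}^{\rho,\theta/3}}^{1/q}$, the last step using the monotonicity $\psi_{\theta/3}\le\psi_\theta$.

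For the $M_{\mathcal{U},V,\psi_\theta,\alpha}$ half of part (1), I run a Calder\'on--Zygmund/Vitali covering at each level: given $\lambda>0$ and $x\in E_\lambda:=\{M_{\mathcal{U},V,\psi_\theta,\alpha}\vec{f}>\lambda\}$, select a cube $Q_x\ni x$ realizing the defining supremum and extract from $\{Q_x\}$ a countable disjoint subfamily $\{Q_j\}$ with $E_\lambda\subset\bigcup_j 3Q_j$. On each $Q_j$, combining the inequality $\lambda<(\psi_\theta(Q_j)|Q_j|)^{-(1-\alpha/d)}\int_{Q_j}|\mathcal{U}_{Q_j}^{q,q}V(y)^{-1/q}\vec{f}(y)|\,dy$ with H\"older and the reducing-operator form of the $\mathcal{A}_{p,q}^{\rho,\theta/3}$ constant yields $|Q_j|\lesssim\lambda^{-q}[U,V]_{\mathcal{A}_{p,q}^{\rho,\theta/3}}\|\vec{f}\|_{L^p(Q_j)}^q$. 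Summing in $j$ with the aid of disjointness and the inequality $\sum_j a_j^{q/p}\le(\sum_j a_j)^{q/p}$ (valid for $q\ge p$) gives $|E_\lambda|\lesssim\lambda^{-q}[U,V]_{\mathcal{A}_{p,q}^{\rho,\theta/3}}\|\vec{f}\|_{L^p}^q$, as required.

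For part (2), the case of $M_{\mathcal{U},V,\psi_\theta,\alpha}$ reduces to that of $\mathcal{A}_Q^{\alpha,\theta}$ via the pointwise bound $|\mathcal{A}_Q^{\alpha,\theta}\vec{f}(x)|\le M_{\mathcal{U},V,\psi_\theta,\alpha}\vec{f}(x)$ for $x\in Q$. Fix a cube $Q$ and, for a.e.\ $y\in Q$, choose a unit vector $\vec{u}(y)$ realizing the operator norm of $\mathcal{U}_Q^{q,q}V(y)^{-1/q}$; then test the hypothesized $L^p\to L^{q,\infty}$ bound against $\vec{f}(y)=\vec{u}(y)|\mathcal{U}_Q^{q,q}V(y)^{-1/q}|_{op}^{p'-1}\chi_Q(y)$. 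Using $p(p'-1)=p'$, one finds $\|\vec{f}\|_{L^p}^p=\int_Q|\mathcal{U}_Q^{q,q}V(y)^{-1/q}|_{op}^{p'}\,dy$ and $\mathcal{A}_Q^{\alpha,\theta}\vec{f}=(\psi_\theta(Q)|Q|)^{-(1-\alpha/d)}\int_Q|\mathcal{U}_Q^{q,q}V(y)^{-1/q}|_{op}^{p'}\,dy\cdot\chi_Q$; taking the $L^{q,\infty}$ norm and rearranging produces exactly the reducing-operator form of the $\mathcal{A}_{p,q}^{\rho,\theta}$ condition, uniform in $Q$.

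I expect the main obstacle to be establishing the equivalence between $[U,V]_{\mathcal{A}_{p,q}^{\rho,\theta}}$ and the ``reducing-operator'' expression $\sup_Q\psi_\theta(Q)^{-q(1-\alpha/d)}\bigl(|Q|^{-1}\int_Q|\mathcal{U}_Q^{q,q}V(y)^{-1/q}|_{op}^{p'}\,dy\bigr)^{q/p'}$ that emerges from both the direct-bound and the testing computations; this requires two careful applications of the dimensional argument exchanging a supremum over unit vectors with an average over the cube, and is the point where non-commutativity of matrix products must be handled with care. Once this equivalence is in hand, the $\theta$-versus-$\theta/3$ gap in (1) is resolved by the monotonicity $\psi_{\theta/3}\le\psi_\theta$, while no such adjustment is needed for the implication in (2).
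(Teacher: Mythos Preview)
Your argument is correct, and for part~(1) it is actually cleaner than the paper's. The paper first passes to a dyadic model (as in Proposition~\ref{sparse domination for fractional maximal operator}), which forces the replacement of $\psi_\theta$ by $\widetilde{\psi}_\theta$; it then builds maximal dyadic cubes (Lemma~\ref{distribution decomp}) and, when matching back to the weight constant, must invoke the comparison $\widetilde{\psi}_\theta^{-1}\lesssim\psi_{\theta/3}^{-1}$ from \eqref{rao dong}. That is where the $\theta/3$ in the statement originates. Your direct Vitali covering never leaves $\psi_\theta$, so your computation in fact yields the bound with $[U,V]_{\mathcal{A}_{p,q}^{\rho,\theta}}$, which is dominated by $[U,V]_{\mathcal{A}_{p,q}^{\rho,\theta/3}}$ via the monotonicity you cite; thus you prove a slightly stronger inequality than the theorem asserts. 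For part~(2), the paper computes the operator norm by abstract $L^p$--$L^{p'}$ duality applied to the map $\vec{f}\mapsto\int_Q V^{-1/q}\vec{f}$, whereas you test against an explicit extremizer; the two arguments are equivalent in content.

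Three small points to tighten. First, the Vitali lemma requires the selected cubes to have bounded side length; this follows from the very H\"older/reducing-operator estimate you use, since the average over $Q$ is $\lesssim |Q|^{-1/q}[U,V]_{\mathcal{A}_{p,q}^{\rho,\theta}}^{1/q}\|\vec{f}\|_{L^p}$, but you should say so. Second, your test function in part~(2) involves the exponent $p'-1$ and therefore only makes sense for $p>1$; the endpoint $p=1$ needs a separate (easier) argument via $\esssup_{y\in Q}|\mathcal{U}_Q^{q,q}V(y)^{-1/q}|_{op}$, which the paper also treats separately. Third, the ``main obstacle'' you anticipate---the equivalence between $[U,V]_{\mathcal{A}_{p,q}^{\rho,\theta}}$ and the reducing-operator expression $\sup_Q\psi_\theta(Q)^{\alpha/d-1}|\mathcal{U}_Q^{q,q}\widetilde{\mathcal{V}}_Q^{q,p'}|_{op}$---is exactly Lemma~\ref{reducing operator}, so no ad hoc dimensional argument is needed.
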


\begin{remark}
The original purpose of Theorem \ref{quantitative matrix weighted estimates for fractional maximal operator associated with operators} and Theorem \ref{boundedness auxiliary fractional maximal operator} is to give a characterization of $\mathcal{A}_{p,q}^{\rho,\theta}$ in terms of the boundedness of $M_{W,\psi_\theta,\alpha}$ and $M_{\mathcal{U},V,\psi_\theta,\alpha}$, respectively, as in \cite{CrIsMo,IM}. However, it is out of our expectations that we can not do it due to the annoying critical radius function in the operators.
\end{remark}

The last theorem is related to the characterization of matrix weights.
\begin{theorem}\label{character weight via average operator}
Let $\theta\geq0$, $0\leq\alpha<d$, $1\leq p\leq q<\infty$ such that $1/p-1/q=\alpha/d$. Let $(U,V)$ be a pair of matrix weights. The following statements are equivalent:
\medskip

{\rm (1)} $(U,V)\in \mathcal{A}_{p,q}^{\rho,\theta}$;

\medskip
{\rm (2)} $\|A_{Q}^{\alpha,\theta}\vec{f}\|_{L^q(U)}\lesssim\|\vec{f}\|_{L^{p}(V^{p/q})}$ with norm independent of $Q$.\\
Moreover, we have the estimate
\begin{equation*}
\|A_{Q}^{\alpha,\theta}\vec{f}\|_{L^q(U)}\lesssim
[U,V]_{\mathcal{A}_{p,q}^{\rho,\theta}}^{1/q}\|\vec{f}\|_{L^{p}(V^{p/q})}.
\end{equation*}
\end{theorem}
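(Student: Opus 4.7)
The plan is to prove the two implications separately: (1) $\Rightarrow$ (2) is a direct pointwise H\"older calculation that simultaneously delivers the quantitative estimate, while (2) $\Rightarrow$ (1) proceeds by a testing argument based on reducing operators followed by $L^p$--$L^{p'}$ duality. The critical radius factor $\psi_\theta(Q)$ plays only a bookkeeping role in this proof, since it is a fixed constant on each cube.

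For (1) $\Rightarrow$ (2), I would insert $V(y)^{-1/q}V(y)^{1/q}$ inside the averaging integral and apply H\"older in $y$ with exponents $p'$ and $p$ to obtain, pointwise in $x\in Q$,
\begin{equation*}
|U(x)^{1/q}A_Q^{\alpha,\theta}\vec f(x)|^q \leq \frac{1}{(\psi_\theta(Q)|Q|)^{q(1-\alpha/d)}}\Big(\int_Q |U(x)^{1/q}V(y)^{-1/q}|_{op}^{p'}\,dy\Big)^{q/p'}\|\vec f\|_{L^p(V^{p/q})}^q.
\end{equation*}
Integrating in $x$ over $Q$ reproduces exactly the quantity appearing in the definition of $[U,V]_{\mathcal{A}_{p,q}^{\rho,\theta}}$; collecting the remaining powers of $\psi_\theta(Q)|Q|$ and using the exponent identity $1+q/p'-q(1-\alpha/d)=0$ (immediate from $1/p-1/q=\alpha/d$) closes the estimate with constant $[U,V]_{\mathcal{A}_{p,q}^{\rho,\theta}}^{1/q}$.

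For (2) $\Rightarrow$ (1), I would test the uniform $L^p(V^{p/q})\to L^q(U)$ bound on functions $\vec f(y)=V(y)^{-1/q}\vec g(y)\chi_Q(y)$, for which $\|\vec f\|_{L^p(V^{p/q})}=\|\vec g\|_{L^p(Q)}$. Writing $\vec a=\int_Q V(y)^{-1/q}\vec g(y)\,dy$ and using the reducing operator identity $\big(\int_Q|U(x)^{1/q}\vec a|^q dx\big)^{1/q}\sim |Q|^{1/q}|\mathcal U_Q^{q,q}\vec a|$, the assumption becomes
\begin{equation*}
\Big|\mathcal U_Q^{q,q}\!\int_Q V(y)^{-1/q}\vec g(y)\,dy\Big|\lesssim C\,|Q|^{-1/q}(\psi_\theta(Q)|Q|)^{1-\alpha/d}\|\vec g\|_{L^p(Q)},
\end{equation*}
where $C$ is the uniform averaging norm. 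Pairing with a unit vector $\vec u\in\mathbb C^n$, using self-adjointness of $\mathcal U_Q^{q,q}$, and dualizing in $\vec g$ via $L^p$--$L^{p'}$ yields $\|V^{-1/q}\mathcal U_Q^{q,q}\vec u\|_{L^{p'}(Q)}\lesssim C|\vec u|\,|Q|^{-1/q}(\psi_\theta(Q)|Q|)^{1-\alpha/d}$. Recognising the left side as $|Q|^{1/p'}|\mathcal V_Q^{p',q}\mathcal U_Q^{q,q}\vec u|$, collapsing the exponents via $-1/q-1/p'+1-\alpha/d=0$, and taking the sup over $|\vec u|=1$ gives $|\mathcal U_Q^{q,q}\mathcal V_Q^{p',q}|_{op}\lesssim C\psi_\theta(Q)^{1-\alpha/d}$.

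The main obstacle I expect is establishing the equivalence between the integral form and the reducing operator form of the matrix class,
\begin{equation*}
[U,V]_{\mathcal{A}_{p,q}^{\rho,\theta}}^{1/q}\sim \sup_Q \psi_\theta(Q)^{-(1-\alpha/d)}\,|\mathcal U_Q^{q,q}\mathcal V_Q^{p',q}|_{op},
\end{equation*}
which is used to translate the conclusion of the second implication back to the original matrix weight condition. This requires swapping a supremum over the unit sphere of $\mathbb C^n$ with an $L^{p'}$ integral over $Q$, which is only legitimate up to dimensional constants through the finite-dimensional comparison $|M|_{op}^{p'}\sim\sum_j|M\vec e_j|^{p'}$, and uses that $|U^{1/q}V^{-1/q}|_{op}=|V^{-1/q}U^{1/q}|_{op}$ since both factors are self-adjoint. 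The endpoint $p=1$ additionally requires replacing the relevant $L^{p'}$ norms with essential suprema, but the testing skeleton goes through unchanged.
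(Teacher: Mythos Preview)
Your proposal is correct and follows essentially the same route as the paper's proof: a direct H\"older calculation for $(1)\Rightarrow(2)$ and a testing/duality argument via reducing operators for $(2)\Rightarrow(1)$. The ``main obstacle'' you flag---the equivalence $[U,V]_{\mathcal{A}_{p,q}^{\rho,\theta}}^{1/q}\sim \sup_Q \psi_\theta(Q)^{\alpha/d-1}|\mathcal U_Q^{q,q}\widetilde{\mathcal V}_Q^{q,p'}|_{op}$---is not something to be proved here but is exactly Lemma~\ref{reducing operator} (quoted from \cite{CrIsMo}); and for the endpoint $p=1$ the paper handles $(1)\Rightarrow(2)$ with Minkowski's inequality and $(2)\Rightarrow(1)$ by testing on $\vec f=\chi_S V^{-1/q}\vec e$ for small $S\subset Q$ followed by the Lebesgue differentiation theorem, which is a bit more than ``replacing $L^{p'}$ norms with essential suprema'' but entirely in the spirit of what you describe.
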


\begin{remark}
Our results have applications in the settings of magnetic Schr\"{o}dinger, Laguerre operator et al, see \cite{BuiBD}.
\end{remark}

The structure of the paper is as follows. In Section 2, we introduce some new classes of matrix weights and some properties of them. In Section 3, we prove Theorems \ref{quantitative matrix weighted estimates for fractional integrals associated with operators}, \ref{quantitative matrix weighted estimates for fractional maximal operator associated with operators}. The proofs of Theorems \ref{two weight inequality for fractional integrals associated with operators}, \ref{two weight inequality for fractional maximal operators associated with operators}, \ref{boundedness auxiliary fractional maximal operator} will be given in Section 4. Finally, we give the proof of Theorem \ref{character weight via average operator} in Section 5.

Throughout the rest of the paper, we denote $f\lesssim g$, $f\thicksim g$ if $f\leq Cg$ and $f\lesssim g \lesssim f$, respectively. For any cube $Q:=Q(x_Q,r_Q)\subset \mathbb{R}^d$, denote the side-length of $Q$ by $l_Q$, $x_Q$ represents the center of $Q$ and $r_Q=l_Q/2$. $\chi_Q$ means the characteristic function of $Q$ and $\sigma Q:=Q(x_Q,\sigma r_Q)$.

\section{New classes of matrix weights associated to critical radius functions}
In this section, we introduce some new classes of matrix weights associated to critical radius functions. Recall that a function $\rho:\mathbb{R}^d\rightarrow(0,\infty)$ is called critical radius function if there are positive constants $C_0$ and $N_0$ such that for arbitrary $x,y\in\mathbb{R}^d$,
\begin{align}\label{critical radius function}
C_0^{-1}\rho(x)\Big(1+\frac{|x-y|}{\rho(x)}\Big)^{-N_0}\leq\rho(y)\leq C_0\rho(x)
\Big(1+\frac{|x-y|}{\rho(x)}\Big)^{\frac{N_0}{N_0+1}}.
\end{align}
\begin{definition}\label{two matrix weight}
Let $\theta\geq0$, $1\leq p\leq q<\infty$ and $\rho$ be a critical radius function. We say that a pair of matrix weights $(U,V)\in \mathcal{A}_{p,q}^{\rho,\theta}$ with $1<p\leq q<\infty$ if
\begin{equation*}
[U,V]_{\mathcal{A}_{p,q}^{\rho,\theta}}:=\sup_{Q}\frac{1}{\psi_\theta(Q)|Q|}\int_Q
\Big(\frac{1}{\psi_\theta(Q)|Q|}\int_Q|U(x)^{1/q}V(y)^{-1/q}|_{op}^{p'}dy\Big)^{q/p'}dx<\infty.
\end{equation*}
When $p=1$ and $1\leq q<\infty$, we say $(U,V)\in \mathcal{A}_{1,q}^{\rho,\theta}$ if
\begin{equation*}
[U,V]_{\mathcal{A}_{1,q}^{\rho,\theta}}:=\sup_Q\esss_{y\in Q}\frac{1}{\psi_\theta(Q)|Q|}\int_Q
|U(x)^{1/q}V(y)^{-1/q}|_{op}^qdx<\infty.
\end{equation*}
\end{definition}
In particular, if $U=V=W$, we write $(U,V)\in \mathcal{A}_{p,q}^{\rho,\theta}$ by $W\in \mathcal{A}_{p,q}^{\rho,\theta}$. To give some properties of this new class of matrix weights, we recall two very important lemmas related to reducing operators.
\begin{lemma}{\rm(\cite{G})}
Given a matrix weight $A$, a Young function $\Psi$, and a cube $Q$, there exists a matrix $\mathcal{A}_{Q}^\Psi$, called a reducing operator of $A$, such that for all $\vec{e}\in \mathbb{C}^n$,
\begin{equation*}
|\mathcal{A}_{Q}^\Psi\vec{e}|\sim\|A\vec{e}\|_{\Psi,Q}.
\end{equation*}
\end{lemma}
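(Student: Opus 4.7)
My plan is to construct the reducing operator $\mathcal{A}_Q^\Psi$ by applying a finite-dimensional norm-equivalence argument to the functional
$$N(\vec{e}) := \|A(\cdot)\vec{e}\|_{\Psi,Q}, \qquad \vec{e}\in\mathbb{C}^n.$$
The overall strategy is to first show that $N$ is a genuine norm on $\mathbb{C}^n$, and then invoke John's ellipsoid theorem to replace it, up to a dimensional constant, by a Hilbertian norm $\vec{e}\mapsto |B\vec{e}|$ arising from a Hermitian positive-definite matrix $B$, which will be declared to be $\mathcal{A}_Q^\Psi$.

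First I would verify that $N$ satisfies the three axioms of a norm. Absolute homogeneity $N(\lambda\vec{e})=|\lambda|N(\vec{e})$ follows immediately from the linearity of $\vec{e}\mapsto A(x)\vec{e}$ together with the homogeneity of the Luxemburg functional $\|\cdot\|_{\Psi,Q}$. The triangle inequality reduces to the pointwise bound $|A(x)(\vec{e}_1+\vec{e}_2)|\leq |A(x)\vec{e}_1|+|A(x)\vec{e}_2|$ combined with the subadditivity of $\|\cdot\|_{\Psi,Q}$ applied to the scalar functions $x\mapsto |A(x)\vec{e}_i|$. Non-degeneracy is the only step that uses the matrix weight hypothesis: if $N(\vec{e})=0$, then $A(x)\vec{e}=0$ for almost every $x\in Q$, which forces $\vec{e}=0$ because $A(x)$ is positive definite on a set of positive measure inside $Q$.

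Next I would apply John's ellipsoid theorem to the unit ball $B_N := \{\vec{e}\in\mathbb{C}^n : N(\vec{e})\leq 1\}$, which, after identifying $\mathbb{C}^n$ with $\mathbb{R}^{2n}$, is a closed, bounded, convex, centrally symmetric body. John's theorem supplies an ellipsoid $E$ with $E\subset B_N\subset \sqrt{2n}\,E$. Writing $E=\{\vec{e}:|B\vec{e}|\leq 1\}$ for the unique Hermitian positive-definite matrix $B$ encoding $E$ and setting $\mathcal{A}_Q^\Psi := B$, one obtains the double inequality
$$|\mathcal{A}_Q^\Psi\vec{e}|\leq N(\vec{e}) = \|A\vec{e}\|_{\Psi,Q} \leq \sqrt{2n}\,|\mathcal{A}_Q^\Psi\vec{e}|,$$
which is exactly the asserted equivalence $|\mathcal{A}_Q^\Psi\vec{e}|\sim \|A\vec{e}\|_{\Psi,Q}$.

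The main point to monitor is that the implied constants depend only on the dimension $n$, so that the construction $(A,\Psi,Q)\mapsto \mathcal{A}_Q^\Psi$ is uniform in $A$, $\Psi$, and $Q$; this uniformity is automatic from the dimension-only nature of the John bound, and is in fact the very feature that makes $\mathcal{A}_Q^\Psi$ useful in subsequent arguments of the paper. I do not anticipate any substantial obstacle beyond the elementary verifications above.
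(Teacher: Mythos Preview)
The paper does not supply its own proof of this lemma; it is quoted verbatim from Goldberg~\cite{G} (with the Orlicz-norm generalisation appearing in \cite{CrIsMo}), so there is no in-paper argument to compare against. Your proposal is precisely the standard proof that appears in those references: verify that $\vec{e}\mapsto\|A\vec{e}\|_{\Psi,Q}$ is a norm on $\mathbb{C}^n$ and then invoke John's ellipsoid theorem to produce an equivalent Hilbertian norm with dimensional constants.

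One small point worth tightening: when you pass from the John ellipsoid $E\subset\mathbb{R}^{2n}$ to ``the unique Hermitian positive-definite matrix $B$ encoding $E$'', you are implicitly using that $E$ is not merely an arbitrary real ellipsoid but one invariant under the circle action $\vec{e}\mapsto e^{i\theta}\vec{e}$. This follows from the uniqueness of the John ellipsoid together with the fact that $B_N$ itself has this invariance (since $N(e^{i\theta}\vec{e})=N(\vec{e})$), and it is exactly what guarantees that $E$ comes from a complex Hermitian form rather than merely a real quadratic form. With that remark added, the argument is complete and matches the literature.
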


\begin{lemma}{\rm(\cite{CrIsMo})}\label{reducing operator}
Given matrix weights $A$ and $B$, Young functions $\Phi$ and $\Psi$, a cube $Q$, and reducing operators $\mathcal{A}_Q^\Psi$ and $\mathcal{B}_Q^\Phi$, then for all $\vec{e}\in\mathbb{C}^n$,
\begin{equation*}
|\mathcal{A}_Q^\Psi\mathcal{B}_Q^\Phi|_{op}\sim\|A(x)\mathcal{B}_Q^\Phi\|_{\Psi_x,Q}\sim
\Big\|\|A(x)B(y)\|_{\Phi_y,Q}\Big\|_{\Psi_x,Q}.
\end{equation*}
\end{lemma}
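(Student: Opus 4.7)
For $p > 1$, my plan is to write
$U(x)^{1/q} \int_Q \vec{f}(y)\,dy = \int_Q U(x)^{1/q} V(y)^{-1/q}\bigl(V(y)^{1/q} \vec{f}(y)\bigr)\,dy,$
apply H\"older in $y$ with exponents $p'$ and $p$, raise the result to the $q$-th power, integrate in $x$ over $Q$, and divide by $(\psi_\theta(Q)|Q|)^{q(1-\alpha/d)}$. The algebraic identity $q(1-\alpha/d) = 1 + q/p'$, equivalent to the hypothesis $1/q = 1/p - \alpha/d$, makes the $\psi_\theta(Q)|Q|$-factors cancel exactly against the definition of $[U,V]_{\mathcal{A}_{p,q}^{\rho,\theta}}$, and produces the quantitative bound $\|A_Q^{\alpha,\theta}\vec{f}\|_{L^q(U)} \leq [U,V]_{\mathcal{A}_{p,q}^{\rho,\theta}}^{1/q} \|\vec{f}\|_{L^p(V^{p/q})}$ claimed in the "Moreover" part. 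For $p = 1$ the same argument goes through with Minkowski's integral inequality in place of H\"older's, using the $\operatorname{ess\,sup}$ form of $[U,V]_{\mathcal{A}_{1,q}^{\rho,\theta}}$.

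\textbf{Direction $(2) \Rightarrow (1)$: test function.} Assume (2) holds with constant $C$ independent of $Q$. Fix $Q$ and $\vec{u} \in \mathbb{C}^n$, and for $p > 1$ test (2) on $\vec{f}(y) = \chi_Q(y)\, c(y)\, V(y)^{1/q}\vec{u}$ for a scalar $c \geq 0$. Then $\|\vec{f}\|_{L^p(V^{p/q})}^p = \int_Q c^p |V^{1/q}\vec{u}|^p\,dy$ and $\int_Q \vec{f} = (\int_Q c)\vec{u}$. Introducing the reducing operator $\mathcal{U}_Q$ for $U^{1/q}$ in the $L^q$-norm averaged against $d\mu_Q = dx/(\psi_\theta(Q)|Q|)$ (existence by Goldberg's lemma, used already in the excerpt), hypothesis (2) together with the scaling $q(1-\alpha/d)-1 = q/p'$ simplifies to
$|\mathcal{U}_Q \vec{u}|\int_Q c \lesssim C(\psi_\theta(Q)|Q|)^{1/p'}\bigl(\int_Q c^p|V^{1/q}\vec{u}|^p\bigr)^{1/p}.$
Maximizing the ratio $\int c / (\int c^p |V^{1/q}\vec{u}|^p)^{1/p}$ over $c \geq 0$ via H\"older duality gives $(\int_Q |V^{1/q}\vec{u}|^{-p'})^{1/p'}$, and rearranging yields
$|\mathcal{U}_Q \vec{u}|\bigl(\tfrac{1}{\psi_\theta(Q)|Q|}\int_Q |V(y)^{1/q}\vec{u}|^{-p'}\,dy\bigr)^{1/p'} \lesssim C.$

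\textbf{Passing to the $\mathcal{A}_{p,q}^{\rho,\theta}$ condition.} Since $-p' < 0 < p$, monotonicity of $L^s$-averages in $s$ gives $\bigl(\tfrac{1}{\psi_\theta(Q)|Q|}\int_Q |V^{1/q}\vec{u}|^{-p'}\bigr)^{-1/p'} \leq \bigl(\tfrac{1}{\psi_\theta(Q)|Q|}\int_Q |V^{1/q}\vec{u}|^{p}\bigr)^{1/p} \sim |\mathcal{V}_Q^{+}\vec{u}|$, where $\mathcal{V}_Q^{+}$ is the reducing operator for $V^{1/q}$ in the $L^{p}$-average. Hence $|\mathcal{U}_Q \vec{u}| \lesssim C|\mathcal{V}_Q^{+}\vec{u}|$ for every $\vec{u}$. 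I then invoke the standard matrix-weight duality $|\mathcal{V}_Q^{+}\mathcal{V}_Q^{-}|_{op} \sim 1$, where $\mathcal{V}_Q^{-}$ is the reducing operator for $V^{-1/q}$ in the $L^{p'}$-average; substituting $\vec{u} = \mathcal{V}_Q^{-}\vec{e}$ yields $|\mathcal{U}_Q \mathcal{V}_Q^{-}|_{op} \lesssim C$. By Lemma \ref{reducing operator}, $|\mathcal{U}_Q \mathcal{V}_Q^{-}|_{op}^{q}$ is comparable (up to a dimensional constant) to the quantity inside the supremum defining $[U,V]_{\mathcal{A}_{p,q}^{\rho,\theta}}$; taking the sup over $Q$ then gives $[U,V]_{\mathcal{A}_{p,q}^{\rho,\theta}} \lesssim C^q$. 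The case $p = 1$ is handled by testing on an approximate delta $\vec{f}(y) = \chi_{B(y_0,\varepsilon)}(y)V(y_0)^{-1/q}\vec{v}$ at a Lebesgue point $y_0 \in Q$, and using a finite-dimensional averaging argument over unit vectors $\vec{v}$ to recover the operator norm $|U(x)^{1/q}V(y_0)^{-1/q}|_{op}$ that appears in $[U,V]_{\mathcal{A}_{1,q}^{\rho,\theta}}$.

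\textbf{Main obstacle.} The delicate step will be the matrix-weight duality $|\mathcal{V}_Q^{+}\mathcal{V}_Q^{-}|_{op}\sim 1$: one direction follows from the H\"older inequality $|\langle \vec{u},\vec{e}\rangle| \lesssim |\mathcal{V}_Q^{+}\vec{u}||\mathcal{V}_Q^{-}\vec{e}|$, but the reverse (which is what I actually need here) relies on a John-ellipsoid/Goldberg-type argument that is classical in the matrix-weight literature yet has to be carried out with the $\psi_\theta(Q)|Q|$-averaged measure in place of the usual $|Q|$-average. The algebraic identity $q(1-\alpha/d) = 1 + q/p'$ is the other recurring ingredient throughout: it is exactly what makes the $\psi_\theta(Q)|Q|$-powers cancel at every step and ensures the implicit constants are independent of $Q$.
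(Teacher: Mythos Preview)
Your proposal does not address the stated lemma at all. The lemma in question is the reducing-operator comparison
\[
|\mathcal{A}_Q^\Psi\mathcal{B}_Q^\Phi|_{op}\sim\|A(x)\mathcal{B}_Q^\Phi\|_{\Psi_x,Q}\sim\Big\|\|A(x)B(y)\|_{\Phi_y,Q}\Big\|_{\Psi_x,Q},
\]
which the paper simply cites from \cite{CrIsMo} and does not prove. What you have written is a sketch for Theorem~\ref{character weight via average operator}, the characterization of $\mathcal{A}_{p,q}^{\rho,\theta}$ via the averaging operators $A_Q^{\alpha,\theta}$.

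Even read as a proof of Theorem~\ref{character weight via average operator}, your $(2)\Rightarrow(1)$ argument has a real gap. First a minor slip: with the test function $\vec{f}=\chi_Q\,c\,V^{1/q}\vec{u}$ you do not get $\int_Q\vec{f}=(\int_Q c)\vec{u}$ or $\|\vec{f}\|_{L^p(V^{p/q})}^p=\int_Q c^p|V^{1/q}\vec{u}|^p$; you presumably meant $\vec{f}=\chi_Q\,c\,\vec{u}$. The substantive problem is the step you flag as the ``main obstacle'': the claimed duality $|\mathcal{V}_Q^{+}\mathcal{V}_Q^{-}|_{op}\lesssim 1$ is \emph{false} in general. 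Already in the scalar case it reads $\|v^{1/q}\|_{p,Q}\,\|v^{-1/q}\|_{p',Q}\lesssim 1$, which is exactly an $A_p$-type condition on $v$, not a consequence of any John-ellipsoid or Goldberg argument. H\"older gives only the lower bound $|\mathcal{V}_Q^{+}\mathcal{V}_Q^{-}|_{op}\gtrsim 1$. The root of the difficulty is that your optimization over $c$ produces $\bigl(\int_Q|V^{1/q}\vec{u}|^{-p'}\bigr)^{1/p'}$, and for matrices $|V^{1/q}\vec{u}|^{-1}$ is unrelated to $|V^{-1/q}\vec{u}|$; there is no way to pass from one to the other without an a priori weight hypothesis on $V$ alone.

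The paper's argument for $(2)\Rightarrow(1)$ sidesteps this completely: it starts from
\[
|\widetilde{\mathcal{V}}_Q^{q,p'}\mathcal{U}_Q^{q,q}\vec{e}|\sim|Q|^{-1/p'}\|\chi_Q\,\mathcal{U}_Q^{q,q}\vec{e}\|_{L^{p'}(V^{-p'/q})}
\]
and dualizes \emph{this} norm directly against $L^p(V^{p/q})$, producing a $\vec{g}$ of unit norm for which the pairing equals $\int_Q\langle\mathcal{U}_Q^{q,q}\vec{e},\vec{g}\rangle=\langle\vec{e},\mathcal{U}_Q^{q,q}\int_Q\vec{g}\rangle$. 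One then bounds $|\mathcal{U}_Q^{q,q}\int_Q\vec{g}|$ by $\|A_Q^{\alpha,\theta}\vec{g}\|_{L^q(U)}$ straight from the definition of the reducing operator, and invokes the assumed bound on $A_Q^{\alpha,\theta}$. No comparison between two reducing operators of $V$ is ever required.
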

Let $\mathcal{W}_{Q}^{q,q}$ and $\widetilde{\mathcal{W}}_{Q}^{q,p'}$ be the reducing operators such that
\begin{equation*}
|\mathcal{W}_{Q}^{q,q}\vec{e}|\sim\|W^{1/q}\vec{e}\|_{q,Q},~
|\widetilde{\mathcal{W}}_{Q}^{q,p'}\vec{e}|\sim\|W^{-1/q}\vec{e}\|_{p',Q}.
\end{equation*}
Based on Lemma \ref{reducing operator}, we have
\begin{equation}\label{equivalent1}
\Big(\frac{1}{|Q|}\int_Q\Big(\frac{1}{|Q|}\int_Q
|W(x)^{1/q}W(y)^{-1/q}|_{op}^{p'}dy\Big)^{q/p'}dx\Big)^{p'/q}\sim
|\mathcal{W}_{Q}^{q,q}\widetilde{\mathcal{W}}_Q^{q,p'}|_{op}^{p'}
\end{equation}
and
\begin{equation}\label{equivalent2}
\frac{1}{|Q|}\int_Q\Big(\frac{1}{|Q|}\int_Q
|W(x)^{-1/q}W(y)^{1/q}|_{op}^{q}dy\Big)^{p'/q}dx\sim
|\widetilde{\mathcal{W}}_Q^{q,p'}\mathcal{W}_{Q}^{q,q}|_{op}^{p'}.
\end{equation}

\begin{theorem}\label{inclusion}
Let $\theta\geq0$, $1\leq p\leq q<\infty$ and $\mathcal{A}_{p,q}^{\rho,\infty}:=\bigcup_{\theta>0}\mathcal{A}_{p,q}^{\rho,\theta}$. Then $\mathcal{A}_{p,q}\subsetneq\mathcal{A}_{p,q}^{\rho}$.
\end{theorem}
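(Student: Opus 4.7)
The plan is to split the proof into an easy containment and a concrete distinguishing example, using scalar weights embedded as multiples of the identity.

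First, I would dispose of the inclusion $\mathcal{A}_{p,q}\subseteq\mathcal{A}_{p,q}^{\rho}$. Since $\psi_\theta(Q)=(1+l_Q/\rho(x_Q))^\theta\geq 1$ for every $\theta\geq 0$, pulling the factor $\psi_\theta(Q)^{-(1+q/p')}$ outside both averages yields
\[
[W]_{\mathcal{A}_{p,q}^{\rho,\theta}}
=\sup_Q\psi_\theta(Q)^{-(1+q/p')}\,\frac{1}{|Q|}\int_Q\Big(\frac{1}{|Q|}\int_Q|W(x)^{1/q}W(y)^{-1/q}|_{op}^{p'}dy\Big)^{q/p'}\!dx
\;\leq\;[W]_{\mathcal{A}_{p,q}},
\]
so any $W\in\mathcal{A}_{p,q}$ already lies in $\mathcal{A}_{p,q}^{\rho,\theta}\subseteq\mathcal{A}_{p,q}^{\rho}$.

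Next, to see the containment is strict, I would work with the simplest admissible critical radius function, $\rho\equiv 1$, which trivially satisfies \eqref{critical radius function}, and reduce to a scalar example. Setting $W=wI_n$ for a positive scalar weight $w$, the matrix norm collapses to $|W(x)^{1/q}W(y)^{-1/q}|_{op}=(w(x)/w(y))^{1/q}$, so the matrix-$\mathcal{A}_{p,q}^{\rho,\theta}$ quantity becomes
\[
\sup_Q\psi_\theta(Q)^{-(1+q/p')}\Big(\frac{1}{|Q|}\int_Q w\Big)\Big(\frac{1}{|Q|}\int_Q w^{-p'/q}\Big)^{q/p'},
\]
and similarly for $\mathcal{A}_{p,q}$. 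It therefore suffices to exhibit a scalar $w$ belonging to the latter-with-$\psi_\theta$-weight for some large $\theta$ but not to the unweighted one.

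The candidate is $w(x)=(1+|x|)^N$ with $N>dq/p'$. I would run a case analysis on $Q=Q(x_0,R)$: (i) small cubes with $R\leq 1$, where $w\sim 1$ on $Q$ and both factors are harmless; (ii) \emph{distant} cubes with $|x_0|>2R$, where $w(x)\sim(1+|x_0|)^N$ uniformly on $Q$, so the two averages $\sim(1+|x_0|)^{\pm N}$ cancel and the product is $O(1)$; (iii) \emph{central} cubes $|x_0|\leq 2R$ with $R\geq 1$, where
\[
\frac{1}{|Q|}\int_Q w\sim R^{N},\qquad
\frac{1}{|Q|}\int_Q w^{-p'/q}\sim R^{-d},
\]
the second estimate using $Np'/q>d$ so that $w^{-p'/q}$ is globally integrable. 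The $A_{p,q}$ product in case (iii) grows like $R^{N-dq/p'}\to\infty$, so $w\notin A_{p,q}$ and hence $W\notin\mathcal{A}_{p,q}$. On the other hand, with $\rho\equiv 1$ one has $\psi_\theta(Q)\sim R^\theta$ for $R\geq 1$; choosing any $\theta\geq (N-dq/p')/(1+q/p')$ gives
\[
\psi_\theta(Q)^{-(1+q/p')}\cdot R^{N-dq/p'}\leq 1
\]
in case (iii), while cases (i) and (ii) are bounded uniformly independently of $\theta$. Thus $W\in\mathcal{A}_{p,q}^{\rho,\theta}\subseteq\mathcal{A}_{p,q}^{\rho}$, establishing $\mathcal{A}_{p,q}\subsetneq\mathcal{A}_{p,q}^{\rho}$.

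The argument is essentially bookkeeping; the only genuine subtlety is the case split for the cube's position/size together with the integrability threshold $Np'/q>d$ used to control $\int w^{-p'/q}$ in case (iii). The reduction to scalar weights is what makes everything straightforward, since the Bongioanni--Harboure--Salinas-type separation for scalar classes is built into the example $w=(1+|x|)^N$ once $N$ is chosen above the $A_{p,q}$ threshold.
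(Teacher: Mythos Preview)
Your argument follows the same strategy as the paper: establish the inclusion from $\psi_\theta(Q)\geq 1$, then obtain strictness by passing to scalar weights $W=wI_n$. The paper simply invokes the known scalar fact $A_{p,q}\subsetneq A_{p,q}^{\rho,\infty}$, whereas you construct an explicit separating example, so your write-up is in fact more detailed than the original.

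Two points deserve attention. First, the theorem as stated concerns the given critical radius function $\rho$ (the one attached to $L$), not one you are free to choose; by fixing $\rho\equiv 1$ you have only handled a single instance. Your construction does adapt to a general $\rho$: fix any $x_0\in\mathbb{R}^d$, set $w(x)=(1+|x-x_0|/\rho(x_0))^N$, and run the same case split using \eqref{critical radius function} to compare $\rho(x_Q)$ with $\rho(x_0)$ on the relevant cubes. You should say this explicitly. Second, in case (i) the assertion ``$w\sim 1$ on $Q$'' is not literally correct when $|x_0|$ is large; what you mean (and what suffices) is that $w$ is essentially constant on any cube with $R\leq 1$, i.e.\ $w(x)\sim w(x_0)$ for $x\in Q$, so the two averages cancel. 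Also note that your displayed formula for $[W]_{\mathcal{A}_{p,q}^{\rho,\theta}}$ and the threshold $N>dq/p'$ tacitly assume $p>1$; the endpoint $p=1$ requires the ess-sup definition, though the same weight $w=(1+|x|)^N$ with any $N>0$ still works there.
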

\begin{proof}
It is easy to see that $[W]_{\mathcal{A}_{p,q}^{\rho,\theta}}\leq[W]_{\mathcal{A}_{p,q}}$, so $\mathcal{A}_{p,q}\subset\mathcal{A}_{p,q}^{\rho,\infty}$. In the scalar case, the classes $\mathcal{A}_{p,q}^{\rho,\theta}$ and $\mathcal{A}_{p,q}$ reduce to $A_{p,q}^{\rho,\theta}$ and $A_{p,q}$, respectively. It is known that $A_{p,q}\subsetneq A_{p,q}^{\rho,\infty}$. This proves the conclusion.
\end{proof}

\begin{theorem}
Let $\theta\geq0$ and $1<p\leq q<\infty$. Then $[W^{-p'/q}]_{\mathcal{A}_{q',p'}^{\rho,\theta}}\sim[W]_{\mathcal{A}_{p,q}^{\rho,\theta}}^{p'/q}$.
\end{theorem}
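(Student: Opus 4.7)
The plan is to reduce both sides of the asserted equivalence to operator norms of products of reducing operators, and then exploit the symmetry $|AB|_{op}=|BA|_{op}$ for self-adjoint reducing operators. The setup is a weighted-average variant of Lemma 2.2, carried out with the critical-radius normalization $\psi_\theta(Q)|Q|$ replacing $|Q|$.

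First I would introduce reducing operators $\mathcal{W}_{Q,\theta}^{q,q}$ and $\widetilde{\mathcal{W}}_{Q,\theta}^{q,p'}$, adapted to the normalized measure $dx/(\psi_\theta(Q)|Q|)$, which satisfy
\begin{equation*}
|\mathcal{W}_{Q,\theta}^{q,q}\vec{e}|\sim\Bigl(\frac{1}{\psi_\theta(Q)|Q|}\int_Q|W(x)^{1/q}\vec{e}|^{q}dx\Bigr)^{1/q},
\end{equation*}
\begin{equation*}
|\widetilde{\mathcal{W}}_{Q,\theta}^{q,p'}\vec{e}|\sim\Bigl(\frac{1}{\psi_\theta(Q)|Q|}\int_Q|W(x)^{-1/q}\vec{e}|^{p'}dx\Bigr)^{1/p'}.
\end{equation*}
These exist by the same construction as in \cite{G} (up to the scalar factors $\psi_\theta(Q)^{-1/q}$ and $\psi_\theta(Q)^{-1/p'}$, one can simply rescale the standard reducing operators) and they can be chosen positive definite, hence self-adjoint.

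Next, applying Lemma \ref{reducing operator} with Young functions $\Phi(t)=t^{p'}$ and $\Psi(t)=t^{q}$ to the pair $(A,B)=(W^{1/q},W^{-1/q})$, but with the normalized measure $dx/(\psi_\theta(Q)|Q|)$, gives
\begin{equation*}
[W]_{\mathcal{A}_{p,q}^{\rho,\theta}}^{1/q}\sim\bigl|\mathcal{W}_{Q,\theta}^{q,q}\widetilde{\mathcal{W}}_{Q,\theta}^{q,p'}\bigr|_{op}.
\end{equation*}
Similarly, writing out the definition of $[W^{-p'/q}]_{\mathcal{A}_{q',p'}^{\rho,\theta}}$ with $(p,q)$ replaced by $(q',p')$ and using $(W^{-p'/q})^{1/p'}=W^{-1/q}$, $(W^{-p'/q})^{-1/p'}=W^{1/q}$, one gets
\begin{equation*}
[W^{-p'/q}]_{\mathcal{A}_{q',p'}^{\rho,\theta}}=\sup_Q\frac{1}{\psi_\theta(Q)|Q|}\int_Q\Bigl(\frac{1}{\psi_\theta(Q)|Q|}\int_Q|W(x)^{-1/q}W(y)^{1/q}|_{op}^{q}dy\Bigr)^{p'/q}dx,
\end{equation*}
and Lemma \ref{reducing operator} applied again (now with the roles of the two factors interchanged) yields
\begin{equation*}
[W^{-p'/q}]_{\mathcal{A}_{q',p'}^{\rho,\theta}}^{1/p'}\sim\bigl|\widetilde{\mathcal{W}}_{Q,\theta}^{q,p'}\mathcal{W}_{Q,\theta}^{q,q}\bigr|_{op}.
\end{equation*}

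Finally, since $\mathcal{W}_{Q,\theta}^{q,q}$ and $\widetilde{\mathcal{W}}_{Q,\theta}^{q,p'}$ are self-adjoint, $(\mathcal{W}_{Q,\theta}^{q,q}\widetilde{\mathcal{W}}_{Q,\theta}^{q,p'})^{\ast}=\widetilde{\mathcal{W}}_{Q,\theta}^{q,p'}\mathcal{W}_{Q,\theta}^{q,q}$, and the operator norm is invariant under taking adjoints, so
\begin{equation*}
\bigl|\mathcal{W}_{Q,\theta}^{q,q}\widetilde{\mathcal{W}}_{Q,\theta}^{q,p'}\bigr|_{op}=\bigl|\widetilde{\mathcal{W}}_{Q,\theta}^{q,p'}\mathcal{W}_{Q,\theta}^{q,q}\bigr|_{op}.
\end{equation*}
Raising this to the $p'$ power and combining the two equivalences above gives $[W^{-p'/q}]_{\mathcal{A}_{q',p'}^{\rho,\theta}}\sim[W]_{\mathcal{A}_{p,q}^{\rho,\theta}}^{p'/q}$, as desired. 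The only mild obstacle is making sure the normalized reducing operators behave uniformly in $Q$ and that the constants produced in Lemma \ref{reducing operator} are independent of the $\psi_\theta$ scaling; both follow because $\psi_\theta(Q)$ is just a positive scalar on the fixed cube $Q$.
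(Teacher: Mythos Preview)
Your argument is correct and follows essentially the same route as the paper: reduce both weight constants to operator norms of products of reducing operators and invoke $|AB|_{op}=|BA|_{op}$. The only cosmetic difference is that you absorb the scalar $\psi_\theta(Q)$ into $\theta$-normalized reducing operators, whereas the paper keeps the standard reducing operators $\mathcal{W}_Q^{q,q},\widetilde{\mathcal{W}}_Q^{q,p'}$ and pulls out the common factor $\psi_\theta(Q)^{-p'/q-1}$ explicitly; also, your displayed equivalences should carry a $\sup_Q$ on the right-hand side.
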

\begin{proof}
According to \eqref{equivalent1} and \eqref{equivalent2}, we get
\begin{align*}
[W]_{\mathcal{A}_{p,q}^{\rho,\theta}}^{p'/q}&=\sup_Q
\Big(\frac{1}{\psi_\theta(Q)|Q|}\int_Q\Big(\frac{1}{\psi_\theta(Q)|Q|}\int_Q
|W(x)^{1/q}W(y)^{-1/q}|_{op}^{p'}dy\Big)^{q/p'}dx\Big)^{p'/q}\\
&\sim\sup_Q\psi_\theta(Q)^{-p'/q-1}|\mathcal{W}_{Q}^{q,q}\widetilde{\mathcal{W}}_Q^{q,p'}|_{op}^{p'}
\end{align*}
and
\begin{align*}
[W^{-p'/q}]_{\mathcal{A}_{q',p'}^{\rho,\theta}}&=\sup_Q
\frac{1}{\psi_\theta(Q)|Q|}\int_Q\Big(\frac{1}{\psi_\theta(Q)|Q|}\int_Q
|W(x)^{-1/q}W(y)^{1/q}|_{op}^{q}dy\Big)^{p'/q}dx\\
&=\sup_Q\psi_\theta(Q)^{-p'/q-1}|\widetilde{\mathcal{W}}_Q^{q,p'}\mathcal{W}_{Q}^{q,q}|_{op}^{p'}.
\end{align*}
The conclusion follows immediately by using $|AB|_{op}=|BA|_{op}$ for any two positive definite matrices $A$ and $B$.
\end{proof}

\begin{theorem}
Let $\theta\geq0$, $1\leq p\leq q<\infty$ such that $1/p-1/q=\alpha/d$. If $W\in \mathcal{A}_{p,q}^{\rho,\theta}$, then for any unit vector $\vec{e}$, $|W^{1/q}\vec{e}|^q\in A_{p,q}^{\rho,\theta}$ and
\begin{align*}
[|W^{1/q}\vec{e}|^q]_{A_{p,q}^{\rho,\theta}}\lesssim[W]_{\mathcal{A}_{p,q}^{\rho,\theta}}.
\end{align*}
\end{theorem}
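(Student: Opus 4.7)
The plan is to reduce the scalar $A_{p,q}^{\rho,\theta}$ constant of $\omega(x):=|W(x)^{1/q}\vec{e}|^q$ pointwise to the defining quantity of $\mathcal{A}_{p,q}^{\rho,\theta}$, via a single pointwise inequality. I would first record the key estimate: for any unit vector $\vec{e}\in\mathbb{C}^n$ and any $x,y\in\mathbb{R}^d$,
\begin{equation*}
\frac{|W(x)^{1/q}\vec{e}|}{|W(y)^{1/q}\vec{e}|}\leq |W(x)^{1/q}W(y)^{-1/q}|_{op}. \quad (\star)
\end{equation*}
This is immediate: write $W(x)^{1/q}\vec{e}=[W(x)^{1/q}W(y)^{-1/q}]\cdot W(y)^{1/q}\vec{e}$ and apply the definition of the operator norm to the unit vector $W(y)^{1/q}\vec{e}/|W(y)^{1/q}\vec{e}|$ (the denominator is a.e.\ nonzero since $W$ is a.e.\ positive definite).

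For $1<p\leq q$, fix a cube $Q$ and abbreviate $\psi:=\psi_\theta(Q)$. Since the $y$-factor does not depend on $x$, the scalar average
\begin{equation*}
\Big(\frac{1}{\psi|Q|}\int_Q\omega(x)\,dx\Big)\Big(\frac{1}{\psi|Q|}\int_Q\omega(y)^{-p'/q}\,dy\Big)^{q/p'}
\end{equation*}
equals, after using the identity $q=p'\cdot(q/p')$,
\begin{equation*}
\frac{1}{\psi|Q|}\int_Q\Big(|W(x)^{1/q}\vec{e}|^{p'}\cdot\frac{1}{\psi|Q|}\int_Q|W(y)^{1/q}\vec{e}|^{-p'}\,dy\Big)^{q/p'}dx.
\end{equation*}
Pulling $|W(x)^{1/q}\vec{e}|^{p'}$ inside the inner $y$-integral, applying the $p'$-th power of $(\star)$ pointwise in $y$, and taking the outer $x$-average then produces exactly $[W]_{\mathcal{A}_{p,q}^{\rho,\theta}}$, as required.

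For $p=1$, the scalar constant to bound is $\sup_Q\esss_{y\in Q}\big(\frac{1}{\psi|Q|}\int_Q\omega(x)\,dx\big)\omega(y)^{-1}$, and inserting the $q$-th power of $(\star)$ directly inside the $x$-integral yields the integrand defining $[W]_{\mathcal{A}_{1,q}^{\rho,\theta}}$ beneath the essential supremum. I do not anticipate any substantive obstacle: once $(\star)$ is in hand, the remaining argument is a Fubini-style rearrangement of averages, and the resulting bound is automatically uniform in $\vec{e}$ since $(\star)$ holds for every unit vector.
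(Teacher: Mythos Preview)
Your proof is correct, and it takes a genuinely different route from the paper. The paper argues indirectly via Theorem~\ref{character weight via average operator}: since $W\in\mathcal{A}_{p,q}^{\rho,\theta}$, the averaging operator $A_Q^{\alpha,\theta}$ is bounded $L^p(W^{p/q})\to L^q(W)$ with norm $\lesssim[W]_{\mathcal{A}_{p,q}^{\rho,\theta}}^{1/q}$; specializing to $\vec{f}=\phi\vec{e}$ this becomes a scalar averaging bound from $L^p(|W^{1/q}\vec{e}|^p)$ to $L^q(|W^{1/q}\vec{e}|^q)$, and the reverse implication of Theorem~\ref{character weight via average operator} (in the scalar case) then places $|W^{1/q}\vec{e}|^q$ in $A_{p,q}^{\rho,\theta}$. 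Your argument bypasses this operator-theoretic detour entirely: the pointwise inequality $(\star)$ feeds the scalar $A_{p,q}^{\rho,\theta}$ integrand directly into the matrix $\mathcal{A}_{p,q}^{\rho,\theta}$ integrand, cube by cube. This is more elementary, self-contained, and in fact yields the bound with constant $1$ rather than an implicit constant coming from reducing operators and duality. The paper's approach, on the other hand, illustrates how the characterization theorem packages such implications and would adapt without change to any other property encoded by the boundedness of $A_Q^{\alpha,\theta}$.
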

\begin{proof}
Let $\phi$ be arbitrary scalar function and $\vec{f}=\phi\vec{e}$. In the one weight case, Theorem \ref{character weight via average operator} indicates that
\begin{equation*}
\phi\mapsto\frac{\chi_Q}{(\psi_\theta(Q)|Q|)^{1-\alpha/d}}\int_Q\phi(x)dx
\end{equation*}
is bounded from the scalar weighted $L^p(|W^{1/q}\vec{e}|^p)$ to the scalar weighted $L^q(|W^{1/q}\vec{e}|^q)$ and $[|W^{1/q}\vec{e}|^q]_{A_{p,q}^{\rho,\theta}}\lesssim[W]_{\mathcal{A}_{p,q}^{\rho,\theta}}$.
\end{proof}

We also need the following class of matrix weights, which is an extension of that in \cite{LRW} to the setting of matrix weight.
\begin{definition}\label{two matrix weight2}
Let $\theta\geq0$, $1\leq p\leq q<\infty$ and $\rho$ be a critical radius function. We say that $W\in \widetilde{\mathcal{A}}_{p,q}^{\rho,\theta}$ with $1<p\leq q<\infty$ if
\begin{equation*}
[W]_{\widetilde{\mathcal{A}}_{p,q}^{\rho,\theta}}:=\sup_{Q}\frac{1}{\widetilde{\psi}_\theta(Q)|Q|}
\int_Q\Big(\frac{1}{\widetilde{\psi}_\theta(Q)|Q|}
\int_Q|W(x)^{1/q}W(y)^{-1/q}|_{op}^{p'}dy\Big)^{q/p'}dx<\infty,
\end{equation*}
When $p=1$ and $1\leq q<\infty$, we say $W\in \widetilde{\mathcal{A}}_{1,q}^{\rho,\theta}$ if
\begin{equation*}
[W]_{\widetilde{\mathcal{A}}_{1,q}^{\rho,\theta}}:=\sup_Q\esss_{y\in Q}\frac{1}{\widetilde{\psi}_\theta(Q)|Q|}\int_Q
|W(x)^{1/q}W(y)^{-1/q}|_{op}^qdx<\infty,
\end{equation*}
where
\begin{equation*}
\tilde{\rho}(Q):=\sup_{x\in Q}\rho(x),~\widetilde{\psi}_\theta(Q):=\Big(1+\frac{l_Q}{\tilde{\rho}(Q)}\Big)^\theta.
\end{equation*}
\end{definition}
Denote $\widetilde{\mathcal{A}}_{p,q}^{\rho,\infty}:=
\bigcup_{\theta>0}\widetilde{\mathcal{A}}_{p,q}^{\rho,\theta}$. The following lemma tells us the relationship between $\mathcal{A}_{p,q}^{\rho,\infty}$ and $\widetilde{\mathcal{A}}_{p,q}^{\rho,\infty}$.
\begin{lemma}\label{realtionship}
Let $\theta>0$ and $1\leq p\leq q<\infty$. Then $[W]_{\mathcal{A}_{p,q}^{\rho,\theta}}\leq[W]_{\widetilde{\mathcal{A}}_{p,q}^{\rho,\theta}}$, $[W]_{\widetilde{\mathcal{A}}_{p,q}^{\rho,3\theta}}\lesssim[W]_{\mathcal{A}_{p,q}^{\rho,\theta}}$ and
$\widetilde{\mathcal{A}}_{p,q}^{\rho,\infty}=\mathcal{A}_{p,q}^{\rho,\infty}$.
\end{lemma}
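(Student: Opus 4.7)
\medskip

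\noindent\textbf{Plan.} All three assertions reduce to a pointwise comparison between $\psi_\theta(Q)$ and $\widetilde{\psi}_\theta(Q)$, because the integrand $|W(x)^{1/q}W(y)^{-1/q}|_{op}^{p'}$ appearing in the definitions of $[W]_{\mathcal{A}_{p,q}^{\rho,\theta}}$ and $[W]_{\widetilde{\mathcal{A}}_{p,q}^{\rho,\theta}}$ is identical; the two norms differ only through an overall factor of $\psi_\theta(Q)^{-1-q/p'}$ versus $\widetilde{\psi}_\theta(Q)^{-1-q/p'}$ (with the obvious modification in the $p=1$ case). Thus the entire lemma is driven by two scalar comparisons of $\psi_\theta$ and $\widetilde{\psi}_\theta$ coming from the critical radius function estimate \eqref{critical radius function}, plus a union argument.

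\medskip

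\noindent\textbf{First inequality.} Since $x_Q\in Q$, we have $\widetilde{\rho}(Q)=\sup_{x\in Q}\rho(x)\geq\rho(x_Q)$, whence $\widetilde{\psi}_\theta(Q)\leq\psi_\theta(Q)$. I would substitute this directly into the definition of $[W]_{\mathcal{A}_{p,q}^{\rho,\theta}}$: replacing each occurrence of $\psi_\theta(Q)^{-1}$ by the larger $\widetilde{\psi}_\theta(Q)^{-1}$ in both the inner and outer averages yields $[W]_{\mathcal{A}_{p,q}^{\rho,\theta}}\leq[W]_{\widetilde{\mathcal{A}}_{p,q}^{\rho,\theta}}$. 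The $p=1$ case is verbatim identical.

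\medskip

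\noindent\textbf{Second inequality.} Here I need to prove $\psi_\theta(Q)\lesssim\widetilde{\psi}_{3\theta}(Q)$ uniformly in $Q$, and then substitute again. I would split into regimes. When $l_Q\leq\rho(x_Q)$, both $\psi_\theta(Q)$ and $\widetilde{\psi}_{3\theta}(Q)$ are comparable to $1$ and nothing is required. When $l_Q>\rho(x_Q)$, I would apply the right half of \eqref{critical radius function} with $x=x_Q$ to an arbitrary $y\in Q$, obtaining
\[
\rho(y)\leq C_0\rho(x_Q)\bigl(1+l_Q/\rho(x_Q)\bigr)^{N_0/(N_0+1)},
\]
and then take the supremum over $y\in Q$ to get the same bound for $\widetilde{\rho}(Q)$. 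Inverting yields a lower bound for $l_Q/\widetilde{\rho}(Q)$ of the form $c\,(l_Q/\rho(x_Q))^{1/(N_0+1)}$, so that raising to the power $3\theta$ absorbs the exponent loss and produces $\widetilde{\psi}_{3\theta}(Q)\gtrsim\psi_\theta(Q)$. Plugging this back into the definition of $[W]_{\widetilde{\mathcal{A}}_{p,q}^{\rho,3\theta}}$ (and again the obvious variant for $p=1$) finishes the step. The equality $\widetilde{\mathcal{A}}_{p,q}^{\rho,\infty}=\mathcal{A}_{p,q}^{\rho,\infty}$ is then immediate: the first inequality gives $\widetilde{\mathcal{A}}_{p,q}^{\rho,\theta}\subseteq\mathcal{A}_{p,q}^{\rho,\theta}$, while the second gives $\mathcal{A}_{p,q}^{\rho,\theta}\subseteq\widetilde{\mathcal{A}}_{p,q}^{\rho,3\theta}$, and both take the union over $\theta>0$ to the same set.

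\medskip

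\noindent\textbf{Main obstacle.} The only nontrivial step is controlling $\widetilde{\rho}(Q)/\rho(x_Q)$ in the large-scale regime $l_Q>\rho(x_Q)$; here the exponent $N_0/(N_0+1)<1$ coming from \eqref{critical radius function} forces the inflation of $\theta$ to $3\theta$ (really, to a multiple depending on $N_0$), and one must check carefully that the resulting $\widetilde{\psi}$-lower bound exceeds $\psi_\theta(Q)$ up to a constant. Everything else is book-keeping: splitting regimes, using $x_Q\in Q$, and substituting the pointwise bound into the double-average definition.
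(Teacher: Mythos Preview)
Your proposal is correct and follows the same route as the paper: both arguments reduce entirely to the scalar comparisons $\widetilde{\psi}_\theta(Q)\leq\psi_\theta(Q)$ (immediate from $x_Q\in Q$) and $\psi_\theta(Q)\lesssim\widetilde{\psi}_{3\theta}(Q)$, the latter being precisely inequality~\eqref{rao dong}, which the paper simply quotes from \cite{LRW} whereas you derive it directly from the critical-radius estimate~\eqref{critical radius function}. Your parenthetical remark that the inflation factor ``$3$'' should in principle depend on $N_0$ is a valid caveat---the constant $3$ is inherited from the Schr\"odinger setting of \cite{LRW}---but this has no bearing on the equality $\widetilde{\mathcal{A}}_{p,q}^{\rho,\infty}=\mathcal{A}_{p,q}^{\rho,\infty}$, which is all that is used downstream.
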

\begin{proof}
Observe that $\widetilde{\psi}_\theta(Q)\leq\psi_\theta(Q)$, so $[W]_{\widetilde{\mathcal{A}}_{p,q}^{\rho,\theta}}\geq[W]_{\mathcal{A}_{p,q}^{\rho,\theta}}$. On the other hand, it was proved in \cite{LRW} that
\begin{equation}\label{rao dong}
\frac{1}{\widetilde{\psi}_{3\theta}(Q)}\lesssim\frac{1}{\psi_\theta(Q)},
\end{equation}
which implies that $[W]_{\widetilde{\mathcal{A}}_{p,q}^{\rho,3\theta}}\lesssim[W]_{\mathcal{A}_{p,q}^{\rho,\theta}}$. Thus, $\widetilde{\mathcal{A}}_{p,q}^{\rho,\infty}=\mathcal{A}_{p,q}^{\rho,\infty}$.
\end{proof}

\section{Quantitative weighted estimates for fractional type integrals associated with operators}
This section is devoted to proving Theorems \ref{quantitative matrix weighted estimates for fractional integrals associated with operators}, \ref{quantitative matrix weighted estimates for fractional maximal operator associated with operators}. Before this, we prove an upper bound for the kernel of $L^{-{\alpha}/{2}}$.
\begin{lemma}\label{another upper bound}
Let $0<\alpha<d$, $K_t(x,y)$ be the kernel of $L^{-{\alpha}/{2}}$ and $\rho$ be critical radius function. Then for every $x,y\in\mathbb{R}^d$ and $N>0$, there is a positive constant $C$ such that
\begin{equation*}
|K_t(x,y)|\leq\frac{C}{\Big(1+|x-y|\Big(\frac{1}{\rho(x)}+\frac{1}{\rho(y)}\Big)\Big)^N}
\frac{1}{|x-y|^{d-\alpha}}.
\end{equation*}
\end{lemma}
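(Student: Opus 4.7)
\textbf{Proof proposal for Lemma \ref{another upper bound}.} My plan is to begin from the subordination representation
\begin{equation*}
K(x,y) = \frac{1}{\Gamma(\alpha/2)}\int_0^\infty p_t(x,y)\, t^{\alpha/2-1}\, dt,
\end{equation*}
which follows from \eqref{fractional integral associated with operators}, and to insert the hypothesis \eqref{upper bound} with parameter $N' := N + \delta$ for a small $\delta > 0$ (permissible because \eqref{upper bound} is assumed for every positive exponent). Setting $r := |x-y|$ and $R := \frac{1}{\rho(x)} + \frac{1}{\rho(y)}$ and then rescaling $t = r^2\tau$ extracts the desired prefactor $r^{\alpha-d}$ and reduces the lemma to the single scalar inequality
\begin{equation*}
I(A) := \int_0^\infty \tau^{(\alpha-d)/2-1}\, e^{-1/(c\tau)}\, (1+A\sqrt{\tau})^{-N'}\, d\tau \;\lesssim\; (1+A)^{-N}, \qquad A := rR.
\end{equation*}

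I would then split into two cases. When $A \leq 1$ the target bound satisfies $(1+A)^{-N}\sim 1$, so I would simply drop the weight factor and use $(\alpha-d)/2-1 < -1$ together with the Gaussian $e^{-1/(c\tau)}$ (which tames the singularity at $\tau=0$) to identify $I(A)$ with a convergent Gamma integral bounded by a universal constant. When $A>1$, I would split the integral at $\tau=1$. On $\{\tau>1\}$ the elementary inequality $(1+A\sqrt{\tau})^{-N'}\leq A^{-N'}\tau^{-N'/2}$ yields
\begin{equation*}
\lesssim A^{-N'}\int_1^\infty \tau^{(\alpha-d-N')/2-1}\,d\tau \;\lesssim\; A^{-N'} \;\leq\; A^{-N},
\end{equation*}
where convergence uses $\alpha<d$. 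On $\{\tau\leq 1\}$ the weight factor alone produces no gain for $\tau\lesssim A^{-2}$, so I would trade Gaussian decay for polynomial via $e^{-1/(c\tau)}\leq C_M\tau^M$ (valid for every $M>0$) and then change variables $u=\sqrt{\tau}$, $v=Au$, to rewrite that contribution as
\begin{equation*}
C_M\, A^{-\beta}\int_0^A v^{\beta-1}(1+v)^{-N'}\,dv, \qquad \beta := \alpha - d + 2M.
\end{equation*}
Choosing $M$ so that $N < \beta < N'$ (possible thanks to $N'>N$) keeps the $v$-integral uniformly bounded in $A$, giving a contribution $\lesssim A^{-\beta}\leq A^{-N}$.

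The main obstacle, and the reason for introducing the slack parameter $\delta$, is that a naive matching of exponents in the small-$\tau$ piece produces either a logarithmic loss (the critical case $\beta=N'$) or a genuine $A^{\beta-N'}$ growth (the case $\beta>N'$). The freedom to apply \eqref{upper bound} with $N'>N$ instead of $N$ itself provides precisely the margin needed to place $\beta$ strictly between $N$ and $N'$ and absorb the loss. Combining the two cases yields $I(A)\lesssim (1+A)^{-N}$, which, together with the $r^{\alpha-d}$ factor pulled out at the beginning, delivers the claimed pointwise bound.
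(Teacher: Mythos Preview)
Your argument is correct. The paper's proof takes a somewhat different and slightly slicker route: it splits the $t$-integral directly at $t=|x-y|^2$ (with no preliminary case split on $A$) and, on the region $t\le |x-y|^2$, exploits the elementary observation that $\sqrt{t}/|x-y|\le 1$ there to replace the weight $\bigl(1+\sqrt{t}\,R\bigr)^{-N}$ by the homogeneous expression $\bigl(\sqrt{t}/|x-y|+\sqrt{t}\,R\bigr)^{-N}=t^{-N/2}|x-y|^{N}\bigl(1+|x-y|R\bigr)^{-N}$, which pulls out the target factor $(1+|x-y|R)^{-N}$ exactly and leaves an explicitly integrable power of $t$, so that no slack in $N$ is needed at all. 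Your rescaling to the dimensionless parameter $A=rR$ and the introduction of $N'=N+\delta$ achieve the same conclusion by a more quantitative bookkeeping of exponents; this perhaps makes the numerology more transparent, while the paper's homogeneity trick buys a shorter proof that avoids both the outer case split on $A$ and the appeal to \eqref{upper bound} at a higher exponent than the one appearing in the conclusion.
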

\begin{proof}
It is direct that
\begin{align}\label{zong kernel}
|K_t(x,y)|&\leq\int_0^\infty|p_t(x,y)|t^{\frac{\alpha}{2}-1}dt\\
&=\int_0^{|x-y|^2}|p_t(x,y)|t^{\frac{\alpha}{2}-1}dt+\int_{|x-y|^2}^\infty|p_t(x,y)|
t^{\frac{\alpha}{2}-1}dt\nonumber
\end{align}

Applying the upper estimate of $p_t(x,y)$ and $e^{-s}\lesssim s^{-M/2}$ for any $M>0$, we have
\begin{align}\label{local estimate of kernel}
&\int_0^{|x-y|^2}|p_t(x,y)|t^{\frac{\alpha}{2}-1}dt\\
&\quad\lesssim\int_0^{|x-y|^2}t^{\frac{\alpha}{2}-\frac{d}{2}-1}e^{-\frac{|x-y|^2}{ct}}
\Big(1+\frac{\sqrt{t}}{\rho(x)}+\frac{\sqrt{t}}{\rho(y)}\Big)^{-N}dt\nonumber\\
&\quad\lesssim\int_0^{|x-y|^2}t^{\frac{\alpha}{2}-\frac{d}{2}-1}
\frac{t^{\frac{N}{2}+\frac{d}{2}-\frac{\alpha}{2}+1}}{|x-y|^{N+d-\alpha+2}}
\Big(\frac{\sqrt{t}}{|x-y|}+\frac{\sqrt{t}}{\rho(x)}+\frac{\sqrt{t}}{\rho(y)}\Big)^{-N}dt\nonumber\\
&\quad=\frac{1}{\Big(1+|x-y|\Big(\frac{1}{\rho(x)}+\frac{1}{\rho(y)}\Big)\Big)^N}
\frac{1}{|x-y|^{d-\alpha}}.\nonumber
\end{align}

In addition, using the upper bound of $p_t(x,y)$ again, we get
\begin{align*}
&\int_{|x-y|^2}^\infty|p_t(x,y)|t^{\frac{\alpha}{2}-1}dt\\
&\quad\lesssim\int_{|x-y|^2}^\infty t^{\frac{\alpha}{2}-\frac{d}{2}-1}\Big(1+\frac{\sqrt{t}}{\rho(x)}+\frac{\sqrt{t}}{\rho(y)}\Big)^{-N}dt
\nonumber\\
&\quad\leq\frac{1}{\Big(1+|x-y|\Big(\frac{1}{\rho(x)}+\frac{1}{\rho(y)}\Big)\Big)^N}\int_{|x-y|^2}^\infty
t^{\frac{\alpha}{2}-\frac{d}{2}-1}dt\\
&\quad\sim\frac{1}{\Big(1+|x-y|\Big(\frac{1}{\rho(x)}+\frac{1}{\rho(y)}\Big)\Big)^N}
\frac{1}{|x-y|^{d-\alpha}}.
\end{align*}
This together with \eqref{zong kernel} and \eqref{local estimate of kernel}, we prove the conclusion.
\end{proof}

We will prove Theorem \ref{quantitative matrix weighted estimates for fractional integrals associated with operators} via approximating $L^{-{\alpha}/{2}}$ by a dyadic operator. So let us recall some facts about dyadic grid. We call that a collection of cubes $\mathcal{D}$ in $\mathbb{R}^d$ is a dyadic grid if it satisfies:\\
{\rm (1)} for any $Q\in\mathcal{D}$, $l(Q)=2^k$ for some $k\in\mathbb{Z}$;\\
{\rm (2)} for any $Q_1, Q_2\in\mathcal{D}$, $Q_1\cap Q_2=\{Q_1,Q_2,\emptyset\}$;\\
{\rm (3)} for any $k\in\mathbb{Z}$, the family $\mathcal{D}_k=\{Q\in\mathcal{D}:l(Q)=2^k\}$ forms a partition of $\mathbb{R}^d$.

Sparse families is an important sub-family of dyadic grids.
\begin{definition}\rm(\cite{Lerner2017})
Let $\eta\in(0,1)$. A subset $\mathcal{S}\subset\mathcal{D}$ is said to be an $\eta$-sparse family with $\eta\in(0,1)$ if for any cube $Q\in\mathcal{S}$, there exists a measurable subset $E_Q\subset Q$ such that $\eta|Q|\leq|E_Q|$, and the sets $\{E_Q\}_{Q\in\mathcal{S}}$ are mutually disjoint.
\end{definition}

The following lemma shows that any cubes in $\mathbb{R}^d$ can be approximated by cubes from a finite collection of dyadic grids.
\begin{lemma}\rm(\cite{Lerner})\label{dyadic grids}
For $t\in\{0,1/3\}^d$, let $\mathcal{D}^t=\{2^{-k}([0,1)^d+m+(-1)^{k}t):k\in\mathbb{Z},m\in\mathbb{Z}^d\}$. Then each $\mathcal{D}^t$ is a dyadic grid, and given any cube $Q\subset\mathbb{R}^d$, there exist $t$ and $Q^t\in\mathcal{D}^t$ such that $Q\subset Q^t$ and $l_{Q^t}\leq6l_Q$.
\end{lemma}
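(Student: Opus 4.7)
The plan is to verify the two assertions in order. For the first, I check the three defining properties of a dyadic grid for each $\mathcal{D}^t$. Property (1), that side lengths are powers of $2$, is immediate from the form $2^{-k}(\cdots)$. Property (3), that at each fixed scale the cubes form a partition of $\mathbb{R}^d$, also follows directly since $\{2^{-k}([0,1)^d + m + (-1)^k t) : m \in \mathbb{Z}^d\}$ is simply a translate of the lattice $2^{-k}\mathbb{Z}^d$ by a fixed vector.

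The critical point is property (2), the nested intersection property: at the passage from scale $k$ to scale $k-1$ (doubling the side length), the small-scale partition must refine the large-scale one. Rescaling by $2^{k-1}$ so that the large cubes have unit side, a direct computation shows that the offset between the small cubes (side $1/2$) and the large cubes equals $\tfrac{3}{2}(-1)^k t$, which must lie in $\tfrac{1}{2}\mathbb{Z}^d$ for compatibility. This reduces to the integrality condition $3t \in \mathbb{Z}^d$, explaining precisely why $t \in \{0, 1/3\}^d$ is the right family and why the sign alternation $(-1)^k$ is needed (so the offset at each transition is $3t$ rather than the $t$ one would obtain from a constant shift).

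For the second assertion, given a cube $Q$ of side length $l_Q$, I would first choose an integer $k$ with $2^{-k} \in [3 l_Q, 6 l_Q]$ (such $k$ exists because the interval has ratio $2$), and then seek $t \in \{0, 1/3\}^d$ and $Q^t \in \mathcal{D}^t$ of side $2^{-k} \leq 6 l_Q$ containing $Q$. Projecting coordinate by coordinate reduces matters to a one-dimensional claim: an interval of length $l_Q$ fits inside some cell of the shifted lattice of step $2^{-k}$ unless its left endpoint lands in a ``bad'' subinterval of relative length $l_Q / 2^{-k} \leq 1/3$; since the two available shifts $\{0, 1/3\}$ are separated by exactly $1/3$, at least one of them must avoid this bad set. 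Choosing such $t_i \in \{0, 1/3\}$ independently in each coordinate assembles the desired $t$ and $Q^t$.

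The main obstacle is the bookkeeping in the nested-intersection verification of part 1, where one must track how the alternating sign $(-1)^k$ converts the shift discrepancy into the clean integer condition $3t \in \mathbb{Z}^d$; once this is done, part 2 is a standard ``one-third trick'' pigeonhole argument carried out coordinate by coordinate, and the numerical bound $6 l_Q$ falls out of the choice of scale.
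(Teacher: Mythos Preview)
The paper does not prove this lemma; it is quoted from \cite{Lerner} and used as a black box. Your argument is correct and is exactly the standard ``one-third trick'' proof found in that reference and in many other expositions of adjacent dyadic systems, so there is nothing further to compare.
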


Based on Lemma \ref{dyadic grids}, we show that $L^{-{\alpha}/{2}}$ is approximated by a dyadic operator.
\begin{proposition}\label{sparse domination for fractional}
Let $\theta\geq0$, $0<\alpha<d$, $1<q<\infty$ and $U,V$ be matrix weights. Then
\begin{align*}
&|\langle U^{1/q}L^{-{\alpha}/{2}}V^{-1/q}\vec{f},\vec{g}\rangle_{L^2}|\\
&\quad\lesssim\sum_{t\in\{0,1/3\}^d}
\sum_{Q\in\mathcal{D}^t}\frac{1}{\widetilde{\psi}_\theta(Q)|Q|^{1-\frac{\alpha}{d}}}\int_Q\int_Q
|\langle V(y)^{-1/q}f(y),U(x)^{1/q}\vec{g}(x)\rangle_{\mathbb{C}^n}|dxdy.
\end{align*}
\end{proposition}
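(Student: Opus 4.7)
The plan is to realize $L^{-\alpha/2}$ as an integral operator, to control its kernel pointwise via Lemma~\ref{another upper bound}, and to convert the resulting double integral into a sum over dyadic cubes through an annular decomposition combined with Lemma~\ref{dyadic grids}.

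First, I would write
\[
\langle U^{1/q}L^{-\alpha/2}V^{-1/q}\vec{f},\vec{g}\rangle_{L^2}=\int\!\!\int K(x,y)\,\langle V(y)^{-1/q}\vec{f}(y),U(x)^{1/q}\vec{g}(x)\rangle_{\mathbb{C}^n}\,dx\,dy,
\]
where $K(x,y)$ denotes the kernel of $L^{-\alpha/2}$, and apply Lemma~\ref{another upper bound} with $N\geq\theta$ to bound $|K(x,y)|$ pointwise.

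Next, I would split $\mathbb{R}^d\times\mathbb{R}^d$ into the dyadic annuli $A_k:=\{(x,y):2^k\leq|x-y|<2^{k+1}\}$, $k\in\mathbb{Z}$. For each $(x,y)\in A_k$, the auxiliary cube $R_{x,y}:=Q(x,2^{k+1})$ contains $y$, and Lemma~\ref{dyadic grids} supplies $t\in\{0,1/3\}^d$ and a cube $Q\in\mathcal{D}^t$ with $x,y\in Q$ and $l_Q\sim 2^k$; in particular $|x-y|^{d-\alpha}\sim|Q|^{1-\alpha/d}$. The extraction of $\widetilde{\psi}_\theta(Q)^{-1}$ from the decay factor uses that $\widetilde{\rho}(Q)=\sup_{z\in Q}\rho(z)$, so $\rho(x)^{-1}+\rho(y)^{-1}\geq 2\widetilde{\rho}(Q)^{-1}$ whenever $x,y\in Q$; combined with $|x-y|\gtrsim l_Q$ on $A_k$ this gives
\[
1+|x-y|\bigl(\rho(x)^{-1}+\rho(y)^{-1}\bigr)\gtrsim 1+\tfrac{l_Q}{\widetilde{\rho}(Q)},
\]
so that the decay factor is bounded above by $\widetilde{\psi}_N(Q)^{-1}\leq\widetilde{\psi}_\theta(Q)^{-1}$.

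Assembling the estimates, the contribution of $A_k\cap(Q\times Q)$ is dominated by $\widetilde{\psi}_\theta(Q)^{-1}|Q|^{-(1-\alpha/d)}$ times the inner double integral of $|\langle V(y)^{-1/q}\vec{f}(y),U(x)^{1/q}\vec{g}(x)\rangle_{\mathbb{C}^n}|$ over $Q\times Q$, after which the restriction to $A_k$ is dropped. Summing over $k$ introduces only a bounded multiplicative loss because each $Q\in\mathcal{D}^t$ appears for only a bounded number of scales (Lemma~\ref{dyadic grids} applied to $R_{x,y}$ forces $l_{Q^t}$ to lie in one of at most three values depending on $k$), so the final summation over $t\in\{0,1/3\}^d$ and $Q\in\mathcal{D}^t$ yields the stated bound. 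The main subtlety is this finite-overlap bookkeeping between the annular and dyadic decompositions, together with the matching choice $N\geq\theta$; both are combinatorial once the pointwise kernel estimate of Lemma~\ref{another upper bound} is in hand.
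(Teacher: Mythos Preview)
Your proposal is correct and follows essentially the same route as the paper: pointwise kernel control via Lemma~\ref{another upper bound}, an annular decomposition in $|x-y|$, replacement of each annulus by a dyadic cube from Lemma~\ref{dyadic grids} of comparable scale, and a finite-overlap count when summing over scales. The only cosmetic differences are that the paper uses cube-based annuli $Q(x,2^k)\setminus Q(x,2^{k-1})$ and passes through $\widetilde{\psi}_\theta(Q(x,2^k))$ before transferring to $\widetilde{\psi}_\theta(Q^t)$, whereas you use Euclidean annuli and bound $\widetilde{\psi}_\theta(Q)$ directly on the dyadic cube via $\rho(x)^{-1}+\rho(y)^{-1}\geq 2\widetilde{\rho}(Q)^{-1}$.
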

\begin{proof}
Taking into account that $W$ is a self-adjoint matrix and Lemma \ref{another upper bound}, we have
\begin{align*}
&|\langle U^{1/q}L^{-{\alpha}/{2}}V^{-1/q}\vec{f},\vec{g}\rangle_{L^2}|\\
&\quad\lesssim\int_{\mathbb{R}^d}\sum_{k\in\mathbb{Z}}\int_{Q(x,2^k)\backslash Q(x,2^{k-1})}
\frac{|\langle V(y)^{-1/q}\vec{f}(y),U(x)^{1/q}\vec{g}(x)\rangle_{\mathbb{C}^n}|}
{(1+|x-y|/\rho(y))^\theta|x-y|^{d-\alpha}}dydx.
\end{align*}
For $y\in Q(x,2^k)\backslash Q(x,2^{k-1})$, note that $|x-y|\geq2^{k-1}$ and $\rho(y)\leq\tilde{\rho}(Q(x,2^k))$, we get
\begin{align*}
\frac{1}{(1+|x-y|/\rho(y))^\theta|x-y|^{d-\alpha}}\lesssim
\frac{1}{2^{(k-1)(d-\alpha)}\widetilde{\psi}_\theta(Q(x,2^k))}.
\end{align*}
Now for each $k\in \mathbb{Z}$, Lemma \ref{dyadic grids} shows that there are $t\in\{0,1/3\}^d$ and $Q_t\in \mathcal{D}^t$ such that $Q(x,2^k)\subset Q^t$ and
\begin{align*}
2^{k+1}=l_{Q(x,2^k)}\leq l_{Q^t}\leq6l_{Q(x,2^k)}=12\cdot2^k,
\end{align*}
which implies that
\begin{align*}
\frac{2^{-5(d-\alpha)}}{2^{(k-1)(d-\alpha)}}=\frac{1}{2^{(k+4)(d-\alpha)}}\leq
\frac{1}{|Q^t|^{1-\frac{\alpha}{d}}},
\end{align*}
and
\begin{equation*}
\frac{1}{\widetilde{\psi}_\theta(Q(x,2^k))}\lesssim\frac{1}{\widetilde{\psi}_\theta(Q^t)}.
\end{equation*}
Then
\begin{align*}
&|\langle U^{1/q}L^{-{\alpha}/{2}}V^{-1/q}\vec{f},\vec{g}\rangle_{L^2}|\\
&\quad\lesssim\int_{\mathbb{R}^d}
\sum_{k\in\mathbb{Z}}\frac{1}{\widetilde{\psi}_\theta(Q(x,2^k))2^{(k-1)(d-\alpha)}}\int_{Q(x,2^k)}
|\langle V(y)^{-1/q}\vec{f}(y),U(x)^{1/q}\vec{g}(x)\rangle_{\mathbb{C}^n}|dydx\\
&\quad\lesssim\int_{\mathbb{R}^d}
\sum_{k\in\mathbb{Z}}\sum_{t\in\{0,1/3\}^d}\underset{2^{k+1}\leq l_Q\leq2^{k+4}}{\sum_{Q\in\mathcal{D}^t}}\int_{Q(x,2^k)}
|\langle V(y)^{-1/q}\vec{f}(y),U(x)^{1/q}\vec{g}(x)\rangle_{\mathbb{C}^n}|dy\\
&\qquad\times\frac{\chi_Q(x)}{\widetilde{\psi}_\theta(Q)|Q|^{1-\frac{\alpha}{d}}}dx\\
&\quad\lesssim\sum_{t\in\{0,1/3\}^d}\sum_{Q\in\mathcal{D}^t}
\frac{1}{\widetilde{\psi}_\theta(Q)|Q|^{1-\frac{\alpha}{d}}}\int_Q\int_Q
|\langle V(y)^{-1/q}\vec{f}(y),U(x)^{1/q}\vec{g}(x)\rangle_{\mathbb{C}^n}|dxdy.
\end{align*}
\end{proof}

In order to prove Theorem \ref{quantitative matrix weighted estimates for fractional integrals associated with operators}, we define the following class of weights. Let $\mathcal{Q}$ be a collection of dyadic cubes. We say that $W\in \mathcal{A}_{p,q}^{\mathcal{Q}}$ if
\begin{equation*}
[W]_{\mathcal{A}_{p,q}^{\mathcal{Q}}}:=\sup_{Q\in\mathcal{Q}}\frac{1}{|Q|}\int_Q
\Big(\frac{1}{|Q|}\int_Q|W(x)^{1/q}W(y)^{-1/q}|_{op}^{p'}dy\Big)^{q/p'}dx<\infty.
\end{equation*}

\begin{proof}[Proof of Theorem \ref{quantitative matrix weighted estimates for fractional integrals associated with operators}]
Given a matrix weight $W$, observe that $L^{-{\alpha}/{2}}: L^p(W^{p/q})\rightarrow L^q(W)$ if and only if
\begin{equation*}
W^{1/q}L^{-{\alpha}/{2}}W^{-1/q}: L^p(\mathbb{R}^d,\mathbb{C}^n)\rightarrow L^p(\mathbb{R}^d,\mathbb{C}^n).
\end{equation*}
Hence, it suffices to show that
\begin{equation*}
|\langle W^{1/q}L^{-{\alpha}/{2}}W^{-1/q}\vec{f},\vec{g} \rangle_{L^2}|\lesssim[W]_{A_{p,q}^{\rho,\theta/3K}}^{\frac{p'}{q}(1-\frac{\alpha}{d})+\frac{1}{q'}}
\|\vec{f}\|_{L^p}\|\vec{g}\|_{L^{q'}}.
\end{equation*}
By Proposition \ref{sparse domination for fractional}, fix a dyadic grid $\mathcal{D}$, it is enough to prove
\begin{align*}
&\sum_{Q\in\mathcal{D}}\frac{1}{\widetilde{\psi}_\theta(Q)|Q|^{1-\frac{\alpha}{d}}}\int_Q\int_Q
|\langle W(y)^{-1/q}\vec{f}(y),W(x)^{1/q}\vec{g}(x)\rangle_{\mathbb{C}^n}|dxdy\\
&\quad\lesssim[W]_{A_{p,q}^{\rho,\theta/3K}}^{\frac{p'}{q}(1-\frac{\alpha}{d})+\frac{1}{q}}
\|\vec{f}\|_{L^p}\|\vec{g}\|_{L^{q'}}.
\end{align*}

For $r\geq0$. Here and in the following, we denote $\mathcal{Q}_r:=\{Q\in\mathcal{D}:\widetilde{\psi}_\theta(Q)\sim2^{r\theta}\}$. Then
\begin{align*}
&\sum_{Q\in\mathcal{D}}\frac{1}{\widetilde{\psi}_\theta(Q)|Q|^{1-\frac{\alpha}{d}}}\int_Q\int_Q
|\langle W(y)^{-1/q}\vec{f}(y),W(x)^{1/q}\vec{g}(x)\rangle_{\mathbb{C}^n}|dxdy\\
&\quad=\sum_{r\geq0}\sum_{Q\in\mathcal{Q}_r}\frac{1}{\widetilde{\psi}_\theta(Q)|Q|^{1-\frac{\alpha}{d}}}
\int_Q\int_Q|\langle W(y)^{-1/q}\vec{f}(y),W(x)^{1/q}\vec{g}(x)\rangle_{\mathbb{C}^n}|dxdy\\
&\quad\sim\sum_{r\geq0}2^{-r\theta}\sum_{Q\in\mathcal{Q}_r}\frac{1}{|Q|^{1-\frac{\alpha}{d}}}
\int_Q\int_Q|\langle W(y)^{-1/q}\vec{f}(y),W(x)^{1/q}\vec{g}(x)\rangle_{\mathbb{C}^n}|dxdy.
\end{align*}
The operator
\begin{equation*}
\sum_{Q\in\mathcal{Q}_r}\frac{1}{|Q|^{1-\frac{\alpha}{d}}}
\int_Q\int_Q|\langle W(y)^{-1/q}\vec{f}(y),W(x)^{1/q}\vec{g}(x)\rangle_{\mathbb{C}^n}|dxdy
\end{equation*}
is very similar to the following operator given in \cite{IM} except that $\mathcal{Q}_r=\mathcal{D}$
\begin{equation*}
\sum_{Q\in\mathcal{D}}\frac{1}{|Q|^{1-\frac{\alpha}{d}}}
\int_Q\int_Q|\langle W(y)^{-1/q}\vec{f}(y),W(x)^{1/q}\vec{g}(x)\rangle_{\mathbb{C}^n}|dxdy,
\end{equation*}
where the authors use it to obtain the quantitative matrix weighted estimates in Theorem \ref{quantitative matrix weighted for classical fractional}. Now for $W\in\mathcal{A}_{p,q}^{\mathcal{Q}_r}$, by carefully following a similar scheme in the proof of Theorem \ref{quantitative matrix weighted for classical fractional}, we obtain
\begin{align*}
&\sum_{Q\in\mathcal{Q}_r}\frac{1}{|Q|^{1-\frac{\alpha}{d}}}\int_Q\int_Q
|\langle W(y)^{-1/q}\vec{f}(y),W(x)^{1/q}\vec{g}(x)\rangle_{\mathbb{C}^n}|dxdy\\
&\quad\lesssim[W]_{\mathcal{A}_{p,q}^{\mathcal{Q}_r}}^{\frac{p'}{q}(1-\frac{\alpha}{d})+\frac{1}{q'}}
\|\vec{f}\|_{L^p}\|\vec{g}\|_{L^{q'}}.
\end{align*}
Note that for $Q\in\mathcal{Q}_r$,
\begin{align*}
\frac{1}{|Q|}\int_Q\Big(\frac{1}{|Q|}\int_Q|W(x)^{1/q}W(y)^{-1/q}|_{op}^{p'}dy\Big)^{q/p'}dx
\leq[W]_{\tilde{\mathcal{A}}_{p,q}}^{\rho,\theta/K}
2^{r(\theta/K+\theta q/(Kp'))}.
\end{align*}
Thus, $[W]_{\mathcal{A}_{p,q}^{\mathcal{Q}_r}}\leq[W]_{\tilde{\mathcal{A}}_{p,q}^{\rho,\theta/K}}
2^{r(\theta/K+\theta q/(Kp'))}$.
It follows that
\begin{align*}
&\sum_{Q\in\mathcal{D}}\frac{1}{\widetilde{\psi}_\theta(Q)|Q|^{1-\frac{\alpha}{d}}}\int_Q\int_Q
|\langle W(y)^{-1/q}\vec{f}(y),W(x)^{1/q}\vec{g}(x)\rangle_{\mathbb{C}^n}|dxdy\\
&\quad\lesssim\sum_{r\geq0}2^{-r\theta}\Big([W]_{\tilde{\mathcal{A}}_{p,q}^{\rho,\theta/K}}
2^{r(\theta/K+\theta q/(Kp'))}\Big)^{\frac{p'}{q}(1-\frac{\alpha}{d})+\frac{1}{q'}}\|\vec{f}\|_{L^p}\|\vec{g}\|_{L^{q'}}\\
&\quad\lesssim\sum_{r\geq0}2^{-r\theta/2}
[W]_{\mathcal{A}_{p,q}^{\rho,\theta/(3K)}}^{\frac{p'}{q}(1-\frac{\alpha}{d})+\frac{1}{q'}}
\|\vec{f}\|_{L^p}\|\vec{g}\|_{L^{q'}}\\
&\quad\sim[W]_{\mathcal{A}_{p,q}^{\rho,\theta/(3K)}}^{\frac{p'}{q}(1-\frac{\alpha}{d})+\frac{1}{q'}}
\|\vec{f}\|_{L^p}\|\vec{g}\|_{L^{q'}}.
\end{align*}
\end{proof}

\begin{proposition}\label{sparse domination for fractional maximal operator}
Let $\theta\geq0$, $0\leq\alpha<d$, $1<q<\infty$ and $U,V$ be matrix weights. Then for any $x\in\mathbb{R}^d$,
\begin{equation*}
M_{U,V,\psi_\theta,\alpha}\vec{f}(x)\lesssim\sum_{t\in\{0,1/3\}^d}
M_{U,V,\widetilde{\psi}_\theta,\alpha}^{\mathcal{D}^t}\vec{f}(x),
\end{equation*}
where
\begin{equation*}
M_{U,V,\widetilde{\psi}_\theta,\alpha}^{\mathcal{D}}\vec{f}(x):=\underset{Q\ni x}{\sup_{Q\in\mathcal{D}}}
\frac{1}{(\widetilde{\psi}_\theta(Q)|Q|)^{1-\alpha/d}}\int_Q|U(x)^{1/q}V(y)^{-1/q}\vec{f}(y)|dy.
\end{equation*}
\end{proposition}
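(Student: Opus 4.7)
My plan is to use the three-grid trick (Lemma \ref{dyadic grids}) to dominate each arbitrary cube appearing in the supremum that defines $M_{U,V,\psi_\theta,\alpha}$ by a dyadic cube from one of the grids $\mathcal{D}^t$, while simultaneously replacing $\psi_\theta$ by the coarser auxiliary factor $\widetilde{\psi}_\theta$.

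Fix $x\in\mathbb{R}^d$ and an arbitrary cube $Q$ containing $x$. By Lemma \ref{dyadic grids} there exist $t\in\{0,1/3\}^d$ and $Q^t\in\mathcal{D}^t$ with $Q\subset Q^t$ and $l_{Q^t}\leq 6 l_Q$; in particular $x\in Q^t$ and $|Q^t|\leq 6^d |Q|$. The crucial observation is that the center $x_Q$ of $Q$ lies in $Q^t$, so $\tilde{\rho}(Q^t)=\sup_{y\in Q^t}\rho(y)\geq\rho(x_Q)$. Combining this with $l_{Q^t}\leq 6 l_Q$ gives
$$1+\frac{l_{Q^t}}{\tilde{\rho}(Q^t)}\leq 1+\frac{6l_Q}{\rho(x_Q)}\leq 6\Big(1+\frac{l_Q}{\rho(x_Q)}\Big),$$
hence $\widetilde{\psi}_\theta(Q^t)\leq 6^\theta\psi_\theta(Q)$. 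Together with the volume comparison and $1-\alpha/d>0$, this yields $(\widetilde{\psi}_\theta(Q^t)|Q^t|)^{1-\alpha/d}\lesssim(\psi_\theta(Q)|Q|)^{1-\alpha/d}$ with a constant depending only on $d,\alpha,\theta$.

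Since $Q\subset Q^t$ and the integrand $|U(x)^{1/q}V(y)^{-1/q}\vec{f}(y)|$ is nonnegative, enlarging the domain of integration and applying the comparison above gives
\begin{align*}
&\frac{1}{(\psi_\theta(Q)|Q|)^{1-\alpha/d}}\int_Q|U(x)^{1/q}V(y)^{-1/q}\vec{f}(y)|\,dy \\
&\quad\lesssim\frac{1}{(\widetilde{\psi}_\theta(Q^t)|Q^t|)^{1-\alpha/d}}\int_{Q^t}|U(x)^{1/q}V(y)^{-1/q}\vec{f}(y)|\,dy,
\end{align*}
and since $Q^t\in\mathcal{D}^t$ contains $x$, the right-hand side is at most $M^{\mathcal{D}^t}_{U,V,\widetilde{\psi}_\theta,\alpha}\vec{f}(x)$. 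Taking the supremum over all cubes $Q\ni x$ on the left and summing trivially over the $2^d$ choices of $t$ on the right yields the desired pointwise inequality.

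I do not anticipate any genuine obstacle. The only substantive point is the estimate $\widetilde{\psi}_\theta(Q^t)\lesssim\psi_\theta(Q)$, which follows from the trivial inclusion $x_Q\in Q^t$ and requires neither the critical-radius estimate \eqref{critical radius function} nor the comparison \eqref{rao dong}. It is worth noting that the direction of the inequality is favourable precisely because we are bounding the larger quantity $\psi_\theta$ (involving $\rho(x_Q)$) by a quantity built from $\tilde{\rho}(Q^t)$, which is only larger than $\rho(x_Q)$; this is the reason the auxiliary factor $\widetilde{\psi}_\theta$ is the natural one to appear on the dyadic side.
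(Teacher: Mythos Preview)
Your proof is correct and follows essentially the same approach as the paper: both apply Lemma \ref{dyadic grids} to pass from an arbitrary cube $Q$ to a dyadic cube $Q^t$, use $x_Q\in Q^t$ to get $\rho(x_Q)\leq\tilde{\rho}(Q^t)$, and combine this with the side-length comparison to bound the normalizing factor. Your write-up is slightly more detailed in making the constant explicit and spelling out the inequality $\widetilde{\psi}_\theta(Q^t)\leq 6^\theta\psi_\theta(Q)$, but the argument is the same.
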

\begin{proof}
We still employ the notations given in the proof of Proposition \ref{sparse domination for fractional}. For any cube $Q:=Q(x_Q,r_Q)$, by Lemma \ref{dyadic grids}, there exist $t\in\{0,1/3\}^d$ and $Q^t\in\mathcal{D}^t$ such that $Q\subset Q^t$ with $l_{Q^t}\leq12r_Q$. Observe that $\rho(x_Q)\leq\tilde{\rho}(Q^t)$, then
\begin{align*}
&\frac{1}{(\psi_\theta(Q)|Q|)^{1-\alpha/d}}\int_Q|U(x)^{1/q}V(y)^{-1/q}\vec{f}(y)|dy\\
&\quad\lesssim\frac{1}{(\widetilde{\psi}_\theta(Q^t)|Q^t|)^{1-\alpha/d}}\int_{Q^t}
|U(x)^{1/q}V(y)^{-1/q}\vec{f}(y)|dy\\
&\quad\leq M_{U,V,\widetilde{\psi}_\theta,\alpha}^{\mathcal{D}^t}\vec{f}(x).
\end{align*}
Thus,
\begin{equation*}
M_{U,V,\psi_\theta,\alpha}\vec{f}(x)\lesssim\sum_{t\in\{0,1/3\}^d}
M_{U,V,\widetilde{\psi}_\theta,\alpha}^{\mathcal{D}^t}\vec{f}(x).
\end{equation*}
\end{proof}

\begin{proof}[Proof of Theorem \ref{quantitative matrix weighted estimates for fractional maximal operator associated with operators}]
(1). For $x\in\mathbb{R}^d$, fix a cube $Q\ni x$. It is direct that
\begin{align*}
\Big|\frac{\chi_Q(x)}{(\psi_\theta(Q)|Q|)^{1-\alpha/d}}\int_QW(x)^{1/q}\vec{f}(y)dy\Big|&\leq
\frac{\chi_Q(x)}{(\psi_\theta(Q)|Q|)^{1-\alpha/d}}\int_Q\Big|W(x)^{1/q}\vec{f}(y)\Big|dy\\
&\leq M_{W,\psi_\theta,\alpha}(W^{1/q}\vec{f}).
\end{align*}
By the assumption, we have
\begin{equation*}
\sup_Q\Big\|\frac{\chi_Q}{(\psi_\theta(Q)|Q|)^{1-\alpha/d}}\int_Q\vec{f}(y)dy\Big\|_{L^q(W)}
\lesssim\|\vec{f}\|_{L^p(W^{p/q})}.
\end{equation*}
Thus, (1) follows by Theorem \ref{character weight via average operator}, which will be proved in Section 5.
\medskip

(2). By Proposition \ref{sparse domination for fractional maximal operator}, fix a dyadic grid $\mathcal{D}$, it suffices to prove that
\begin{equation*}
\|M_{W,\widetilde{\psi}_\theta,\alpha}^{\mathcal{D}}\vec{f}\|_{L^q}\lesssim
[W]_{A_{p,q}^{\rho,\theta/(3K)}}^{p'(1-\alpha/d)/q}\|\vec{f}\|_{L^p},
\end{equation*}
where $M_{W,\widetilde{\psi}_\theta,\alpha}^{\mathcal{D}}:=M_{W,W,\widetilde{\psi}_\theta,\alpha}^{\mathcal{D}}$. Note that
\begin{equation*}
M_{W,\widetilde{\psi}_\theta,\alpha}^{\mathcal{D}}\vec{f}(x)\leq\sum_{r\geq0}2^{-r\theta(1-\alpha/d)}
\sup_{Q\in\mathcal{Q}_r}\frac{1}{|Q|^{1-\alpha/d}}\int_Q|W(x)^{1/q}W(y)^{-1/q}\vec{f}(y)|dy.
\end{equation*}
Following a similar scheme in the proof of Theorem \ref{quantitative matrix weighted for classical fractional}, we have
\begin{align*}
&\Big\|\sup_{Q\in\mathcal{Q}_r}
\frac{1}{|Q|^{1-\alpha/d}}\int_Q|W(x)^{1/q}W(y)^{-1/q}\vec{f}(y)|dy\Big\|_{L^q}\\
&\quad\lesssim[W]_{\mathcal{A}_{p,q}^{\mathcal{Q}_r}}^{p'(1-\alpha/d)/q}\|\vec{f}\|_{L^p}\\
&\quad\leq([W]_{\tilde{\mathcal{A}}_{p,q}^{\rho,\theta/K}}2^{r(\theta/K+\theta q/(Kp'))})^{p'(1-\alpha/d)/q}\|\vec{f}\|_{L^p}\\
&\quad\lesssim[W]_{\mathcal{A}_{p,q}^{\rho,\theta/(3K)}}^{p'(1-\alpha/d)/q}2^{r\theta(p'/(Kq)+1/K)(1-\alpha/d)}
\|\vec{f}\|_{L^p}.
\end{align*}
Hence,
\begin{align*}
\|M_{W,\psi_\theta,\alpha}^{\mathcal{D}}\vec{f}\|_{L^q}&\lesssim
[W]_{\mathcal{A}_{p,q}^{\rho,\theta/(3K)}}^{p'(1-\alpha/d)/q}\|\vec{f}\|_{L^p}
\sum_{r\geq0}2^{-r\theta(1-\alpha/d)/2}\leq
[W]_{\mathcal{A}_{p,q}^{\rho,\theta/(3K)}}^{p'(1-\alpha/d)/q}\|\vec{f}\|_{L^p}.
\end{align*}
\end{proof}

\section{two-weight estimates for fractional type integrals associated with operators}
The goal of this section is to prove Theorem \ref{two weight inequality for fractional integrals associated with operators}, Theorem \ref{two weight inequality for fractional maximal operators associated with operators} and Theorem \ref{boundedness auxiliary fractional maximal operator}.

\begin{proof}[Proof of Theorem \ref{two weight inequality for fractional integrals associated with operators}]
By Fatou's lemma, we only need to prove that
\begin{equation*}
|\langle U^{1/q}L^{-{\alpha}/{2}}V^{-1/q}\vec{f},\vec{g}\rangle_{L^2}|\lesssim
[U,V]_{p,q,\alpha,\psi_\theta,\Phi,\Psi}\|\vec{f}\|_{L^p}\|\vec{g}\|_{L^{q'}},
\end{equation*}
where $\vec{f}$, $\vec{g}$ are bounded functions of compact support. Applying Proposition \ref{sparse domination for fractional}, we have
\begin{align*}
&|\langle U^{1/q}L^{-{\alpha}/{2}}V^{-1/q}\vec{f},\vec{g}\rangle_{L^2}|\\
&\quad\lesssim\sum_{t\in\{0,1/3\}^d}\sum_{Q\in\mathcal{D}^t}
\frac{|Q|^{\alpha/d-1}}{\widetilde{\psi}_\theta(Q)}\int_Q\int_Q
|\langle U(x)^{1/q}V(y)^{-1/q}\vec{f}(y),\vec{g}(x)\rangle_{\mathbb{C}^n}|dxdy.
\end{align*}
Now, fix a dyadic grid $\mathcal{D}$, by the generalized H\"{o}lder's inequality and \eqref{rao dong}, we deduce that
\begin{align*}
&\sum_{Q\in\mathcal{D}}
\frac{|Q|^{\alpha/d-1}}{\widetilde{\psi}_\theta(Q)}\int_Q\int_Q
|\langle U(x)^{1/q}V(y)^{-1/q}\vec{f}(y),\vec{g}(x)\rangle_{\mathbb{C}^n}|dxdy\\
&\quad\lesssim\sum_{Q\in\mathcal{D}}\frac{|Q|^{\alpha/d+1}}{\widetilde{\psi}_\theta(Q)}
\Big\|\|U(x)^{1/q}V(y)^{-1/q}\|_{\Phi_y,Q}\Big\|_{\Psi_x,Q}
\|\vec{f}\|_{\bar{\Phi},Q}\|\vec{g}\|_{\bar{\Psi},Q}\\
&\quad=\sum_{Q\in\mathcal{D}}\frac{1}{\widetilde{\psi}_\theta(Q)}|Q|^{\alpha/d+1/q-1/p}
\Big\|\|U(x)^{1/q}V(y)^{-1/q}\|_{\Phi_y,Q}\Big\|_{\Psi_x,Q}|Q|^{1/p-1/q+1}
\|\vec{f}\|_{\bar{\Phi},Q}\|\vec{g}\|_{\bar{\Psi},Q}\\
&\quad\lesssim[U,V]_{p,q,\alpha,\psi_\theta,\Phi,\Psi}\sum_{Q\in\mathcal{D}}|Q|^{1/p-1/q+1}
\|\vec{f}\|_{\bar{\Phi},Q}\|\vec{g}\|_{\bar{\Psi},Q}.
\end{align*}

Let
\begin{equation*}
\mathcal{Q}^k:=\{Q\in\mathcal{D}:a^k<\|\vec{f}\|_{\bar{\Phi},Q}\leq a^{k+1}\},
\end{equation*}
where $a$ is to be determined later. We also let $\mathcal{S}^k$ be the disjoint, maximal collection of cubes $Q\in\mathcal{D}$ such that $a^k<\|\vec{f}\|_{\bar{\Phi},Q}$. Then
\begin{align*}
&\sum_{Q\in\mathcal{D}}|Q|^{1/p-1/q+1}
\|\vec{f}\|_{\bar{\Phi},Q}\|\vec{g}\|_{\bar{\Psi},Q}\\
&\quad\leq\sum_{k\in\mathbb{Z}}a^{k+1}\sum_{P\in\mathcal{Q}^k}|P|^{1/p-1/q+1}\|\vec{g}\|_{\bar{\Psi},P}\\
&\quad\leq\sum_{k\in\mathbb{Z}}a^{k+1}\sum_{Q\in\mathcal{S}^k}\sum_{P\in\mathcal{D}(Q)}
|P|^{1/p-1/q+1}\|\vec{g}\|_{\bar{\Psi},P}.
\end{align*}
By making use of the following equivalent norms between Orlicz average norm and Amemiya norm introduced in \cite{Na}:
\begin{equation*}
\|\vec{f}\|_{\bar{\Phi},Q}\leq\|\vec{f}\|_{\bar{\Phi},Q}'\leq2\|\vec{f}\|_{\bar{\Phi},Q},
\end{equation*}
where
\begin{equation*}
\|\vec{f}\|_{\bar{\Phi},Q}':=\inf\Big\{\lambda+\frac{\lambda}{|Q|}
\int_Q\Phi\Big(\frac{|\vec{f}(y)|}{\lambda}\Big)dy\Big\}.
\end{equation*}
For $Q\in\mathcal{S}^k$, we have
\begin{align*}
&\sum_{P\in\mathcal{D}(Q)}|P|^{1/p-1/q+1}\|\vec{g}\|_{\bar{\Psi},P}\\
&\quad\leq\sum_{P\in\mathcal{D}(Q)}|P|^{1/p-1/q+1}\Big(\lambda+\frac{\lambda}{|P|}
\int_P\bar{\Psi}\Big(\frac{|\vec{g}(x)|}{\lambda}\Big)dx\Big)\\
&\quad=\lambda\sum_{r=0}^\infty\underset{l_P=2^{-r}l_Q}{\sum_{P\in\mathcal{D}(Q)}}
\Big[2^{-(1/p-1/q+1)rd}|Q|^{1/p-1/q+1}\\
&\qquad+2^{-(1/p-1/q)rd}|Q|^{1/p-1/q}
\int_P\bar{\Psi}\Big(\frac{|\vec{g}(x)|}{\lambda}\Big)dx
\Big]\\
&\quad=\lambda\sum_{r=0}^\infty2^{-(1/p-1/q)rd}\Big(|Q|^{1/p-1/q+1}+|Q|^{1/p-1/q}
\int_Q\bar{\Psi}\Big(\frac{|\vec{g}(x)|}{\lambda}\Big)dx\Big)\\
&\quad\sim|Q|^{1/p-1/q+1}\Big(\lambda+\frac{\lambda}{|Q|}
\int_Q\bar{\Psi}\Big(\frac{|\vec{g}(x)|}{\lambda}\Big)dx\Big).
\end{align*}
Choose $\lambda$ such that
\begin{equation*}
\lambda+\frac{\lambda}{|Q|}
\int_Q\bar{\Psi}\Big(\frac{|\vec{g}(x)|}{\lambda}\Big)dx<2\|\vec{g}\|_{\bar{\Psi},Q}'\leq
4\|\vec{g}\|_{\bar{\Psi},Q}.
\end{equation*}
We arrive at
\begin{align*}
&\sum_{Q\in\mathcal{D}}
\frac{|Q|^{\alpha/d-1}}{\widetilde{\psi}_\theta(Q)}\int_Q\int_Q
|\langle U(x)^{1/q}V(y)^{-1/q}\vec{f}(y),\vec{g}(x)\rangle_{\mathbb{C}^n}|dxdy\\
&\quad\lesssim[U,V]_{p,q,\alpha,\psi_\theta,\Phi,\Psi}\sum_{k\in\mathbb{Z}}a^{k+1}\sum_{Q\in\mathcal{S}^k}
|Q|^{1/p-1/q+1}\|\vec{g}\|_{\bar{\Psi},Q}.
\end{align*}

Now we temporarily stop the proof of the conclusion, turn to show that $\mathcal{S}=\bigcup_k\mathcal{S}^k$ is sparse. We adapt the idea in \cite{CRMaPe}. Denote $\Omega_k=\bigcup_{Q\in\mathcal{S}^k}Q$. We are done if
\begin{equation}\label{sparse claim}
|Q\cap\Omega_{k+1}|\leq|Q|/2.
\end{equation}
To prove \eqref{sparse claim}. Note that
\begin{equation*}
|Q\cap\Omega_{k+1}|=\underset{P\subset Q}{\sum_{P\in\Omega_{k+1}}}|P|,~Q\in\mathcal{S}^k,
\end{equation*}
and
\begin{equation*}
a^{k+1}\leq\lambda+\frac{\lambda}{|P|}\int_P\bar{\Phi}\Big(\frac{|\vec{f}(x)|}{\lambda}\Big)dx,~
P\in\Omega_{k+1}.
\end{equation*}
Choose $\lambda_0$ such that
\begin{equation*}
\lambda_0+\frac{\lambda_0}{|Q|}\int_Q\bar{\Phi}\Big(\frac{|\vec{f}(x)|}{\lambda_0}\Big)dx<
4\|\vec{f}\|_{\bar{\Phi},Q}.
\end{equation*}
Hence
\begin{align*}
\underset{P\subset Q}{\sum_{P\in\Omega_{k+1}}}|P|&\leq\underset{P\subset Q}{\sum_{P\in\Omega_{k+1}}}
a^{-k-1}\Big(\lambda_0|P|+\lambda_0\int_P\bar{\Phi}\Big(\frac{|\vec{f}(x)|}{\lambda_0}\Big)dx\Big)\\
&\leq a^{-k-1}\Big(\lambda_0|Q|+\lambda_0\int_Q\bar{\Phi}\Big(\frac{|\vec{f}(x)|}{\lambda_0}\Big)dx\Big)\\
&\leq\frac{2^{d+2}|Q|}{a^{k+1}}\|\vec{f}\|_{\bar{\Phi},\hat{Q}}\leq|Q|/2,
\end{align*}
where $\hat{Q}$ is the dyadic parent of $Q$ and $a=2^{d+3}$. This proves \eqref{sparse claim}.

We return to prove our desired conclusion. Let
\begin{equation*}
\beta/d=1/p-1/s,~ \gamma/d=1/q'-1/s'.
\end{equation*}
Define
\begin{equation*}
M_{A}^\beta f(x):=\sup_{Q\ni x}|Q|^{\beta/d}\|f\|_{A,Q},
\end{equation*}
where $A$ is a Young function. It is known that
\begin{equation*}
M_{A}^\beta :L^p(\mathbb{R}^d)\rightarrow L^s(\mathbb{R}^d),~A\in B_{p,s},
\end{equation*}
see \cite{CrMo}. Since $\mathcal{S}$ is sparse, we have
\begin{align*}
&\sum_{Q\in\mathcal{D}}
\frac{|Q|^{\alpha/d-1}}{\widetilde{\psi}_\theta(Q)}\int_Q\int_Q
|\langle U(x)^{1/q}V(y)^{-1/q}\vec{f}(y),\vec{g}(x)\rangle_{\mathbb{C}^n}|dxdy\\
&\quad\lesssim a[U,V]_{p,q,\alpha,\psi_\theta,\Phi,\Psi}\sum_{Q\in\mathcal{S}}|Q|^{\beta/d+1/s}
\|\vec{f}\|_{\bar{\Phi},Q}|Q|^{\gamma/d+1/s'}\|\vec{g}\|_{\bar{\Psi},Q}\\
&\quad\lesssim[U,V]_{p,q,\alpha,\psi_\theta,\Phi,\Psi}\sum_{Q\in\mathcal{S}}|Q|^{\beta/d}
\|\vec{f}\|_{\bar{\Phi},Q}|Q|^{\gamma/d}\|\vec{g}\|_{\bar{\Psi},Q}|E_Q|\\
&\quad\leq[U,V]_{p,q,\alpha,\psi_\theta,\Phi,\Psi}\int_{\mathbb{R}^d}M_{\bar{\Phi}}^\beta(\vec{f})(x)
M_{\bar{\Psi}}^\gamma(\vec{g})(x)dx\\
&\quad\leq[U,V]_{p,q,\alpha,\psi_\theta,\Phi,\Psi}\|M_{\bar{\Phi}}^\beta\|_{L^s}
\|M_{\bar{\Psi}}^\gamma\|_{L^{s'}}\\
&\quad\lesssim[U,V]_{p,q,\alpha,\psi_\theta,\Phi,\Psi}\|\vec{f}\|_{L^p}\|\vec{g}\|_{L^{q'}}.
\end{align*}
\end{proof}

To prove Theorem \ref{two weight inequality for fractional maximal operators associated with operators}, we first show that the case $1/p-1/q>\alpha/d$ is trivial.
\begin{proposition}
Let $\theta\geq0$, $0<\alpha<d$, $1<p<q<\infty$ and $1/p-1/q>\alpha/d$. Assume that $M_{U,V,\psi_\theta,\alpha}:L^p\rightarrow L^q$ and $V^{1/q}\in L_{loc}^p$. Then $U(x)=0$ for a.e. $x\in\mathbb{R}^d$.
\end{proposition}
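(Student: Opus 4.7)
The plan is to test the assumed boundedness of $M_{U,V,\psi_\theta,\alpha}$ on elementary functions of the form $\vec{f}(y)=V(y)^{1/q}\vec{e}\,\chi_Q(y)$, where $\vec{e}\in\mathbb{C}^n$ is a unit vector and $Q$ is a cube that will eventually be shrunk to a single point. The hypothesis $V^{1/q}\in L_{loc}^p$ ensures that $\vec{f}\in L^p$ since $|V(y)^{1/q}\vec{e}|\le|V(y)^{1/q}|_{op}$. For $x\in Q$, restricting the defining supremum in $M_{U,V,\psi_\theta,\alpha}\vec{f}(x)$ to the cube $Q$ itself and carrying out the integral gives the pointwise lower bound
\[
M_{U,V,\psi_\theta,\alpha}\vec{f}(x)\ \ge\ \frac{|Q|^{\alpha/d}}{\psi_\theta(Q)^{1-\alpha/d}}\,|U(x)^{1/q}\vec{e}|\,\chi_Q(x).
\]

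Substituting this bound into the assumed $L^p\to L^q$ inequality, taking $q$th roots, and rewriting both integrals as averages over $Q$ produces the key estimate
\[
\frac{|Q|^{\alpha/d+1/q-1/p}}{\psi_\theta(Q)^{1-\alpha/d}}\Big(\tfrac{1}{|Q|}\int_Q|U(x)^{1/q}\vec{e}|^q\,dx\Big)^{1/q}\ \lesssim\ \Big(\tfrac{1}{|Q|}\int_Q|V(y)^{1/q}\vec{e}|^p\,dy\Big)^{1/p},
\]
with an implicit constant equal to the operator norm of $M_{U,V,\psi_\theta,\alpha}$ and independent of $Q$ and $\vec{e}$.

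Next I will contract $Q$ to a common Lebesgue point. The estimate $|U(x)^{1/q}\vec{e}|^q\le|U(x)|_{op}$ together with the local integrability of the entries of the matrix weight $U$ puts $|U^{1/q}\vec{e}|^q$ in $L_{loc}^1$, and $|V^{1/q}\vec{e}|^p\in L_{loc}^1$ by the hypothesis $V^{1/q}\in L_{loc}^p$. Fixing a countable dense subset $\mathcal{E}$ of the unit sphere of $\mathbb{C}^n$, almost every $x_0\in\mathbb{R}^d$ is simultaneously a Lebesgue point of $|U^{1/q}\vec{e}|^q$ and of $|V^{1/q}\vec{e}|^p$ for every $\vec{e}\in\mathcal{E}$. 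Moreover $\psi_\theta(Q)\to 1$ as $Q$ shrinks to such an $x_0$, since $l_Q\to 0$ and the critical-radius inequality \eqref{critical radius function} shows that $\rho$ is locally bounded below by a positive constant.

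Finally, the hypothesis $1/p-1/q>\alpha/d$ is precisely the statement that the exponent $\alpha/d+1/q-1/p$ is strictly negative, so $|Q|^{\alpha/d+1/q-1/p}\to\infty$ as $Q$ contracts to $x_0$. Passing to the limit in the key estimate along such shrinking cubes forces the left-hand side to blow up unless $|U(x_0)^{1/q}\vec{e}|=0$, while the right-hand side converges to the finite number $|V(x_0)^{1/q}\vec{e}|$. Hence $U(x_0)^{1/q}\vec{e}=0$ for every $\vec{e}\in\mathcal{E}$ at a.e.\ $x_0$, and by density of $\mathcal{E}$ this yields $U(x_0)=0$ for a.e.\ $x_0$, as claimed. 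The only mildly delicate point is the simultaneous Lebesgue-point selection across the countable family $\mathcal{E}$, which is a routine measure-theoretic maneuver because the exceptional set is a countable union of null sets.
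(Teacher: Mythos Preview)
Your proof is correct and follows essentially the same approach as the paper: both test the boundedness on $\vec{f}=V^{1/q}\vec{e}\,\chi_Q$, derive the same pointwise lower bound and key averaged inequality, and then shrink $Q$ to a Lebesgue point using the sign of the exponent $\alpha/d+1/q-1/p$. Your version is in fact slightly more careful than the paper's, since you justify $\psi_\theta(Q)\to 1$ via the critical-radius inequality and handle the simultaneous Lebesgue-point issue for all $\vec{e}$ by passing through a countable dense subset of the unit sphere.
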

\begin{proof}
Fix a cube $Q$ and a vector $\vec{e}$. Set $\vec{f}(y)=V(y)^{1/q}\vec{e}\chi_Q(y)$. From the definition of $M_{U,V,\psi,\alpha}$, for $x\in Q$,
\begin{equation*}
M_{U,V,\psi_\theta,\alpha}\vec{f}(x)\geq\frac{1}{(\psi_\theta(Q)|Q|)^{1-\alpha/d}}\int_Q|U(x)^{1/q}\vec{e}|dy
=\frac{|Q|^{\alpha/d}}{\psi_\theta(Q)^{1-\alpha/d}}|U(x)^{1/q}\vec{e}|.
\end{equation*}
Then the $L^p\rightarrow L^q$ boundedness of $M_{U,V,\psi_\theta,\alpha}$ yields that
\begin{align*}
&\Big(\frac{|Q|^{\alpha q/d}}{\psi_\theta(Q)^{q(1-\alpha/d)}}\int_Q|U(x)^{1/q}\vec{e}|^qdx\Big)^{1/q}\\
&\quad\leq\|M_{U,V,\psi,\alpha}\vec{f}\|_{L^q}\\
&\quad\lesssim\|\vec{f}\|_{L^p}=\Big(\int_Q|V(x)^{1/q}\vec{e}|^pdx\Big)^{1/p}.
\end{align*}
That is
\begin{equation}\label{key for U}
\Big(\frac{1}{|Q|}\int_Q|U(x)^{1/q}\vec{e}|^q\Big)^{1/q}\lesssim|Q|^{1/p-1/q-\alpha/d}
\psi_\theta(Q)^{1-\alpha/d}\Big(\frac{1}{|Q|}\int_Q|V(x)^{1/q}\vec{e}|^p\Big)^{1/p}.
\end{equation}

Let $x_0$ be any Lebesgue point of $|U(x)^{1/q}\vec{e}|^q$ and $|V(x)^{1/q}\vec{e}|^p$. Let $Q_k$ be a sequence of cubes that centered at $x_0$ and shrink to $x_0$. Combining $1/p-1/q-\alpha/d>0$, $1-\alpha/d>0$ and Lebesgue differentiation theorem, the right-hand side of \eqref{key for U} tends to 0. So $|U(x_0)^{1/q}\vec{e}|^q=0$ for any $\vec{e}$. Therefore $U(x_0)=0$.
\end{proof}

\begin{proof}[Proof of Theorem \ref{two weight inequality for fractional maximal operators associated with operators}]
Fix a dyadic grid $\mathcal{D}$. In the proof of Theorem \ref{matrix weight bump condition classical fractional maximal}, it was essentially proved that
\begin{equation*}
\|M_{U,V,\alpha}^{\mathcal{D}}\|_{L^p\rightarrow L^q}\lesssim[U,V]_{p,q,\Phi}^{\mathcal{D}},
\end{equation*}
provided that $[U,V]_{p,q,\Phi}^{\mathcal{D}}<\infty$,
where
\begin{equation*}
M_{U,V,\alpha}^{\mathcal{D}}\vec{f}(x):=\underset{Q\in\mathcal{D}}{\sup_{Q\ni x}}\frac{1}{|Q|^{1-\alpha/d}}\int_Q|U(x)^{1/q}V(y)^{-1/q}\vec{f}(y)|dy
\end{equation*}
and
\begin{equation*}
[U,V]_{p,q,\Phi}^{\mathcal{D}}:=\sup_{Q\in\mathcal{D}}|Q|^{\alpha/d+1/q-1/p}
\Big(-\!\!\!\!\!\!\!\int_Q\|U(x)^{1/q}V^{-1/q}\|_{\Phi,Q}^qdx\Big)^{1/q},~\bar{\Phi}\in B_{p,q}.
\end{equation*}
Denote
\begin{equation*}
[U,V]_{p,q,\alpha,\widetilde{\psi}_{\theta'},\Phi}:=\sup_{Q}\frac{|Q|^{\alpha/d+1/q-1/p}}
{\widetilde{\psi}_{\theta'}(Q)^{1-\alpha/d}}
\Big(-\!\!\!\!\!\!\!\int_Q\|U(x)^{1/q}V^{-1/q}\|_{\Phi,Q}^qdx\Big)^{1/q}.
\end{equation*}
For any $Q\in\mathcal{Q}_r$, one can check that
\begin{equation*}
|Q|^{\alpha/d+1/q-1/p}
\Big(-\!\!\!\!\!\!\!\int_Q\|U(x)^{1/q}V^{-1/p}\|_{\Phi,Q}^qdx\Big)^{1/q}\leq2^{r\theta'(1-\alpha/d)}
[U,V]_{p,q,\alpha,\widetilde{\psi}_{\theta'},\Phi},
\end{equation*}
which implies that
\begin{equation*}
[U,V]_{p,q,\Phi}^{\mathcal{Q}_r}\leq2^{r\theta'(1-\alpha/d)}
[U,V]_{p,q,\alpha,\widetilde{\psi}_{\theta'},\Phi},
\end{equation*}
where
\begin{equation*}
[U,V]_{p,q,\Phi}^{\mathcal{Q}_r}:=\sup_{Q\in\mathcal{Q}_r}|Q|^{\alpha/d+1/q-1/p}
\Big(-\!\!\!\!\!\!\!\int_Q\|U(x)^{1/q}V^{-1/p}\|_{\Phi,Q}^qdx\Big)^{1/q}.
\end{equation*}
Mimic the scheme in the proof of Theorem \ref{matrix weight bump condition classical fractional maximal} and using $\theta>\theta'$, we have
\begin{align*}
\|M_{U,V,\widetilde{\psi}_\theta,\alpha}^{\mathcal{D}}\vec{f}\|_{L^q}&\leq\sum_{r\geq0}
2^{-r\theta(1-\alpha/d)}
\Big\|\sup_{Q\in\mathcal{Q}_r}\frac{1}{|Q|^{1-\alpha/d}}\int_Q|U(x)^{1/q}V(y)^{-1/q}\vec{f}(y)|dy
\Big\|_{L^q}\\
&\lesssim\sum_{r\geq0}2^{-r\theta(1-\alpha/d)}[U,V]_{p,q,\Phi}^{\mathcal{Q}_r}\|\vec{f}\|_{L^p}\\
&\lesssim[U,V]_{p,q,\alpha,\psi_{\theta'},\Phi}\|\vec{f}\|_{L^p}.
\end{align*}
\end{proof}

Finally, we prove Theorem \ref{boundedness auxiliary fractional maximal operator}. We first establish the following necessary lemma.
\begin{lemma}\label{distribution decomp}
Let $1/p-1/q=\alpha/d$ and $(U,V)\in \mathcal{A}_{p,q}^{\rho,\theta/3}$. Then for each $\lambda>0$, there is a disjoint collection of maximal dyadic cubes $\{Q_j\}$ such that
\begin{equation*}
E_\lambda:=\big\{x\in\mathbb{R}^d:M_{\mathcal{U},V,\widetilde{\psi}_\theta,\alpha}^{\mathcal{D}}\vec{f}(x)>
\lambda\big\}=\bigcup_jQ_j
\end{equation*}
and for each $j$,
\begin{equation*}
\lambda<\frac{1}{(\widetilde{\psi}_\theta(Q_j)|Q_j|)^{1-\alpha/d}}
\int_{Q_j}|\mathcal{U}_{Q_j}^{q,q}V(y)^{-1/q}\vec{f}(y)|dy.
\end{equation*}
\end{lemma}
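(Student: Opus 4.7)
\medskip
\noindent\textbf{Proof proposal.} The plan is to perform a standard Calder\'on--Zygmund-type stopping-time decomposition on the level set of the dyadic maximal operator $M_{\mathcal{U},V,\widetilde{\psi}_\theta,\alpha}^{\mathcal{D}}$. By definition of the supremum, every $x\in E_\lambda$ lies in at least one dyadic cube $Q\ni x$ for which
\begin{equation*}
\frac{1}{(\widetilde{\psi}_\theta(Q)|Q|)^{1-\alpha/d}}\int_Q|\mathcal{U}_Q^{q,q}V(y)^{-1/q}\vec{f}(y)|\,dy>\lambda.
\end{equation*}
Provided the collection of such cubes $Q$ has bounded side-length, one can then select, for each $x\in E_\lambda$, the (unique) maximal dyadic cube $Q(x)\ni x$ satisfying the inequality. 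Since distinct maximal dyadic cubes must be disjoint, enumerating these gives the desired family $\{Q_j\}$ whose union is $E_\lambda$ and on each of which the reverse average bound holds by construction. Thus the only substantive point to verify is the boundedness of side-lengths, i.e.\ that the left-hand side tends to $0$ as $|Q|\to\infty$.

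For this we combine H\"older's inequality in $y$ with the reducing-operator machinery. Writing $\mathcal{V}_Q^{-1/q,p'}$ for the reducing operator associated to $V^{-1/q}$ and the Young function $t\mapsto t^{p'}$, we have by H\"older
\begin{equation*}
\int_Q|\mathcal{U}_Q^{q,q}V(y)^{-1/q}\vec{f}(y)|\,dy\leq
\Big(\int_Q|\mathcal{U}_Q^{q,q}V(y)^{-1/q}|_{op}^{p'}\,dy\Big)^{1/p'}\|\vec{f}\|_{L^p},
\end{equation*}
and by Lemma~\ref{reducing operator} (applied, after using $|AB|_{op}=|B^*A^*|_{op}$, to $V^{-1/q}$ on the outside and $U^{1/q}$ on the inside)
\begin{equation*}
\Big(\tfrac{1}{|Q|}\int_Q|\mathcal{U}_Q^{q,q}V(y)^{-1/q}|_{op}^{p'}\,dy\Big)^{1/p'}\sim
|\mathcal{U}_Q^{q,q}\mathcal{V}_Q^{-1/q,p'}|_{op}.
\end{equation*}
The $\mathcal{A}_{p,q}^{\rho,\theta/3}$ condition together with the comparison $\psi_{\theta/3}(Q)\lesssim\widetilde{\psi}_\theta(Q)$ from Lemma~\ref{realtionship} then yields
\begin{equation*}
|\mathcal{U}_Q^{q,q}\mathcal{V}_Q^{-1/q,p'}|_{op}^q\lesssim
[U,V]_{\mathcal{A}_{p,q}^{\rho,\theta/3}}\,\widetilde{\psi}_\theta(Q)^{1+q/p'}.
\end{equation*}
Substituting back and exploiting the scaling identity $1-\alpha/d=1/p'+1/q$, the powers of $\widetilde{\psi}_\theta(Q)$ cancel exactly, and we are left with a bound of the form $C\,[U,V]_{\mathcal{A}_{p,q}^{\rho,\theta/3}}^{1/q}|Q|^{-1/q}\|\vec{f}\|_{L^p}$, which vanishes as $|Q|\to\infty$.

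\medskip
\noindent\textbf{Main obstacle.} The chief subtlety, and the reason the argument is not a verbatim copy of the scalar case, is that the reducing operator $\mathcal{U}_Q^{q,q}$ inside the maximal operator itself depends on the cube $Q$ being averaged over. Consequently one cannot factor out any fixed matrix and directly invoke Lemma~\ref{reducing operator} in the simplest form; the identification of $\int_Q|\mathcal{U}_Q^{q,q}V(y)^{-1/q}|_{op}^{p'}\,dy$ with the $\mathcal{A}_{p,q}$-averaged quantity relies on the self-adjointness of the reducing operators together with the symmetric form of Lemma~\ref{reducing operator}. Beyond this, the bookkeeping with $\widetilde{\psi}_\theta$ versus $\psi_{\theta/3}$ is handled cleanly by Lemma~\ref{realtionship}, and the remainder is the standard selection of disjoint maximal dyadic cubes.
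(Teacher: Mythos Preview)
Your proposal is correct and follows essentially the same route as the paper: apply H\"older in $y$, identify the resulting average with $|\mathcal{U}_Q^{q,q}\widetilde{\mathcal{V}}_Q^{q,p'}|_{op}$ via Lemma~\ref{reducing operator}, control this by $[U,V]_{\mathcal{A}_{p,q}^{\rho,\theta/3}}^{1/q}\widetilde{\psi}_\theta(Q)^{1/q+1/p'}$ using \eqref{rao dong}, cancel the $\widetilde{\psi}_\theta$-powers via $1-\alpha/d=1/p'+1/q$, and conclude the $|Q|^{-1/q}$ decay that guarantees existence of maximal cubes. The only cosmetic difference is that the paper writes the H\"older step with the normalized $L^{p'}$ average directly rather than first isolating $\|\vec f\|_{L^p}$, and records the $E_\lambda=\bigcup_j Q_j$ inclusions explicitly; your ``main obstacle'' paragraph slightly overstates the difficulty, since the self-adjointness of reducing operators makes the application of Lemma~\ref{reducing operator} immediate without any $|AB|_{op}=|B^*A^*|_{op}$ detour.
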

\begin{proof}
Assume that $E_\lambda\neq\emptyset$, otherwise there is nothing to prove. Let $\bar{E}_\lambda$ be the family of dyadic cubes such that
\begin{equation*}
\lambda<\frac{1}{(\widetilde{\psi}_\theta(Q)|Q|)^{1-\alpha/d}}
\int_{Q}|\mathcal{U}_{Q}^{q,q}V(y)^{-1/q}\vec{f}(y)|dy.
\end{equation*}
It is easy to see that the set $\bar{E}_\lambda$ is nonempty. For any $Q\subset\bar{E}_\lambda$, by our assumption, H\"{o}lder's inequality, $\alpha/d-1/p=-1/q$, Lemma \ref{reducing operator} and \eqref{rao dong}, one can check that
\begin{align*}
&\frac{1}{(\widetilde{\psi}_\theta(Q)|Q|)^{1-\alpha/d}}
\int_{Q}|\mathcal{U}_{Q}^{q,q}V(y)^{-1/q}\vec{f}(y)|dy\\
&\quad\leq\frac{|Q|^{\alpha/d}}{\widetilde{\psi}_\theta(Q)^{1-\alpha/d}}
\Big(\frac{1}{|Q|}\int_Q|\mathcal{U}_{Q}^{q,q}V(y)^{-1/q}|_{op}^{p'}\Big)^{1/p'}
\Big(\frac{1}{|Q|}\int_Q|\vec{f}(y)|^pdy\Big)^{1/p}\\
&\quad\lesssim|Q|^{-1/q}\widetilde{\psi}_\theta(Q)^{\alpha/d-1}
|\mathcal{U}_{Q}^{q,q}\widetilde{\mathcal{V}}_{Q}^{q,p'}|_{op}\|\vec{f}\|_{L^p}\\
&\quad\lesssim|Q|^{-1/q}[U,V]_{\mathcal{A}_{p,q}^{\rho,\theta/3}}^{1/q}\|\vec{f}\|_{L^p}.
\end{align*}
It follows that
\begin{equation*}
\lambda<\frac{1}{(\widetilde{\psi}_\theta(Q)|Q|)^{1-\alpha/d}}
\int_{Q}|\mathcal{U}_{Q}^{q,q}V(y)^{-1/q}\vec{f}(y)|dy\rightarrow0
\end{equation*}
as $|Q|\rightarrow\infty$. Now denote the family of such maximal cubes by $\{Q_j\}$ (the subset of $\bar{E}_\lambda$). It is clear that these cubes are pairwise disjoint.

In the end, let us prove $E_\lambda=\bigcup_jQ_j$. Let $x\in E_\lambda$. There is a dyadic cube $Q\ni x$ such that
\begin{equation*}
\lambda<\frac{1}{(\widetilde{\psi}_\theta(Q)|Q|)^{1-\alpha/d}}
\int_{Q}|\mathcal{U}_{Q}^{q,q}V(y)^{-1/q}\vec{f}(y)|dy.
\end{equation*}
So $E_\lambda\subset Q_j$ for some $j$.

On the other hand, let $x\in Q_j$. Since
\begin{equation*}
\lambda<\frac{1}{(\widetilde{\psi}_\theta(Q_j)|Q_j|)^{1-\alpha/d}}
\int_{Q_j}|\mathcal{U}_{Q_j}^{q,q}V(y)^{-1/q}\vec{f}(y)|dy.
\end{equation*}
Then $M_{\alpha,\tilde{\psi}_\theta,\mathcal{U},V}^{\mathcal{D}}\vec{f}(x)>\lambda$. That is, $\bigcup_jQ_j\subset E_\lambda$. Hence, $E_\lambda=\bigcup_jQ_j$.
\end{proof}

\begin{proof}[Proof of Theorem \ref{boundedness auxiliary fractional maximal operator}]
$(1).$ By $\mathcal{A}_Q^{\alpha,\theta}\vec{f}\leq M_{U,V,\psi,\alpha}\vec{f}$, it suffices to prove the result for $M_{U,V,\psi,\alpha}$. Similar to the proof of Proposition \ref{sparse domination for fractional maximal operator}, it suffices to prove $M_{\alpha,\widetilde{\psi},\mathcal{U},V}^{\mathcal{D}}$ is bounded from $L^p$ to $L^{q,\infty}$, where
\begin{equation*}
M_{\alpha,\widetilde{\psi}_\theta,\mathcal{U},V}^{\mathcal{D}}\vec{f}(x):=
\underset{Q\ni x}{\sup_{Q\in\mathcal{D}}}
\frac{1}{(\widetilde{\psi}_\theta(Q)|Q|)^{1-\alpha/d}}\int_Q
|\mathcal{U}_Q^{q,q}V(y)^{-1/q}\vec{f}(y)|dy.
\end{equation*}
By Lemma \ref{reducing operator}, Lemma \ref{distribution decomp} and $1/p-1/q=\alpha/d$,
\begin{align*}
&|\{x\in\mathbb{R}^d:M_{\mathcal{U},V,\widetilde{\psi}_\theta,\alpha}^{\mathcal{D}}\vec{f}(x)>\lambda\}|\\
&\quad\leq\lambda^{-q}\sum_j|Q_j|\Big(\frac{1}{(\widetilde{\psi}_\theta(Q_j)|Q_j|)^{1-\alpha/d}}
\int_{Q_j}|\mathcal{U}_{Q_j}^{q,q}V(y)^{-1/q}\vec{f}(y)|dy\Big)^{q}\\
&\quad\leq\lambda^{-q}\sum_j\widetilde{\psi}_\theta(Q_j)^{q\alpha/d-q}
\Big(-\!\!\!\!\!\!\!\int_{Q_j}|\mathcal{U}_{Q_j}^{q,q}V(y)^{-1/q}|_{op}^{p'}\Big)^{q/p'}
\Big(\int_{Q_j}|\vec{f}(y)|^pdy\Big)^{q/p}\\
&\quad\sim\lambda^{-q}\sum_j\widetilde{\psi}_\theta(Q_j)^{q\alpha/d-q}
|\mathcal{U}_{Q_j}^{q,q}\widetilde{\mathcal{V}}_{Q_j}^{q,p'}|_{op}^q
\Big(\int_{Q_j}|\vec{f}(y)|^pdy\Big)^{q/p}\\
&\quad\lesssim\lambda^{-q}[U,V]_{\mathcal{A}_{p,q}^{\rho,\theta/3}}\|\vec{f}\|_{L^p}^q,
\end{align*}
where we use $p\leq q$ and the cubes $\{Q_j\}_j$ are disjoint in the last inequality.

$(2).$ We only prove the conclusion on the case $p>1$, since the case $p=1$ is almost the same, except that we take the operator norm of the matrix and use Lemma \ref{reducing operator} instead of using duality to obtain the $L^{p'}$ norm. Note that for any $\vec{e}\in\mathbb{C}^n$,
\begin{equation*}
\|\chi_Q\vec{e}\|_{L^{q,\infty}}=|Q|^{1/q}|\vec{e}|.
\end{equation*}
This, together with duality, we have
\begin{align*}
&\sup_Q\sup_{\|\vec{f}\|_{L^p}=1}\Big\|\chi_Q(\psi_\theta(Q)|Q|)^{\alpha/d-1}
\int_Q\mathcal{U}_Q^{q,q}V(y)^{-1/q}\vec{f}(y)dy\Big\|_{L^{q,\infty}}\\
&\quad=\sup_Q\sup_{\|\vec{f}\|_{L^p}=1}\Big||Q|^{\alpha/d+1/q}\psi_\theta(Q)^{\alpha/d-1}
\int_Q\mathcal{U}_Q^{q,q}V(y)^{-1/q}\vec{f}(y)dy\Big|\\
&\quad=\sup_Q\sup_{\|\vec{f}\|_{L^p}=1}\sup_{|\vec{e}|=1}|Q|^{\alpha/d+1/q}\psi_\theta(Q)^{\alpha/d-1}
-\!\!\!\!\!\!\!\int_{Q}\langle\mathcal{U}_Q^{q,q}V(y)^{-1/q}\vec{f}(y),\vec{e}\rangle_{\mathbb{C}^n}dy\\
&\quad=\sup_Q\sup_{|\vec{e}|=1}\sup_{\|\vec{f}\|_{L^p}=1}|Q|^{\alpha/d+1/q-1}\psi_\theta(Q)^{\alpha/d-1}
\int_{\mathbb{R}^d}\langle\vec{f}(y),\chi_QV(y)^{-1/q}\mathcal{U}_Q^{q,q}\vec{e}\rangle_{\mathbb{C}^n}dy\\
&\quad=\sup_Q\sup_{|\vec{e}|=1}|Q|^{\alpha/d+1/q-1}\psi_\theta(Q)^{\alpha/d-1}
\|\chi_QV(y)^{-1/q}\mathcal{U}_Q^{q,q}\vec{e}\|_{L^{p'}}\\
&\quad=\sup_Q\sup_{|\vec{e}|=1}\psi_\theta(Q)^{\alpha/d-1}
\Big(\frac{1}{|Q|}\int_Q|V(y)^{-1/q}\mathcal{U}_{Q}^{q,q}\vec{e}|^{p'}dy\Big)^{1/p'}\\
&\quad\sim\sup_Q\psi_\theta(Q)^{\alpha/d-1}|\widetilde{\mathcal{V}}_{Q}^{q,p'}
\mathcal{U}_Q^{q,q}|_{op}
\sim[U,V]_{\mathcal{A}_{p,q}^{\rho,\theta}}^{1/q}.
\end{align*}
Then $(2)$ follows immediately by the assumption $\mathcal{A}_{Q}^{\alpha,\theta}:L^p\rightarrow L^{q,\infty}$ and $\mathcal{A}_Q^{\alpha,\theta}\vec{f}\leq M_{U,V,\psi_\theta,\alpha}\vec{f}$.
\end{proof}

\section{A characterization of matrix weights}
In this section, we prove Theorem \ref{character weight via average operator}. The idea origins from \cite{CrIsMo}, however, due to the additional critical radius function factor in the definitions of average operator associated with critical radius function, we get narrow ranges of $p,q$.
\begin{proof}[Proof of Theorem \ref{character weight via average operator}]
$(1)\Rightarrow(2)$: \textbf{Case 1. $p>1$:}\quad By H\"{o}lder's inequality, $(U,V)\in \mathcal{A}_{p,q}^{\rho,\theta}$ and $1+q/p'=q-q\alpha/d$, we have
\begin{align*}
\|A_{Q}^{\alpha,\theta}\|_{L^q(U)}^q&=\int_{\mathbb{R}^d}
\Big|\frac{1}{(\psi_\theta(Q)|Q|)^{1-\alpha/d}}
\int_Q\chi_Q(x)U(x)^{1/q}V(y)^{-1/q}V(y)^{1/q}\vec{f}(y)dy\Big|^qdx\\
&\leq\int_Q\frac{|Q|^{\alpha q/d}}{\psi_\theta(Q)^{q-q\alpha/d}}
\Big(-\!\!\!\!\!\!\!\int_Q|U(x)^{1/q}V(y)^{-1/q}|_{op}^{p'}dy\Big)^{q/p'}\\
&\qquad\times\Big(-\!\!\!\!\!\!\!\int_Q|V(y)^{1/q}\vec{f}(y)|^pdy\Big)^{q/p}dx\\
&=\frac{1}{\psi_\theta(Q)|Q|}\int_Q\Big(\frac{1}{\psi_\theta(Q)|Q|}
\int_Q|U(x)^{1/q}V(y)^{-1/q}|_{op}^{p'}dy\Big)^{q/p'}dx\\
&\qquad\times\Big(\int_Q|V(y)^{1/q}\vec{f}(y)|^pdy\Big)^{q/p}\\
&\leq[U,V]_{\mathcal{A}_{p,q}^{\rho,\theta}}\|\vec{f}\|_{L^p(V^{p/q})}^q.
\end{align*}
\textbf{Case 2. $p=1$:}\quad With the aid of Minkowski's inequality and $1-\alpha/d=1/q$,
\begin{align*}
\|A_{Q}^{\alpha,\theta}\|_{L^q(U)}&\leq\int_Q\Big(\frac{1}{\psi_\theta(Q)|Q|}
\int_Q|U(x)^{1/q}V(y)^{-1/q}|_{op}^qdx\Big)^{1/q}|V(y)^{1/q}\vec{f}(y)|dy\\
&\leq[U,V]_{\mathcal{A}_{1,q}^{\alpha,\theta}}^{1/q}\int_Q|V(y)^{1/q}\vec{f}(y)|dy\leq
[U,V]_{\mathcal{A}_{1,q}^{\alpha,\theta}}^{1/q}\|\vec{f}\|_{L^1(V^{1/q})}.
\end{align*}

$(2)\Rightarrow(1)$: \textbf{Case 1. $p>1$:}\quad Lemma \ref{reducing operator} and $1/q+1/p'=1-\alpha/d$ show that
\begin{equation*}
[U,V]_{\mathcal{A}_{p,q}^{\rho,\theta}}^{1/q}\sim\sup_Q\psi_\theta(Q)^{\alpha/d-1}
|\widetilde{\mathcal{V}}_Q^{q,p'}\mathcal{U}_{Q}^{q,q}|_{op}.
\end{equation*}
Hence, we only need to prove that
\begin{equation*}
\sup_Q\psi_\theta(Q)^{\alpha/d-1}
|\widetilde{\mathcal{V}}_Q^{q,p'}\mathcal{U}_{Q}^{q,q}|_{op}<\infty.
\end{equation*}
Now we fix a cube $Q$ and let $\vec{e}\in\mathbb{C}^n$ with $|\vec{e}|=1$. By duality, there exists $g\in L^p(V^{p/q})$ such that
\begin{align*}
|\widetilde{\mathcal{V}}_Q^{q,p'}\mathcal{U}_{Q}^{q,q}\vec{e}|&\sim
\Big(-\!\!\!\!\!\!\!\int_Q|V(y)^{-1/q}\mathcal{U}_{Q}^{q,q}\vec{e}|^{p'}dy\Big)^{1/p'}\\
&=|Q|^{-1/p'}\|\chi_Q\mathcal{U}_{Q}^{q,q}\vec{e}\|_{L^{p'}(V^{-p'/q})}\\
&=|Q|^{-1/p'}\int_Q\langle\mathcal{U}_{Q}^{q,q}\vec{e},\vec{g}(x)\rangle_{\mathbb{C}^n}dx\\
&=|Q|^{1/p}\Big\langle\vec{e},\mathcal{U}_{Q}^{q,q}-\!\!\!\!\!\!\!\int_Q\vec{g}(x)dx\Big
\rangle_{\mathbb{C}^n}\\
&\leq|Q|^{1/p}\Big|\mathcal{U}_{Q}^{q,q}-\!\!\!\!\!\!\!\int_Q\vec{g}(x)dx\Big|\\
&=|Q|^{1/p-\alpha/d}\psi_\theta(Q)^{1-\alpha/d}\Big|\mathcal{U}_{Q}^{q,q}
\frac{1}{(\psi_\theta(Q)|Q|)^{1-\alpha/d}}\int_Q\vec{g}(x)dx\Big|\\
&\sim|Q|^0\psi_\theta(Q)^{1-\alpha/d}\Big(\int_Q|U(y)^{1/q}
A_{Q}^{\alpha,\theta}\vec{g}(y)|^qdy\Big)^{1/q}\lesssim\psi_\theta(Q)^{1-\alpha/d}.
\end{align*}
Therefore, the desired result follows by arranging the above terms.

\textbf{Case 2. $p=1$:}\quad Fix a cube $Q$. Given any $S\subset Q$, where $|S|>0$. Let $\vec{f}=\chi_S(x)V(x)^{-1/q}\vec{e}$ with $\vec{e}\in\mathbb{C}^n$ and $|\vec{e}|=1$. By $(2)$ and $q(1-\alpha/d)=1$, we have
\begin{align*}
|S|\gtrsim\|A_{Q}^{\alpha,\theta}\vec{f}\|_{L^q(U)}&=\Big(\int_Q\Big|U(x)^{1/q}
\Big(\frac{1}{(\psi_\theta(Q)|Q|)^{1-\alpha/d}}\int_SV(y)^{-1/q}\vec{e}dy\Big)\Big|^qdx\Big)^{1/q}\\
&=|S|\psi_\theta(Q)^{\alpha/d-1}\Big(-\!\!\!\!\!\!\!\int_Q\Big|U(x)^{1/q}-\!\!\!\!\!\!\!\int_SV(y)^{-1/q}
\vec{e}dy\Big|^qdx\Big)^{1/q}.
\end{align*}
Thus,
\begin{equation*}
\psi_\theta(Q)^{\alpha/d-1}\Big|\mathcal{U}_{Q}^{q,q}\Big(-\!\!\!\!\!\!\!\int_SV(y)^{-1/q}\vec{e}dy\Big)
\Big|\lesssim1.
\end{equation*}
Applying the Lebesgue differentiation theorem, we arrive at
\begin{equation*}
\esss_{y\in Q}\psi_\theta(Q)^{\alpha/d-1}|\mathcal{U}_{Q}^{q,q}V(y)^{-1/q}|_{op}\lesssim1.
\end{equation*}
Hence, the desired conclusion follows by Lemma \ref{reducing operator}.
\end{proof}

\textbf{Conflict of Interest} The authors declare no conflict of interest.

\textbf{Acknowledgement}
The authors are greatly indebted to the referees for insightful and valuable suggestions which helped to remarkably improve the paper.


\end{document}